%
%
%

\documentclass{amsart}
\usepackage[latin1]{inputenc}
\usepackage{hyperref}
\usepackage{texdraw}
\usepackage{amssymb}
\usepackage{color}
\usepackage{graphicx}

\setcounter{tocdepth}{1}

\makeatletter

\newfont{\gothic}{eufm10 scaled 1100}

\theoremstyle{plain}
\newtheorem{thm}{Theorem}[section]
\numberwithin{figure}{section} 
\theoremstyle{plain}
\newtheorem{cor}[thm]{Corollary} 
\theoremstyle{plain}
\newtheorem{conj}[thm]{Conjecture} 
\theoremstyle{plain}
\theoremstyle{plain}
\newtheorem{lem}[thm]{Lemma} 
\theoremstyle{plain}
\newtheorem{prop}[thm]{Proposition} 
\theoremstyle{plain}
\newtheorem{Def}[thm]{Definition} 
\theoremstyle{remark}
\newtheorem{rem}[thm]{Remark}
\theoremstyle{remark}


\sloppy
\binoppenalty=10000
\relpenalty=10000

\usepackage[arrow,matrix,curve,ps]{xy}
\CompileMatrices

\makeatother

\begin{document}

\title{Topological equivalence of holomorphic foliation germs of rank $1$ with isolated singularity in the Poincar\'e domain}

\date{\today}

\author{Thomas Eckl and Michael Loenne}

\keywords{holomorphic foliation germs, isolated singularity, topological equivalence, Poincar\'e domain}

\subjclass{32S65, 58K45}


\address{Thomas Eckl, Department of Mathematical Sciences, The University of Liverpool, Mathematical
               Sciences Building, Liverpool, L69 7ZL, England, U.K.}

\email{thomas.eckl@liv.ac.uk}

\address{Michael Loenne, Mathematisches Institut, Universit{\"a}t Bayreuth, Universit{\"a}tsstra{\ss}e 30, 95447 Bayreuth,    
               Germany}

\email{Michael.Loenne@uni-bayreuth.de}


\maketitle

\begin{abstract}
We show that the topological equivalence class of holomorphic foliation germs with an isolated singularity of Poincar\'e type is determined by the topological equivalence class of the real intersection foliation of the (suitably normalized) foliation germ with a sphere centered in the singularity. We use this Reconstruction Theorem to completely classify topological equivalence classes of plane holomorphic foliation germs of Poincar\'e type and discuss a conjecture on the classification in dimension $\geq 3$.
\end{abstract}


\pagestyle{myheadings}
\markboth{THOMAS ECKL AND MICHAEL LOENNE}{Topological equivalence of holomorphic foliation germs of rank $1$}

\setcounter{section}{-1}

\section{Introduction}

\noindent As for isolated singularities of analytic set germs (see \cite{BK86} in the case of plane curve germs) a standard technique to study the topology of holomorphic foliation germs with isolated singularity looks at the intersection of their integral manifolds with spheres centered in the origin (see~\cite{LS11} for a more general Morse-theoretic approach). The technique was particularly successful when analyzing holomorphic foliation germs represented by vector fields, that is, foliation germs with $1$-dimensional leaves: Guckenheimer~\cite{Guc72} and Camacho, Kuiper and Palis~\cite{CKP78} (who use polycylinders instead of spheres) classify foliation germs represented by generic linearizable vector fields, whereas Camacho and Sad~\cite{CS82b} treat resonant cases of plane foliation germs represented by holomorphic vector fields of Siegel type.

\noindent Note that Ito \cite{Ito94} and Ito and Scardua \cite{IS05} investigate a kind of reverse situation: They show that a holomorphic vector field everywhere transversal to a sphere has exactly one singularity in the ball bounded by the sphere, but they do not further investigate the intersection foliation and its relation to the original holomorphic vector field.

\noindent In this paper we first prove a reconstruction theorem for holomorphic foliation germs represented by a vector field of Poincar\'e type, that is, the linear part of the vector field has eigenvalues whose convex hull in $\mathbb{C}$ does not contain $0 \in \mathbb{C}$: The topological equivalence class of such a holomorphic foliation germ is uniquely determined by the real-analytic foliation obtained on a sphere around the singularity when intersecting it with all the leaves of a holomorphically equivalent, normalized foliation germ. One direction of this topological equivalence was already observed by Arnol$'$d \cite{Arn69}; he showed that the intersection foliation is topologically equivalent to the cone foliation over the intersection foliation. For more details on the other direction see Thm.~\ref{topeq-intersec-fol-thm} and the preceding discussion in sections 1 and 2.

\noindent A similar reconstruction theorem for holomorphic foliation germs represented by vector fields of Siegel type (that is, not of Poincar\'e type and the linear part has only non-zero eigenvalues) seems possible. The main obstacles to prove such a theorem are missing normal forms and the fact that leaves of such foliation germs may not intersect spheres around the singularity transversally, but tangentially. However, in sufficiently normal situations the intersection of leaves and sphere still combine to a real-analytic foliation on the sphere, and the tangential locus is the polar variety of Lim\'on and Seade~\cite{LS11} with useful properties.

\noindent In sections 3 to 6 we use the Reconstruction Theorem~\ref{topeq-intersec-fol-thm} to completely classify topological equivalence classes of plane holomorphic foliation germs represented by vector fields of Poincar\'e type. Some of the cases in this classification are well-known, for example by Guckenheimer's Stability Theorem \cite{Guc72} on foliation germs given by vector fields whose linear part has $\mathbb{R}$-linearly independent eigenvalues. Nevertheless we describe the topology of the associated intersection foliations in these cases in full detail because this description is missing in the literature and may be useful for classification in higher dimensions.  In Section~\ref{higher-dim-sec} we speculate how to extend the $2$-dimensional picture and Guckenheimer's Stability to higher-dimensional foliation germs represented by vector fields of Poincar\'e type.

\noindent Recently, Mar\'{\i}n and Mattei presented a classification of topological equivalence classes of plane holomorphic foliation germs satisfying weak genericity assumptions \cite{MaMa12}, by exhibiting an invariant based on the reduction of plane holomorphic foliation singularities and the holonomy around irreducible exceptional components of the reduction. However this classification does not cover the resonant cases discussed in Section~\ref{resonant-sec} because these are not of generic general type, in the terminology of \cite{MaMa12} (see Rem.~\ref{not-GT-rem}). In any case, Mar\'{\i}n and Mattei give no explicit lists of topological equivalent foliation germs. 

\noindent Similarly, Ilyashenko and Yakovenko do not use their calculation of holonomy along the unique closed leaf of resonant foliation germs \cite[27.C]{IY08} to classify these germs.

\section{Preliminaries on holomorphic foliation germs of rank 1} \label{prelim-sec}

\noindent In this section we collect well-known definitions and and results on holomorphic foliations with as much details as necessary to fix notations and certain choices simplifying the later arguments. In essence, all of the following can be found in the monograph \cite{IY08}.

\begin{Def}
A germ of a holomorphic foliation of rank $1$ in $\mathbb{C}^n$ with an isolated singularity in $0 \in \mathbb{C}^n$ is an equivalence class of pairs $[U, \theta]$ where $U \subset \mathbb{C}^n$ is an open neighborhood of $0$ with holomorphic coordinates $z_1, \ldots, z_n$ and
\[ \theta = f_1 \frac{\partial}{\partial z_1} + \ldots + f_n \frac{\partial}{\partial z_n} \]
is a holomorphic vector field such that $f_1, \ldots, f_n \in \mathcal{O}(U)$ vanish simultaneously only in $0$.

\noindent Two such pairs $[U, \theta]$ and $[U^\prime, \theta^\prime]$ are equivalent if there exists an open neighborhood $V \subset U \cap U^\prime$ of $0 \in \mathbb{C}^n$ and a function $h \in \mathcal{O}^\ast(V)$ such that
\[ h \cdot \theta_{|V} = \theta^\prime_{|V}. \]
\end{Def}

\noindent We denote such holomorphic foliation germs by $\mathcal{F}$.

\noindent This definition is equivalent to other definitions of holomorphic foliation germs, see the discussion in \cite[I.2]{IY08}. In particular, if $[U, \theta]$ represents a holomorphic foliation germ $\mathcal{F}$ of rank 1 in $\mathbb{C}^n$ with an isolated singularity in $0 \in \mathbb{C}^n$ then for all $p \in U -\{0\}$ there exists an open neighborhood $V \subset U$ of $p$ such that the foliation restricted to $V$ is the \textit{standard holomorphic foliation} in suitable holomorphic coordinates $w_1, \ldots, w_n$ on $V$ centred in $p$, that is, 
\[ \theta(w_1) = \ldots = \theta(w_{n-1}) = 0. \]
Furthermore, the integral curves of $\theta$ in $U$ are covered by the local leaves or \textit{plaques} of these local standard holomorphic foliations given by the intersection of the level sets of $w_1, \ldots, w_{n-1}$. These integral curves are called the \textit{leaves} of $\mathcal{F}$.

\noindent We will consider the following topological equivalence relation on holomorphic foliation germs:

\begin{Def} \label{TopEq-def}
Two  holomorphic foliation germs $\mathcal{F}$, $\mathcal{F}^\prime$ of rank $1$ with an isolated singularity in  $0 \in \mathbb{C}^n$ and represented by $[U, \theta]$, $[U^\prime, \theta^\prime]$ are called topologically equivalent if there exists a homeomorphism $\phi: V \rightarrow V^\prime$ of open neighborhoods $V \subset U$, $V^\prime \subset U^\prime$ of $0$ such that $\phi(0) = 0$ and the leaves of $\mathcal{F}$ in $V$ are mapped onto the leaves of  $\mathcal{F}^\prime$ in $V^\prime$ by $\phi$.
\end{Def}

\noindent If $\phi$ is biholomorphic we say that $\mathcal{F}$ and $\mathcal{F}^\prime$ are \textit{holomorphically equivalent}.

\noindent We will focus on a special type of holomorphic foliation germs:

\begin{Def}
A holomorphic foliation germ of rank $1$ with an isolated singularity in  $0 \in \mathbb{C}^n$ represented by $[U, \theta]$  is said to be of Poincar\'e type if the eigenvalues $\lambda_1, \ldots, \lambda_n \in \mathbb{C}$ of the linear part
\[ A = \left( \frac{\partial f_j}{\partial z_i}(0) \right)_{i,j=1, \ldots, n} \]
of $\theta = \sum_{j=1}^n f_j \frac{\partial}{\partial z_j}$ generate a convex hull not containing $0 \in \mathbb{C}$. Then the tuple of eigenvalues $(\lambda_1, \ldots, \lambda_n) \in \mathbb{C}^n$ is said to be in the Poincar\'e domain.
\end{Def}

\noindent The classical theorems of Poincar\'e and Poincar\'e-Dulac (see~\cite[\S 24.D and E]{Arn83}) state that all  holomorphic foliation germs of rank $1$ with an isolated singularity in  $0 \in \mathbb{C}^n$ of Poincar\'e type are even holomorphically equivalent to such germs $\mathcal{F}$ of Poincar\'e type represented by an open subset $U \subset \mathbb{C}^n$ containing $0$ and a holomorphic vector field $\theta = \sum_{i=1}^n f_i(z) \frac{\partial}{\partial z_i}$ for which the following hold:
\begin{enumerate}
\item[(i)] In $U$, the $f_i(z)$ can be developed into powers series in the variables $z_1, \ldots, z_n$.
\item[(ii)] The linear part $ A = \left( \frac{\partial f_j}{\partial z_i}(0) \right)_{i,j=1, \ldots, n}$ of $\theta$ is in Jordan normal form.
\item[(iii)] If $\lambda_1, \ldots, \lambda_n$ are the eigenvalues of $A$ appearing with their algebraic multiplicity, then the non-vanishing monomials $z_1^{m_1} \cdots z_n^{m_n}$ in $f_i(z)$, with $m_i \in \mathbb{N}_0$, satisfy 
\[ \lambda_i = \sum_{j=1}^n m_j \lambda_j. \]
\end{enumerate}

\noindent Note that condition (ii) implies that the linear term in $f_i(z)$ has the form $\lambda_iz_i$ or $\lambda_iz_i +  z_{i+1}$. In the latter case $\lambda_i = \lambda_{i+1}$ by the properties of the Jordan normal form, hence all the non-vanishing monomials in  these linear terms satisfy condition (iii). For $m = (m_1, \ldots, m_n) \in \mathbb{N}^n_0$ a relation 
$\lambda_i = \sum_{j=1}^n m_j \lambda_j$ is called a \textit{resonance} of the eigenvalues $\lambda_1, \ldots, \lambda_n$, and the monomial $z^m := z_1^{m_1} \cdots z_n^{m_n}$ is called a \textit{resonant monomial} if it appears in $f_i(z)$.

\begin{rem} \label{PD-res-rem}
If  $(\lambda_1, \ldots, \lambda_n) \in \mathbb{C}^n$ is in the Poincar\'e domain then there are only finitely many resonances $\lambda_i = \sum_{j=1}^n m_j \lambda_j$. Furthermore, a resonance relation with $\lambda_i$ on the left hand side is either the \textit{trivial resonance relation} $\lambda_i = \lambda_i$, or $\lambda_i$ does not appear on the right hand side at all, that is $m_i=0$. For proofs, see~\cite[\S 24.B]{Arn83}. Note finally that we do not require $\sum_i m_i \geq 2$ as in \cite{Arn83} but only distinguish between the trivial resonant monomial $z_i$ in $f_i(z)$ and non-trivial resonant monomials.
\end{rem}

\begin{rem} \label{PD-est-rem}
Let $\mathcal{F}$ be a holomorphic foliation germ satisfying (i), (ii) and (iii). For the tuple $(\lambda_1, \ldots, \lambda_n) \in \mathbb{C}^n$ in the Poincar\'e domain there exists a maximal real constant $c > 0$ such that 
\[ |\sum_{i=1}^n \lambda_i t_i| \geq c \cdot \sum_{i=1}^n t_i, \]
for all real numbers $t_1, \ldots, t_n \geq 0$. The number $c$ can be interpreted as the distance of the convex hull of $\lambda_1, \ldots, \lambda_n$ in $\mathbb{C}$ from $0$. By separately rescaling the coordinates we can achieve that the entries of the matrix A on the superdiagonal are arbitrarily small. If the entries are $\frac{c}{2n}$ we will call $\mathcal{F}$ \textit{normalized} (see the next section).
\end{rem}

\begin{rem}  \label{dim2-fol-rem}
If $n = 2$ then every normalized holomorphic foliation germ of rank $1$ with an isolated singularity in  $0 \in \mathbb{C}^n$ of Poincar\'e type is represented by a vector field of one of the following types:
\begin{itemize}
\item[(1)] $\theta = \lambda z_1 \frac{\partial}{\partial z_1} + z_2 \frac{\partial}{\partial z_2}$, where $\lambda \in \mathbb{C} - \mathbb{R}$,
\item[(2)] $\theta = \lambda z_1 \frac{\partial}{\partial z_1} + z_2 \frac{\partial}{\partial z_2}$, where $\lambda \in \mathbb{R}_{>0}$,
\item[(3)] $\theta = (m z_1 + z_2^m) \frac{\partial}{\partial z_1} + z_2 \frac{\partial}{\partial z_2}$, where $m \geq 2$, or
\item[(4)] $\theta = (z_1 + \frac{1}{4}z_2) \frac{\partial}{\partial z_1} + z_2 \frac{\partial}{\partial z_2}$ because the constant $c$ of Rem.~\ref{PD-est-rem} is $1$ in this case.
\end{itemize}
\end{rem}

\section{The intersection foliation} \label{intsectfol-sec}

\noindent In this section  $S^{2n-1}_{\epsilon}$ denotes the (real) $2n-1$-dimensional sphere in $\mathbb{C}^n$ centered in $0 \in \mathbb{C}^n$ with radius $\epsilon$, and $B^{2n}_{\epsilon}$ denotes the (real) $2n$-dimensional ball in $\mathbb{C}^n$ centered in $0 \in \mathbb{C}^n$ with radius $\epsilon$.

\noindent $\mathcal{F}$ is always a normalized holomorphic foliation germ of rank $1$ with an isolated singularity in  $0 \in \mathbb{C}^n$ of Poincar\'e type, and $[U, \theta]$ represents $\mathcal{F}$, with $\theta = \sum_{i=1}^n f_i(z)\frac{\partial}{\partial z_i}$. Furthermore, $\lambda_1, \ldots, \lambda_n$ are the eigenvalues of the linear part of $\theta$ appearing with their algebraic multiplicity, and $c>0$ is the real constant for the tuple $(\lambda_1, \ldots, \lambda_n) \in \mathbb{C}^n$ in the Poincar\'e domain introduced in Rem.~\ref{PD-est-rem}. 

\noindent Arnol$'$d \cite[Thm.~5]{Arn69} observed that the leaves of holomorphic foliation germs $\mathcal{F}$ as above intersect spheres $S^{2n-1}_\epsilon$ with small enough radius $\epsilon$ everywhere transversally. In particular, in each point $p \in S^{2n-1}_\epsilon$ the (real) tangent spaces of the leaf of $\mathcal{F}$ through $p$ and of $ S^{2n-1}_{\epsilon}$ intersect in a (real) $1$-dimensional subspace. This yields a $1$-dimensional distribution on the real $\mathcal{C}^\infty$-manifold $ S^{2n-1}_{\epsilon}$ denoted by $\mathcal{F} \cap  S^{2n-1}_{\epsilon}$. This distribution is integrable because it is $1$-dimensional, see~\cite{War83}. Therefore we obtain a real foliation on $ S^{2n-1}_{\epsilon}$ with $1$-dimensional leaves , also denoted by $\mathcal{F} \cap  S^{2n-1}_{\epsilon}$ and called the \textit{real intersection foliation} or \textit{trace foliation} of $\mathcal{F}$ with $S^{2n-1}_{\epsilon}$. 

\noindent Its leaves can be canonically oriented: In each point $p \in S^{2n-1}_{\epsilon}$ choose those vectors in the tangent subspace given by the distribution $\mathcal{F} \cap  S^{2n-1}_{\epsilon}$ in $p$ which together with the tangent vectors of the leaf of $\mathcal{F}$ through $p$ pointing away from $0 \in \mathbb{C}^n$ represent the positive orientation of the complex structure on the leaf. Taking in each point $p \in S^{2n-1}_{\epsilon}$ a unit vector oriented in that way yields a nowhere vanishing vector field on $S^{2n-1}_{\epsilon}$ whose flow, denoted by $\Phi_{\mathcal{F}}$, has integral curves coinciding with the leaves of $\mathcal{F} \cap  S^{2n-1}_{\epsilon}$.

\begin{Def}
Two real $1$-dimensional foliations $\mathcal{F}, \mathcal{G}$ on the sphere $S^{2n-1}$ are called topologically equivalent if there exists a homeomorphism $\phi: S^{2n-1} \rightarrow S^{2n-1}$ mapping the leaves of $\mathcal{F}$ onto the leaves of $\mathcal{G}$.  
\end{Def}  

\noindent In \cite{Arn69}, Arnol$'$d continued to show that for $\epsilon$ small enough the foliation $\mathcal{F} \cap B^{2n-1}_\epsilon$ is topologically equivalent to the foliation of $B^{2n-1}_\epsilon$ by the cones over the leaves of $\mathcal{F} \cap S^{2n-1}_\epsilon$ with vertex $0 \in \mathbb{C}^n$, and this equivalence is realised by a homeomorphism of $B^{2n-1}_\epsilon$ on itself. This obviously implies that topological equivalence of $\mathcal{F}$ and $\mathcal{G}$ follows from topological equivalence of $\mathcal{F} \cap S^{2n-1}_\epsilon$ and $\mathcal{G} \cap S^{2n-1}_\epsilon$.

\noindent It seems to be a well-accepted fact that the converse is also true, but because of lack of reference (and in particular of a recipe to construct the topological equivalence of the intersection foliations) we decided to present this proof and construction in all details.

\begin{thm}[Reconstruction Theorem] \label{topeq-intersec-fol-thm}
Two normalized  holomorphic foliation germs $\mathcal{F}, \mathcal{G}$ with an isolated singularity in  $0 \in \mathbb{C}^n$ of Poincar\'e type are topologically equivalent if, and only if the real intersection foliations $\mathcal{F} \cap  S^{2n-1}_{\epsilon}$ and  $\mathcal{G} \cap  S^{2n-1}_{\epsilon}$, $0 < \epsilon \ll 1$, are topologically equivalent. 
\end{thm}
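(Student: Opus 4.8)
The plan is to prove the nontrivial implication: topological equivalence of the germs $\mathcal{F}, \mathcal{G}$ forces topological equivalence of the trace foliations $\mathcal{F} \cap S^{2n-1}_\epsilon$ and $\mathcal{G} \cap S^{2n-1}_\epsilon$. (The reverse implication is exactly the consequence of Arnol$'$d's cone theorem recalled just above the statement.) So suppose $\phi: V \to V'$ is a germ homeomorphism with $\phi(0)=0$ carrying leaves of $\mathcal{F}$ to leaves of $\mathcal{G}$. The sphere $S^{2n-1}_\epsilon$ is \emph{not} preserved by $\phi$, so the first task is to replace the two metric spheres by a single topological sphere that behaves well for both foliations. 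The key tool is again Arnol$'$d's result: for $\epsilon$ small, $\mathcal{F} \cap B^{2n}_\epsilon$ is topologically the cone over $\mathcal{F} \cap S^{2n-1}_\epsilon$. Thus every small enough topological sphere $\Sigma$ around $0$ that is transverse to the leaves of $\mathcal{F}$ (in the weak sense that each leaf meets it in a $1$-dimensional trace and the trace foliation on $\Sigma$ is again $1$-dimensional) carries a trace foliation topologically equivalent to $\mathcal{F} \cap S^{2n-1}_\epsilon$, via a homeomorphism obtained by pushing along the cone structure. I would first isolate this as a lemma: \emph{any two ``nice'' topological spheres around the singularity give topologically equivalent trace foliations}, using the cone structure to produce the equivalence (radial projection along cone rays, composed with the cone homeomorphism).

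Granting that lemma, the strategy is: choose $\epsilon' \ll \epsilon$ so small that $\phi(S^{2n-1}_{\epsilon'})$ lies inside $B^{2n}_\epsilon$ and still encloses $0$; set $\Sigma := \phi(S^{2n-1}_{\epsilon'})$. Since $\phi$ is a leaf-preserving homeomorphism, it restricts to a homeomorphism $S^{2n-1}_{\epsilon'} \to \Sigma$ carrying the trace foliation $\mathcal{F} \cap S^{2n-1}_{\epsilon'}$ to the trace foliation $\mathcal{G} \cap \Sigma$ (here ``trace foliation on $\Sigma$'' makes sense because $\Sigma$ is the homeomorphic image of an honest sphere along leaves, so leaves of $\mathcal{G}$ meet it in arcs inheriting the $1$-dimensional structure). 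Then the lemma gives $\mathcal{F} \cap S^{2n-1}_{\epsilon'} \simeq \mathcal{F} \cap S^{2n-1}_\epsilon$ and $\mathcal{G} \cap \Sigma \simeq \mathcal{G} \cap S^{2n-1}_\epsilon$, and chaining the three equivalences yields $\mathcal{F} \cap S^{2n-1}_\epsilon \simeq \mathcal{G} \cap S^{2n-1}_\epsilon$, as desired. The orientation of the leaves (the flow $\Phi_{\mathcal{F}}$) is not needed for mere topological equivalence, though one can check the constructed homeomorphisms respect it since $\phi$ maps leaves pointing away from $0$ to leaves pointing away from $0$ and is orientation-compatible on each complex leaf, or one simply drops that refinement.

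The main obstacle I anticipate is making rigorous the notion of ``trace foliation on an arbitrary small topological sphere $\Sigma$'' and that it is topologically equivalent to the trace foliation on a metric sphere. On a metric sphere transversality is a differential-geometric fact (Arnol$'$d's estimate via the constant $c$ of Rem.~\ref{PD-est-rem}: the radius is strictly monotone along leaves near $0$). On $\Sigma = \phi(S^{2n-1}_{\epsilon'})$ we only have a continuous picture, so I would \emph{not} try to prove $\Sigma$ is ``transverse'' in a smooth sense; instead I would characterize the trace purely topologically: each leaf $L$ of $\mathcal{G}$ is a punctured-disc-like object accumulating only at $0$, $\Sigma$ separates $0$ from the boundary, so $L \cap \Sigma$ is a compact $1$-manifold, and the collection of these is the foliation. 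The identification with $\mathcal{G} \cap S^{2n-1}_\epsilon$ comes from the fact that inside the cone structure given by Arnol$'$d, every leaf meets $\Sigma$ in exactly the ``same'' circle/arc pattern as it meets the standard sphere, because both are cross-sections of the same product-like structure $\mathcal{G}\cap B^{2n}_\epsilon \cong \mathrm{cone}(\mathcal{G}\cap S^{2n-1}_\epsilon)$. Concretely, the cone homeomorphism sends $\Sigma$ to some topological sphere $\Sigma_0$ in the model cone $B^{2n}_\epsilon \to [0,\epsilon] \times S^{2n-1}_\epsilon$, and radial projection $[0,\epsilon] \times S^{2n-1}_\epsilon \to \{\epsilon\} \times S^{2n-1}_\epsilon$ is leaf-preserving for the cone foliation and restricts to a homeomorphism on any sphere meeting every ray once; the only thing to verify is that $\Sigma$ (and $\Sigma_0$) does meet every cone ray exactly once, which follows from $\Sigma$ being the image of a metric sphere under a homeomorphism fixing $0$ and hence bounding a neighbourhood basis. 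I would devote the bulk of the proof to this cone-cross-section argument; everything else is formal chaining of homeomorphisms.
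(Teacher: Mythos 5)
Your outline of the easy direction and the overall strategy (use Arnol$'$d's cone structure, set $\Sigma := \phi(S^{2n-1}_{\epsilon'})$, compare $\mathcal{G}\cap\Sigma$ with $\mathcal{G}\cap S^{2n-1}_\epsilon$ by radial projection along the cone) is exactly the strategy the paper follows. The problem is the single sentence on which you rest the whole argument: that $\Sigma$ (equivalently, its image $\Sigma_0$ in the model cone) ``does meet every cone ray exactly once, which follows from $\Sigma$ being the image of a metric sphere under a homeomorphism fixing $0$ and hence bounding a neighbourhood basis.'' This implication is false. A topological sphere enclosing $0$ and bounding a ball meets every ray from $0$ \emph{at least} once (a separation/degree argument), but it can easily meet a ray several times: already in $\mathbb{R}^2$ one can take a Jordan curve around $0$ with a fold, and by Schoenflies it is the image of the unit circle under a homeomorphism of the plane fixing $0$, yet it crosses some rays three times. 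Bounding a neighbourhood basis is irrelevant here --- one can shrink the wiggly spheres radially and keep the folds. So your ``cross-section lemma'' as stated does not hold, and the radial projection $\Sigma_0 \to S^{2n-1}_\epsilon$ is only a continuous surjection, not a homeomorphism.

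This is precisely the point the paper flags explicitly (``$H_C$ may map $S^{2n-1}_{\epsilon^\prime}$ to a topological manifold \ldots intersecting the same radial line more than once, $h$ may not be injective''), and the bulk of the paper's proof is devoted to repairing it. The repair is not a local trick but a global reparametrization along the trace leaves: one lifts the flows $\Phi_{\widehat{\mathcal{F}}}$, $\Phi_{\widehat{\mathcal{G}}}$ to $S^{2n-1}\times\mathbb{R}$, shows the non-injective map $h$ satisfies $h(\Phi_{\widehat{\mathcal{F}}}(x,t)) = \Phi_{\widehat{\mathcal{G}}}(h(x),\tau(x,t))$ for a cocycle $\tau$ with $\tau(x,0)=0$, establishes periodicity/properness properties of $\tau$ (Claims 1--2 and equations (\ref{funct-tau-eq})--(\ref{lim-tau-eq})), and then monotonically regularizes $\tau$ first by running maxima ($\mu$) and then by averaging ($\sigma$) so that $g(x) := \Phi_{\widehat{\mathcal{G}}}(h(x),\sigma(x,0))$ is an honest leaf-preserving homeomorphism. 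In other words, one pushes sideways \emph{within} the trace leaves to undo the radial folds --- something pure radial projection cannot do. You anticipated that the cross-section claim was ``the only thing to verify,'' which is good instinct, but you cannot get it for free; as written the argument has a genuine gap exactly there, and filling it requires the flow-reparametrization machinery.
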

\begin{proof}
As discussed above Arnol$'$d showed that $\mathcal{F} \cap B^{2n-1}_\epsilon$ resp. $\mathcal{G} \cap B^{2n-1}_\epsilon$ is topologically equivalent to the foliation $\widehat{\mathcal{F}}$ resp. $\widehat{\mathcal{G}}$ of $B^{2n-1}_\epsilon$ by the cones over the leaves of $\mathcal{F} \cap  S^{2n-1}_{\epsilon}$ resp. $\mathcal{G} \cap  S^{2n-1}_{\epsilon}$ with vertex $0 \in \mathbb{C}^n$. As already noticed that shows one direction of the Theorem.

\noindent To prove that a topological equivalence of $\mathcal{F}$ and $\mathcal{G}$ induces a topological equivalence of the intersection foliations $\mathcal{F} \cap  S^{2n-1}_{\epsilon}$ and $\mathcal{G} \cap  S^{2n-1}_{\epsilon}$, it is enough to construct the latter one from a topological equivalence of the cone foliations $\widehat{\mathcal{F}}$ and $\widehat{\mathcal{G}}$, in the sense of Def.~\ref{TopEq-def}.  

\noindent By possibly decreasing $\epsilon$ to $\epsilon^\prime$  this topological equivalence can be realised by an embedding $H_C: B^{2n}_{\epsilon^\prime} \hookrightarrow B^{2n}_{\epsilon}$ such that $H_C(0) = 0$ and leaves of the foliation cone over $\widehat{\mathcal{F}} \cap S^{2n-1}_{\epsilon^\prime}$ are mapped into leaves  of the foliation cone over $\widehat{\mathcal{G}} \cap S^{2n-1}_{\epsilon}$. Composing $H_C$ with the radial projection $r: B^{2n}_{\epsilon} - \{0\} \rightarrow S^{2n-1}_{\epsilon}$ produces a continuous map
\[ h: S^{2n-1}_{\epsilon^\prime} \stackrel{H_C}{\hookrightarrow}  B^{2n}_{\epsilon} - \{0\} \stackrel{r}{\rightarrow} 
        S^{2n-1}_{\epsilon}  \]
Since $H_C$ may map $S^{2n-1}_{\epsilon^\prime}$ to a topological manifold in $ B^{2n}_{\epsilon}$ intersecting the same radial line more than once, $h$ may not be injective, and hence not the wanted homeomorphism. But
from $h$ we will be able to construct a homeomorphism $g:  S^{2n-1}_{\epsilon^\prime} \rightarrow  S^{2n-1}_{\epsilon}$ defining a topological equivalence of $\widehat{\mathcal{F}} \cap S^{2n-1}_{\epsilon^\prime}$ with $\widehat{\mathcal{G}} \cap S^{2n-1}_{\epsilon}$. This shows the theorem since the cone structure of $\widehat{\mathcal{F}}$ implies that $\widehat{\mathcal{F}} \cap S^{2n-1}_{\epsilon^\prime}$ is topologically equivalent to $\widehat{\mathcal{F}} \cap S^{2n-1}_{\epsilon}$, and by construction, $\widehat{\mathcal{F}} \cap S^{2n-1}_{\epsilon} = \mathcal{F} \cap S^{2n-1}_{\epsilon}$ and $\widehat{\mathcal{G}} \cap S^{2n-1}_{\epsilon} = \mathcal{G} \cap S^{2n-1}_{\epsilon}$.   

\noindent Let $L_{\widehat{\mathcal{F}},x}$ denote the leaf of $\widehat{\mathcal{F}} \cap S^{2n-1}_{\epsilon^\prime}$ through $x \in S^{2n-1}_{\epsilon^\prime}$, and $L_{\widehat{\mathcal{G}},y}$ the leaf of $\widehat{\mathcal{G}} \cap S^{2n-1}_{\epsilon}$ through $y \in S^{2n-1}_{\epsilon}$. Let $C_{\widehat{\mathcal{F}},x}$ denote the radial cone in $B^{2n}_{\epsilon^\prime}$ with vertex in $0$ over the leaf $L_{\widehat{\mathcal{F}},x} \subset S^{2n-1}_{\epsilon^\prime}$, and similarly $C_{\widehat{\mathcal{G}},y}$ the radial cone in $B^{2n}_{\epsilon}$ with vertex in $0$ over the leaf $L_{\widehat{\mathcal{G}},y} \subset S^{2n-1}_{\epsilon}$. 

\vspace{0.2cm}

\noindent \textit{Claim 1.} $h$ is a surjective map, and $h(L_{\widehat{\mathcal{F}},x}) = L_{\widehat{\mathcal{G}},h(x)}$ for each $x \in S^{2n-1}_{\epsilon^\prime}$.
\begin{proof}
There exists $\epsilon^{\prime\prime} \ll \epsilon$ such that $B^{2n}_{\epsilon^{\prime\prime}} \subset H_C(B^{2n}_{\epsilon^\prime})$, as $H_C$ is an embedding fixing $0$. Consequently, for every $y \in S^{2n-1}_{\epsilon}$, the segment $[y,0] \subset B^{2n}_{\epsilon}$ intersects $H_C(B^{2n}_{\epsilon^\prime})$ in a point $H_C(x)$, with $x \in S^{2n-1}_{\epsilon^\prime}$. Hence $h(x) = y$, and the surjectivity of $h$ is shown. 

\noindent The equality $L_{\widehat{\mathcal{G}},h(x)} = h(L_{\widehat{\mathcal{F}},x})$ follows from the fact that by definition, the topological equivalence $H_C$ maps $C_{\widehat{\mathcal{F}},x}$ bijectively onto $C_{\widehat{\mathcal{G}},h(x)} \cap H_C(B^{2n}_{\epsilon^\prime})$. 
\end{proof} 

\noindent We want to relate $h$ to the flows $\Phi_{\widehat{\mathcal{F}}}: S^{2n-1}_{\epsilon^\prime} \times \mathbb{R} \rightarrow S^{2n-1}_{\epsilon^\prime}$ and $\Phi_{\widehat{\mathcal{G}}}: S^{2n-1}_{\epsilon} \times \mathbb{R} \rightarrow S^{2n-1}_{\epsilon}$ whose integral curves are the leaves of the real intersection foliations $\widehat{\mathcal{F}} \cap S^{2n-1}_{\epsilon^\prime}$ and $\widehat{\mathcal{G}} \cap S^{2n-1}_{\epsilon}$. Note that in general, $\Phi_{\widehat{\mathcal{F}}}$ and $\Phi_{\widehat{\mathcal{G}}}$ will not commute with $h$, that is
\[ (h \circ \Phi_{\widehat{\mathcal{F}}})(x,t) \neq \Phi_{\widehat{\mathcal{G}}}(h(x),t). \]
To obtain the correct relation, we lift $\Phi_{\widehat{\mathcal{F}}}$ to a flow $\Phi_{\widetilde{\mathcal{F}}}$ on $S^{2n-1}_{\epsilon^\prime} \times \mathbb{R}$, by setting
\[ \Phi_{\widetilde{\mathcal{F}}}((x,t^\prime), t) := (\Phi_{\widehat{\mathcal{F}}}(x, t), t+t^\prime), x \in S^{2n-1}_{\epsilon^\prime},    
    t, t^\prime \in \mathbb{R}.  \]
The integral curves of $\Phi_{\widetilde{\mathcal{F}}}$ define a foliation $\widetilde{\mathcal{F}}$ on $S^{2n-1}_{\epsilon^\prime} \times \mathbb{R}$ whose leaves project onto the leaves of $\widehat{\mathcal{F}}$ on $S^{2n-1}_{\epsilon^\prime}$. Similarly, 
\[ \Phi_{\widetilde{\mathcal{G}}}((y,s^\prime), s) := (\Phi_{\widehat{\mathcal{G}}}(y, s), s+s^\prime), y \in S^{2n-1}_{\epsilon},    
    s, s^\prime \in \mathbb{R}  \]
defines a flow $\Phi_{\widetilde{\mathcal{G}}}$ and a foliation $\widetilde{\mathcal{G}}$ on $S^{2n-1}_{\epsilon} \times \mathbb{R}$ whose leaves project onto the leaves of $\widehat{\mathcal{G}}$ on $S^{2n-1}_{\epsilon}$.

\noindent Let $p_1, p_2$ resp.\ $q_1, q_2$ denote the projections from $S^{2n-1}_{\epsilon^\prime} \times \mathbb{R}$ resp.\  $S^{2n-1}_{\epsilon} \times \mathbb{R}$ to the first and second component. If $U \subset S^{2n-1}_{\epsilon^\prime}$ resp. $V \subset S^{2n-1}_{\epsilon}$ are foliation charts of $\widehat{\mathcal{F}}$ resp.\ $\widehat{\mathcal{G}}$, then $p_1^{-1}(U)$ resp.\ $q_1^{-1}(V)$ are foliation charts of $\widetilde{\mathcal{F}}$ resp. $\widetilde{\mathcal{G}}$. Consequently, $h: S^{2n-1}_{\epsilon^\prime} \rightarrow S^{2n-1}_{\epsilon}$ can be lifted exactly in one way to a continuous map
\[ \widetilde{H}: S^{2n-1}_{\epsilon^\prime} \times \mathbb{R}\rightarrow S^{2n-1}_{\epsilon} \times \mathbb{R} \]
such that $\widetilde{H}(x,0) = (h(x),0)$ for all $x \in S^{2n-1}_{\epsilon^\prime}$ and the leaves of $\widetilde{\mathcal{F}}$ are mapped into the leaves of $\widetilde{\mathcal{G}}$. In particular, $q_1 \circ \widetilde{H} = h \circ p_1$, and since $(\Phi_{\widehat{\mathcal{F}}}(x,t),t) = \Phi_{\widetilde{\mathcal{F}}}((x,0), t)$ the point $\widetilde{H}(\Phi_{\widehat{\mathcal{F}}}(x,t),t) \in S^{2n-1}_{\epsilon} \times \mathbb{R}$ must be in the same $\widetilde{G}$-leaf as $\widetilde{H}(x,0) = (h(x),0)$. Hence there is an $s \in \mathbb{R}$ such that
\[  \Phi_{\widetilde{\mathcal{G}}}((h(x),0), s) = \widetilde{H}(\Phi_{\widehat{\mathcal{F}}}(x,t),t), \]
and the defining equations of $\Phi_{\widetilde{\mathcal{G}}}$ and $\widetilde{H}$ imply that
\[ \Phi_{\widehat{\mathcal{G}}}((h(x),s), s) = (h(\Phi_{\widehat{\mathcal{F}}}(x,t)), (q_2 \circ \widetilde{H})(\Phi_{\widehat{\mathcal{F}}}(x,t),t)). \]
Setting $\tau := q_2 \circ \widetilde{H} \circ (\Phi_{\widehat{\mathcal{F}}} \times p_2): S^{2n-1}_{\epsilon^\prime} \times \mathbb{R}\rightarrow \mathbb{R}$ and comparing the second and the first components yield $s = \tau(x,t)$ and
\begin{equation} \label{def-tau-eq}
h(\Phi_{\widehat{\mathcal{F}}}(x,t)) = \Phi_{\widehat{\mathcal{G}}}((h(x), \tau(x,t)).
\end{equation}
This is the requested relation between $h$, $\Phi_{\widehat{\mathcal{F}}}$ and $\Phi_{\widehat{\mathcal{G}}}$.

\noindent By construction we have
\begin{equation}
\tau(x,0) = 0.
\end{equation}
To obtain further properties of $\tau$ we need to investigate the leaves $L_{\widehat{\mathcal{F}},x}$ and $L_{\widehat{\mathcal{G}},y}
$ of the real intersection foliations $\widehat{\mathcal{F}} \cap S^{2n-1}_{\epsilon^\prime}$ and $\widehat{\mathcal{G}} \cap S^{2n-1}_{\epsilon}$ in more details. First of all, we must carefully distinguish between the \textit{leaf topology} on $L_{\widehat{\mathcal{F}},x} = \Phi_{\widehat{\mathcal{F}}}(\{x\} \times \mathbb{R})$ and $L_{\widehat{\mathcal{G}},y} = \Phi_{\widehat{\mathcal{G}}}(\{y\} \times \mathbb{R})$ defined as the finest topology such that $\Phi_{\widehat{\mathcal{F}}|\{x\} \times \mathbb{R}}$ resp. $\Phi_{\widehat{\mathcal{G}}|\{y\} \times \mathbb{R}}$ are continuous, and the \textit{inclusion topology} induced by the inclusion in $S^{2n-1}_{\epsilon^\prime}$ resp. $S^{2n-1}_{\epsilon}$. The leaf topology is always finer than the inclusion topology, and the two topologies only coincide if the leaf is locally closed in $S^{2n-1}_{\epsilon^\prime}$ resp. $ S^{2n-1}_{\epsilon}$. If $L_{\widehat{\mathcal{F}},x}$ resp. $L_{\widehat{\mathcal{G}},y}$ are not bijective images of $\{x\} \times \mathbb{R}$ resp. $\{y\} \times \mathbb{R}$ under $\Phi_{\widehat{\mathcal{F}}}$ resp. $\Phi_{\widehat{\mathcal{G}}}$ then $\Phi_{\widehat{\mathcal{F}}|\{x\} \times \mathbb{R}}$ resp. $\Phi_{\widehat{\mathcal{G}}|\{y\} \times \mathbb{R}}$ are periodic maps, and the images $L_{\widehat{\mathcal{F}},x}$ resp. $L_{\widehat{\mathcal{G}},y}$ are compact both in leaf topology and inclusion topology. In particular, in that case $L_{\widehat{\mathcal{F}},x}$ resp. $L_{\widehat{\mathcal{G}},y}$ are embedded circles in $S^{2n-1}_{\epsilon^\prime}$ resp. $ S^{2n-1}_{\epsilon}$. Note also that $\Phi_{\widehat{\mathcal{F}}|\{x\} \times \mathbb{R}}$ and $\Phi_{\widehat{\mathcal{G}}|\{y\} \times \mathbb{R}}$ are universal coverings of $L_{\widehat{\mathcal{F}},x}$ and $L_{\widehat{\mathcal{G}},y}$ endowed with the leaf topology.

\vspace{0.2cm}

\noindent \textit{Claim 2.} The leaf $L_{\widehat{\mathcal{F}},x} \subset S^{2n-1}_{\epsilon^\prime}$ is an embedded circle if, and only if the leaf $L_{\widehat{\mathcal{G}},h(x)} \subset  S^{2n-1}_{\epsilon}$ is an embedded circle. 
\begin{proof}
Assume that $L_{\widehat{\mathcal{F}},x}$ is an embedded circle, hence compact. Since $h(L_{\widehat{\mathcal{F}},x}) = L_{\widehat{\mathcal{G}},h(x)}$ by Claim 1 and $h$ is continuous in the inclusion topology, $L_{\widehat{\mathcal{G}},h(x)}$ must be compact, hence closed. Then leaf and inclusion topology on $L_{\widehat{\mathcal{G}},h(x)}$ coincide, so 
$L_{\widehat{\mathcal{G}},h(x)}$ cannot be homeomorphic to $\mathbb{R}$ in leaf topology. Consequently, $L_{\widehat{\mathcal{G}},h(x)}$ is an embedded circle. 

\noindent On the other hand, if $L_{\widehat{\mathcal{G}},h(x)}$ is an embedded circle then the cone leaf $C_{\widehat{\mathcal{G}},h(x)}$ and hence the intersection $C_{\widehat{\mathcal{G}},h(x)} \cap H_C(S^{2n-1}_{\epsilon^\prime})$ is compact. But the topological equivalence $H_C^{-1}$ maps $C_{\widehat{\mathcal{G}},h(x)} \cap (H_C(S^{2n-1}_{\epsilon^\prime})$ onto $L_{\widehat{\mathcal{F}},x}$. So $L_{\widehat{\mathcal{F}},x}$ is compact in the inclusion topology, hence compact in the coinciding leaf topology, and hence an embedded circle, not a line. 
\end{proof}

\noindent Using (\ref{def-tau-eq}) and the functorial property of the flows $\Phi_{\widehat{\mathcal{F}}}$ and $\Phi_{\widehat{\mathcal{G}}}$ we calculate
\begin{eqnarray*}
\Phi_{\widehat{\mathcal{G}}}(h(x), \tau(x,t) + \tau(\Phi_{\widehat{\mathcal{F}}}(x,t),t^\prime)) & = & \Phi_{\widehat{\mathcal{G}}}(\Phi_{\widehat{\mathcal{G}}}(h(x), \tau(x,t)),  \tau(\Phi_{\widehat{\mathcal{F}}}(x,t),t^\prime)) = \\
 & = & \Phi_{\widehat{\mathcal{G}}}(h(\Phi_{\widehat{\mathcal{F}}}(x,t)), \tau(\Phi_{\widehat{\mathcal{F}}}(x,t),t^\prime)) = \\
 & = & h(\Phi_{\widehat{\mathcal{F}}}(\Phi_{\widehat{\mathcal{F}}}(x,t), t^\prime)) = h(\Phi_{\widehat{\mathcal{F}}}(x, t+t^\prime)) = \\
 & = & \Phi_{\widehat{\mathcal{G}}}(h(x), \tau(x, t + t^\prime)).  
\end{eqnarray*}
If $\Phi_{\widehat{\mathcal{G}}}(h(x), \cdot)$ is injective this implies
\begin{equation} \label{funct-tau-eq}
\tau(x, t+t^\prime) = \tau(x,t) + \tau(\Phi_{\widehat{\mathcal{F}}}(x,t),t^\prime).
\end{equation}

\noindent If $L_{\widehat{\mathcal{F}},x}$ and  $L_{\widehat{\mathcal{G}},h(x)}$ are embedded circles then $\Phi_{\widehat{\mathcal{F}}|\{x\} \times \mathbb{R}}$ and $\Phi_{\mathcal{G}|\{h(x)\} \times \mathbb{R}}$ are periodic maps with periods $T_{\widehat{\mathcal{F}},x}$ and $T_{\mathcal{G},h(x)}$. Consequently,
\[ \tau(x, t+t^\prime) = \tau(x,t) + \tau(\Phi_{\widehat{\mathcal{F}}}(x,t),t^\prime) + k(t,t^\prime) \cdot T_{\mathcal{G},h(x)}, \]
where $k(t,t^\prime)$ is an integer continuously depending on $t, t^\prime$, hence a constant $k$. Setting $t = t^\prime = 0$ we obtain $k = k(0,0) = 0$ and thus (\ref{funct-tau-eq}).

\noindent In this situation, $\tau(x, \cdot): \mathbb{R} \rightarrow \mathbb{R}$ is the lifting of $h: L_{\widehat{\mathcal{F}},x} \rightarrow L_{\widehat{\mathcal{G}}, h(x)}$ to the universal coverings of the leaves along the flows $\Phi_{\widehat{\mathcal{F}}}: \{x\} \times \mathbb{R} \rightarrow L_{\widehat{\mathcal{F}},x}$ and $\Phi_{\widehat{\mathcal{G}}}: \{h(x)\} \times \mathbb{R} \rightarrow L_{\widehat{\mathcal{G}}, h(x)}$. Since liftings preserve fibers of the coverings this implies
\[ \tau(x, t + T_{\widehat{\mathcal{F}},x}) = \tau(x,t) + l \cdot T_{\widehat{\mathcal{G}}, h(x)}. \]
Since $H_C(L_{\widehat{\mathcal{F}},x})$ is an embedded circle in $C_{\widehat{\mathcal{G}}, h(x)}$ with $0$ in its interior, $h: L_{\widehat{\mathcal{F}},x} \rightarrow L_{\widehat{\mathcal{G}}, h(x)}$ is homotopic to a homeomorphism. Since furthermore $H_C$ preserves orientation, we conclude $l = 1$ and obtain:
\begin{equation} \label{per-tau-eq}
\tau(x, t + T_{\widehat{\mathcal{F}},x}) = \tau(x,t) + T_{\widehat{\mathcal{G}}, h(x)}.
\end{equation}

\noindent As a last property of $\tau$ we show:
\begin{equation} \label{lim-tau-eq}
\lim_{t \rightarrow \infty} \tau(x,t) = \infty\ \mathrm{and}\ \lim_{t \rightarrow -\infty} \tau(x,t) =-\infty:
\end{equation}
If $L_{\widehat{\mathcal{F}},x}$ and  $L_{\widehat{\mathcal{G}},h(x)}$ are embedded circles, this follows from (\ref{per-tau-eq}). Otherwise, both $\Phi_{\widehat{\mathcal{F}}|\{x\} \times \mathbb{R}}$ and $\Phi_{\widehat{\mathcal{G}}|\{h(x)\} \times \mathbb{R}}$ are bijective. In that case, for all $y \in L_{\widehat{\mathcal{G}},h(x)}$ the set of $t \in \mathbb{R}$ such that
\[ y = h(\Phi_{\widehat{\mathcal{F}}}(x,t)) = \Phi_{\widehat{\mathcal{G}}}(h(x), \tau(x,t)) \]
is bounded because the intersection of the line segment $[y,0]$ with $H_C(L_{\widehat{\mathcal{F}},x})$ equals $[y,0] \cap H_C(S^{2n-1}_{\epsilon^\prime})$, hence is compact. On the other hand, $|\tau(x,t)|$ may be arbitarily large, as $h(L_{\widehat{\mathcal{F}},x}) = L_{\widehat{\mathcal{G}},h(x)}$. Both facts together contradict $\lim_{t \rightarrow \pm \infty} |\tau(x,t)| \neq \infty$. The signs are again as claimed because $H_C$ preserves orientation.

\vspace{0.2cm}

\noindent The aim is now to modify $\tau$ to a continuous map $\sigma: S^{2n-1}_{\epsilon^\prime} \times \mathbb{R} \rightarrow \mathbb{R}$ which is strictly increasing and surjective for fixed $x \in S^{2n-1}_{\epsilon^\prime}$ but still satisfies a functorial property analogous to (\ref{funct-tau-eq}). We use $\sigma$ to modify $h$ to a topological equivalence $g$ of $\widehat{\mathcal{F}} \cap S^{2n-1}_{\epsilon^\prime}$ and $\widehat{\mathcal{G}} \cap S^{2n-1}_{\epsilon}$. 

\noindent The modification of $\tau$ to $\sigma$ and hence from $h$ to $g$ is done in two steps: First, we cut off any "moving backwards" of the image of the leaf $L_{\widehat{\mathcal{F}},x}$ on the leaf $L_{\widehat{\mathcal{G}}, h(x)}$ by keeping the map stationary whenever such a backwards move starts. Then we smoothen the stationary intervals to obtain a bijective map. For the image $H_C(L_{\widehat{\mathcal{F}},x})$ in $C_{\widehat{\mathcal{G}},h(x)}$ these steps may locally be visualized as follows:

\begin{center}
\begin{picture}(340,70)(0,0) 

\put(0,56){\line(1,0){80}}
\multiput(0,56)(0,-16){4}{\line(0,-1){8}}
\multiput(80,56)(0,-16){4}{\line(0,-1){8}}
\put(0,16){\line(6,-1){48}}
\put(48,8){\line(1,3){8}}
\put(40,16){\line(1,1){16}}
\put(40,16){\line(-1,2){8}}
\put(32,32){\line(3,2){24}}
\put(56,48){\line(1,-1){24}}
\put(24,8){\rule{0.5pt}{4pt}}
\put(20,2){\tiny{$H_C(L_{\widehat{\mathcal{F}},x})$}}
\put(72,12){\line(1,0){12}}
\put(84,10){\tiny{$C_{\widehat{\mathcal{G}},h(x)}$}}
\put(64,56){\rule{0.5pt}{4pt}}
\put(60,62){\tiny{$L_{\widehat{\mathcal{G}}, h(x)}$}}

\put(90,25){$\xrightarrow{\hspace*{0.6cm}}$}

\put(120,56){\line(1,0){80}}
\multiput(120,56)(0,-16){4}{\line(0,-1){8}}
\multiput(200,56)(0,-16){4}{\line(0,-1){8}}
\put(120,16){\line(6,-1){48}}
\put(168,8){\line(1,3){8}}
\put(176,16){\line(0,1){32}}
\put(176,48){\line(1,-1){24}}

\put(210,25){$\xrightarrow{\hspace*{0.6cm}}$}

\put(240,56){\line(1,0){80}}
\multiput(240,56)(0,-16){4}{\line(0,-1){8}}
\multiput(320,56)(0,-16){4}{\line(0,-1){8}}
\put(240,16){\line(6,-1){48}}
\put(288,8){\line(1,6){4}}
\put(292,32){\line(1,-4){4}}
\put(296,16){\line(1,4){8}}
\put(304,48){\line(2,-3){16}}

\end{picture}
\end{center}

\vspace{0.5cm}

\noindent Continuity of $\tau$ and (\ref{lim-tau-eq}) imply that $\mu(x,t) := \max_{t^\prime \leq t} \{\tau(x,t^\prime)\}$ defines a continuous function $\mu: S^{2n-1}_{\epsilon^\prime} \times \mathbb{R} \rightarrow \mathbb{R}$ which is surjective and increasing for fixed $x$. It holds that
\begin{eqnarray}
   \mu(x, t+t^\prime) & = & \max_{t^{\prime\prime} \leq t+t^\prime} \{\tau(x,t^{\prime\prime})\} = 
   \max_{t^{\prime\prime\prime} \leq t^\prime} \{\tau(x,t^{\prime\prime\prime}+t) \} =  \notag \\ \label{funct-mu-eq}
    & = & \max_{t^{\prime\prime\prime} \leq t^\prime} \{\tau(\Phi_{\widehat{\widehat{\mathcal{F}}}}(x,t),t^{\prime\prime\prime}) + \tau(x,t) \} = \\                  
    & = & \mu(\Phi_{\widehat{\mathcal{F}}}(x,t),t^\prime) + \tau(x,t). \notag 
\end{eqnarray}

\noindent $\mu(x, \cdot)$ is not necessarily strictly increasing. To modify $\mu$ to a strictly increasing function without destroying (\ref{funct-mu-eq}) we introduce the growth function
\[ \gamma_{\delta}(x,t) := \min_{t < t^\prime} \{ t^\prime: \tau(x, t^\prime) = \tau(x,t) + \delta \} - t > 0, \]
for a fixed $\delta >0$. It is continuous on $S^{2n-1}_{\epsilon^\prime} \times \mathbb{R}$, hence averaging $\mu$ by $\gamma_\delta$ leads to the continuous function
\[ \sigma(x,t) := \frac{1}{\gamma_\delta(x,t)} \int_t^{t+\gamma_\delta(x,t)} \mu(x,t^\prime) dt^\prime \]
which is strictly increasing and surjective onto $\mathbb{R}$ for fixed $x$, hence continuously invertible. Using (\ref{funct-tau-eq}) we see that $\gamma_\delta(x, t+t^\prime) = \gamma_\delta(\Phi_{\widehat{\mathcal{F}}}(x,t),t^\prime)$ and together with (\ref{funct-mu-eq}) this implies
\begin{equation} \label{funct-sig-eq}
\sigma(x,t+t^\prime) = \sigma(\Phi_{\widehat{\mathcal{F}}}(x,t), t^\prime) + \tau(x,t). 
\end{equation}

\vspace{0.2cm}

\noindent \textit{Claim 3.} The map $g: S^{2n-1}_{\epsilon^\prime} \rightarrow S^{2n-1}_{\epsilon}, x \mapsto \Phi_{\widehat{\mathcal{G}}}(h(x), \sigma(x,0))$ defines a homeomorphism inducing a topological equivalence of $\widehat{\mathcal{F}} \cap S^{2n-1}_{\epsilon^\prime}$ and $\widehat{\mathcal{G}} \cap S^{2n-1}_{\epsilon}$. 
\begin{proof}
If $\Phi_{\widehat{\mathcal{G}}}(h(x), \sigma(x,0)) = \Phi_{\widehat{\mathcal{G}}}(h(y), \sigma(y,0))$ then $h(y)$ is in the same $\widehat{\mathcal{G}}$-leaf as $h(x)$, hence $y$ is in the same $\widehat{\mathcal{F}}$-leaf as $x$, hence there is $t \in \mathbb{R}$ such that $y = \Phi_{\widehat{\mathcal{F}}}(x,t)$. Using (\ref{def-tau-eq}), (\ref{funct-sig-eq}) and the functorial properties of the flow $\Phi_{\widehat{\mathcal{G}}}$ we calculate
\begin{eqnarray*}
\Phi_{\widehat{\mathcal{G}}}(h(x), \sigma(x,0)) & = & \Phi_{\widehat{\mathcal{G}}}(h(y), \sigma(y,0)) = \Phi_{\widehat{\mathcal{G}}}(h(\Phi_{\widehat{\mathcal{F}}}(x,t)), \sigma(\Phi_{\widehat{\mathcal{F}}}(x,t),0)) = \\
 & = & \Phi_{\widehat{\mathcal{G}}}(\Phi_{\widehat{\mathcal{G}}}(h(x), \tau(x,t)), \sigma(\Phi_{\widehat{\mathcal{F}}}(x,t),0)) = \\
 & = & \Phi_{\widehat{\mathcal{G}}}(h(x), \tau(x,t) + \sigma(\Phi_{\widehat{\mathcal{F}}}(x,t),0)) = \\
 & = & \Phi_{\widehat{\mathcal{G}}}(h(x), \sigma(x,t)).
\end{eqnarray*}
If $\Phi_{\widehat{\mathcal{G}}|\{h(x)\} \times \mathbb{R}}$ is bijective, this implies $\sigma(x,0) = \sigma(x,t)$, hence $t=0$ by injectivity of $\sigma$ for fixed $x$, hence $y = \Phi_{\widehat{\mathcal{F}}}(x,0) = x$. If $\Phi_{\widehat{\mathcal{G}}|\{h(x)\} \times \mathbb{R}}$ is periodic with period $T_{\widehat{\mathcal{G}}, h(x)}$ and hence $\Phi_{\widehat{\mathcal{F}}|\{x\} \times \mathbb{R}}$ is periodic with period $T_{\widehat{\mathcal{F}}, x}$ then for some $k \in \mathbb{Z}$ we have 
\[ \sigma(x,t) = \sigma(x,0) + k \cdot T_{\widehat{\mathcal{G}}, h(x)} = \sigma(x,0) + k \cdot \tau(x, T_{\widehat{\mathcal{F}}, x}) = \sigma(x, k \cdot T_{\widehat{\mathcal{F}}, x}), \]
by (\ref{per-tau-eq}) and (\ref{funct-sig-eq}). Injectivity of $\sigma$ implies $t = k \cdot T_{\widehat{\mathcal{F}}, x}$, hence $y = \Phi_{\widehat{\mathcal{F}}}(x, k \cdot T_{\widehat{\mathcal{F}}, x}) = x$. So $g$ is injective. 

\noindent If $y \in S^{2n-1}_{\epsilon}$ then there exists $x \in S^{2n-1}_{\epsilon^\prime}$ such that $y = h(x)$, since $h$ is surjective. Then $y = \Phi_{\widehat{\mathcal{F}}}(h(x),0)$. Since $\sigma(x, \cdot)$ is surjective onto $\mathbb{R}$ there exists $t \in \mathbb{R}$
such that 
\begin{eqnarray*}
y = \Phi_{\widehat{\mathcal{G}}}(h(x), \sigma(x,t)) & = & \Phi_{\widehat{\mathcal{G}}}(h(x), \tau(x,t) + \sigma(\Phi_{\widehat{\mathcal{F}}}(x,t),0)) = \\
 & = & \Phi_{\widehat{\mathcal{G}}}(\Phi_{\widehat{\mathcal{G}}}(h(x), \tau(x,t)), \sigma(\Phi_{\widehat{\mathcal{F}}}(x,t),0)) = \\
 & = & \Phi_{\widehat{\mathcal{G}}}(h(\Phi_{\widehat{\mathcal{F}}}(x, t)), \sigma(\Phi_{\widehat{\mathcal{F}}}(x,t),0)) = \\
 & = & g(\Phi_{\widehat{\mathcal{F}}}(x,t)),
\end{eqnarray*} 
by (\ref{funct-sig-eq}), (\ref{def-tau-eq}) and the functorial property of the flow $\Phi_{\widehat{\mathcal{G}}}$. Hence $g$ is surjective.

\noindent As a bijective continuous map from a compact topological space to a Hausdorff space, $g$ is a homeomorphism. $g$ is also mapping leaves of $\widehat{\mathcal{F}} \cap S^{2n-1}_{\epsilon^\prime}$ to leaves of $\widehat{\mathcal{G}} \cap S^{2n-1}_{\epsilon}$, so $g$ is a topological equivalence of $\widehat{\mathcal{F}} \cap S^{2n-1}_{\epsilon^\prime}$ and $\widehat{\mathcal{G}} \cap S^{2n-1}_{\epsilon}$.
\end{proof}

\noindent This finishes the proof of the theorem.
\end{proof}

\section{The case of $\mathbb{R}$-linearly independent eigenvalues in dimension $2$} \label{nonreal-sec}

\noindent In this section, we only consider holomorphic foliation germs $\mathcal{F}$ around $0 \in \mathbb{C}^2$ represented by vector fields of the form 
\[ \lambda x \frac{\partial}{\partial x} + y\frac{\partial}{\partial y},\ \lambda \in \mathbb{C} - \mathbb{R}. \]
These foliation germs are invariant under the maps $\mathbb{C}^2 \rightarrow \mathbb{C}^2, (x,y) \mapsto r(x,y)$ for all $r \in \mathbb{R}_{>0}$. Hence there is a real intersection foliation $\mathcal{F} \cap S^3_{\epsilon}$ for all $\epsilon \in \mathbb{R}_{>0}$ as in section~\ref{intsectfol-sec}, and we assume from now on $\epsilon = 1$.

\begin{lem} \label{S1xS1-inv-lem}
Let $S^1 \times S^1$ act on $S^3$ by $(x,y) \mapsto (xe^{it_1}, ye^{it_2})$. Then the intersection foliation $\mathcal{F} \cap S^3$ is invariant under this action.
\end{lem}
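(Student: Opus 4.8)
The plan is to show that for every $(t_1,t_2)$ the map $T:=T_{t_1,t_2}\colon\mathbb{C}^2\to\mathbb{C}^2$, $(x,y)\mapsto(xe^{it_1},ye^{it_2})$, is simultaneously an automorphism of the sphere $S^3$ and of the holomorphic foliation $\mathcal{F}$, and then to observe that any map with these two properties automatically preserves the intersection foliation $\mathcal{F}\cap S^3$, since the latter is constructed naturally out of $\mathcal{F}$ and $S^3$.

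That $T$ preserves $S^3=\{|x|^2+|y|^2=1\}$ is immediate from $|xe^{it_1}|^2+|ye^{it_2}|^2=|x|^2+|y|^2$. To see that $T$ preserves $\mathcal{F}$, I would compute the push-forward of $\theta=\lambda x\frac{\partial}{\partial x}+y\frac{\partial}{\partial y}$: writing $x'=xe^{it_1}$, $y'=ye^{it_2}$, one has $\frac{\partial}{\partial x}=e^{it_1}\frac{\partial}{\partial x'}$ and $\frac{\partial}{\partial y}=e^{it_2}\frac{\partial}{\partial y'}$, so that $T_*\theta=\lambda x'\frac{\partial}{\partial x'}+y'\frac{\partial}{\partial y'}=\theta$. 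Hence $T$ is a biholomorphism of $\mathbb{C}^2$ fixing the origin and carrying the holomorphic line field $\mathbb{C}\cdot\theta$ onto itself; in particular it maps every leaf of $\mathcal{F}$ onto a leaf of $\mathcal{F}$. Here $\mathcal{F}$ is the genuine linear foliation of $\mathbb{C}^2\setminus\{0\}$, so no germ issues arise, and $S^3=S^3_1$ is a genuine real hypersurface transverse to $\mathcal{F}$ by the scaling invariance recalled just before the lemma.

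To conclude, I would fix $p\in S^3$ and let $L_p$ be the leaf of $\mathcal{F}$ through $p$; by definition the tangent line of $\mathcal{F}\cap S^3$ at $p$ is $T_pL_p\cap T_pS^3$, a $1$-dimensional subspace by the transversality recalled in Section~\ref{intsectfol-sec}. Applying the derivative $dT$ and using the two invariance properties above, $dT_p$ carries $T_pL_p\cap T_pS^3$ isomorphically onto $T_qL_q\cap T_qS^3$ with $q=T(p)$; that is, $T$ maps the $1$-dimensional distribution underlying $\mathcal{F}\cap S^3$ onto itself, and a diffeomorphism carrying an integrable line field onto itself carries its integral foliation onto itself. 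Equivalently, $dT|_{S^3}$ sends the orienting vector field $\Phi_{\mathcal{F}}$ of Section~\ref{intsectfol-sec} to a nowhere-vanishing rescaling of $\Phi_{\mathcal{F}}$.

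The computation is entirely elementary, and I do not expect any genuine obstacle. The one point worth stating carefully is that no compatibility between the $S^1\times S^1$-action and $\mathcal{F}$ beyond \emph{separate} invariance of $S^3$ and of $\mathcal{F}$ is needed, precisely because $\mathcal{F}\cap S^3$ is functorial under diffeomorphisms respecting both objects.
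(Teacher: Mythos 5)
Your proof is correct and takes essentially the same approach as the paper: the paper checks that the defining $1$-form $y\,dx - \lambda x\,dy$ pulls back to a nonzero scalar multiple of itself, while you check the dual statement that the vector field $\theta$ pushes forward to itself; both computations show the holomorphic line field is preserved, and the rest (invariance of $S^3$, hence of the intersected tangent line field) is identical.
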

\begin{proof}
The $1$-form $ydx - \lambda x dy$ corresponding to $\lambda x \frac{\partial}{\partial x} + y\frac{\partial}{\partial y}$ is pulled back to 
\[ ye^{it_2}d(xe^{it_1}) - \lambda xe^{it_1}d(ye^{it_2}) = e^{i(t_1+t_2)}(ydx - \lambda xdy) \]
by the action of $S^1 \times S^1$. Hence the tangent directions of the intersection foliation $\mathcal{F} \cap S^3$ are not changed, and the foliation is invariant under the action. 
\end{proof}

\noindent For $0 < \epsilon_x, \epsilon_y < 1$, denote the torus $\{(x,y) \in S^3: |x| = \epsilon_x\}$ by $T^x_{\epsilon_x}$ and the torus  $\{(x,y) \in S^3: |y| = \epsilon_y\}$ by $T^y_{\epsilon_y}$. Then $T^x_{\epsilon_x} = T^y_{\sqrt{1-\epsilon_x^2}}$.

\begin{lem} \label{torus-transv-lem}
$T^x_{\epsilon_x}$ intersects all the leaves of the intersection foliation $\mathcal{F} \cap S^3$  not lying on the coordinate axes exactly once and transversally.
\end{lem}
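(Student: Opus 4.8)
The plan is to work in explicit coordinates on $S^3$ and write down both the intersection foliation and the torus $T^x_{\epsilon_x}$ in a form that makes the intersection number and transversality a direct computation. Parametrize $S^3 = \{(x,y)\in\mathbb{C}^2 : |x|^2+|y|^2 = 1\}$ by $x = \rho e^{i\alpha}$, $y = \sqrt{1-\rho^2}\, e^{i\beta}$ with $0 < \rho < 1$, $\alpha,\beta\in\mathbb{R}/2\pi\mathbb{Z}$; this covers $S^3$ minus the two coordinate circles $\{x=0\}$ and $\{y=0\}$, which carry precisely the leaves we are excluding. In these coordinates $T^x_{\epsilon_x} = \{\rho = \epsilon_x\}$. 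The first step is to compute the real vector field directing $\mathcal{F}\cap S^3$ (equivalently, the kernel of the restriction to $S^3$ of the $1$-form $ydx - \lambda x\,dy$) in the $(\rho,\alpha,\beta)$-chart. A short calculation, using $\rho\,d\rho = -(1-\rho^2)\cdot\tfrac{1}{2}d(\cdot)$... more cleanly: restricting $\omega = y\,dx - \lambda x\,dy$ to $S^3$ and separating real and imaginary parts, one finds that the leaves of $\mathcal{F}\cap S^3$ off the axes satisfy a system equivalent to
\[
\frac{d\rho}{dt} = \mathrm{Re}(\lambda')\cdot(\text{positive factor}),\qquad \frac{d\alpha}{dt},\ \frac{d\beta}{dt} = (\text{bounded}),
\]
where the key point is that $\dot\rho$ has a fixed sign determined by $\mathrm{Re}(\lambda) \neq 0$ (here we use $\lambda \in \mathbb{C}-\mathbb{R}$, and after possibly replacing $\lambda$ by $1/\lambda$, i.e. swapping $x$ and $y$, we may take $\mathrm{Re}(\lambda) > 0$, or argue with $\mathrm{Im}(\lambda)\neq 0$ directly). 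Concretely: along any leaf the function $|y|^2/|x|^{2/?}$... the honest statement is that $\log|x|$ and $\log|y|$ evolve at constant (real) rates proportional to $\mathrm{Re}(1)$ and $\mathrm{Re}(\lambda)$ respectively along the leaves of the ambient foliation in $\mathbb{C}^2$, so along $\mathcal{F}\cap S^3$ the ratio $\rho/\sqrt{1-\rho^2} = |x|/|y|$ is strictly monotone in the flow parameter.

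Granting this monotonicity, transversality is immediate: the directing vector field of $\mathcal{F}\cap S^3$ has nonzero $\partial_\rho$-component everywhere off the axes, in particular at every point of $T^x_{\epsilon_x} = \{\rho = \epsilon_x\}$, so the leaf through such a point is not tangent to the level set $\{\rho=\epsilon_x\}$; this gives transversality. For the "exactly once" statement: let $L$ be a leaf of $\mathcal{F}\cap S^3$ not on the axes. Since $\dot\rho$ has constant sign along $L$, the function $\rho|_L$ is strictly monotone in the arclength (or flow) parameter, hence injective; therefore $L$ meets $\{\rho = \epsilon_x\}$ at most once. To see it meets it at least once, one shows $\rho|_L$ takes values arbitrarily close to $0$ and to $1$: as the flow parameter $\to\pm\infty$, the leaf of $\mathcal{F}$ in $\mathbb{C}^2$ (a complex curve $\{x = c y^{\lambda}\}$ up to the usual monodromy care) has $|x|/|y| \to 0$ or $\to\infty$, so its trace on $S^3$ has $\rho\to 0$ or $\rho\to 1$; by the intermediate value theorem $\rho|_L$ hits every value in $(0,1)$, in particular $\epsilon_x$.

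The main obstacle I expect is making the monotonicity of $\rho$ along leaves rigorous and coordinate-independent — in particular handling the multivaluedness of $y^\lambda$ and checking that the "constant rate of change of $\log|x|$, $\log|y|$" claim survives restriction to the sphere rather than being a statement about the ambient flow. The clean way around this is not to integrate at all: I would instead exhibit an explicit real function on $S^3\setminus(\text{axes})$, namely $u := \log(|x|/|y|)$ (or $\rho$ itself), compute $du$ restricted to the leaf direction of $\mathcal{F}\cap S^3$ directly from the $1$-form $\omega = y\,dx-\lambda x\,dy$, and verify pointwise that this derivative is nonzero with constant sign — a purely algebraic check once $\lambda\notin\mathbb{R}$ is used. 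That single computation simultaneously yields transversality (nonvanishing of the $\rho$-derivative at points of $T^x_{\epsilon_x}$), at-most-once (monotonicity of $\rho|_L$), and, combined with the boundary behaviour $\rho|_L \to 0,1$ at the ends of the leaf, at-least-once. I would also remark that the invariance under the $S^1\times S^1$-action (Lemma~\ref{S1xS1-inv-lem}) reduces the whole verification to the $(\rho,\alpha-\beta)$-reduced picture, i.e. to a planar computation, which keeps the bookkeeping minimal.
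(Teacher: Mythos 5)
Your ``clean way around'' at the end is exactly the paper's proof: the paper pairs the real $1$-forms $d(x\overline{x})=\overline{x}\,dx + x\,d\overline{x}$ and $d(y\overline{y})=\overline{y}\,dy + y\,d\overline{y}$ (whose common kernel in $T_pS^3$ is $T_pT^x_{\epsilon_x}$) against the real and imaginary parts of $\theta = \lambda x\,\partial_x + y\,\partial_y$, finds $d(y\overline{y})(\theta)=|y|^2\in\mathbb{R}_{>0}$ so only $\mathrm{Im}(\theta)$ could be tangent, then $d(x\overline{x})(\theta)=\lambda|x|^2\notin\mathbb{R}$ rules that out, and concludes transversality and strict monotonicity of $|x|$ along the trace. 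One genuine slip in your middle paragraph: the sign of $\dot\rho$ along $\mathcal{F}\cap S^3$ is governed by $\mathrm{Im}(\lambda)\neq 0$, not $\mathrm{Re}(\lambda)\neq 0$ --- if $\lambda\in\mathbb{R}_{>0}$ the trace leaves lie entirely on the tori $T^x_{\epsilon_x}$ (Lemma~\ref{real-leaf-lem}), and replacing $\lambda$ by $1/\lambda$ cannot manufacture $\mathrm{Re}(\lambda)>0$ when $\mathrm{Re}(\lambda)=0$; your parenthetical ``or argue with $\mathrm{Im}(\lambda)\neq 0$ directly'' is the correct dichotomy and is what the evaluation $\lambda|x|^2$ encodes. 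One thing you do more carefully than the paper: you separately justify the ``at least once'' half via the boundary behaviour $\rho|_L\to 0,1$ at the two ends of the leaf, whereas the paper folds it silently into ``consequently, such a leaf intersects $T^x_{\epsilon_x}$ exactly once''; strict monotonicity alone only gives ``at most once'', so your extra step is worth keeping.
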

\begin{proof}
The real tangent vectors to the torus $T^x_{\epsilon_x}$ in a point $(x,y)$ are those real tangent vectors that are annihilated by the real differential forms
\[ d(x\overline{x}) = \overline{x}dx + xd\overline{x}\ \mathrm{and}\ d(y\overline{y}) = \overline{y}dy + yd\overline{y}. \]
The real tangent vectors to the leaf $L_{(x,y)}$ through $(x,y) \in T^x_{\epsilon_x}$ in $(x,y)$ are the $\mathbb{R}$-linear combinations of the real and imaginary part of $\lambda x \frac{\partial}{\partial x} + y \frac{\partial}{\partial y}$. Since
\[ (\overline{y}dy + yd\overline{y})(\lambda x \frac{\partial}{\partial x} + y\frac{\partial}{\partial y}) = \overline{y}y \in \mathbb{R}\ \mathrm{and}\ y \neq 0 \]
only the imaginary part of $\lambda x \frac{\partial}{\partial x} + y\frac{\partial}{\partial y}$ can be tangent to $T^x_{\epsilon_x}$. Since
\[ (\overline{x}dx + xd\overline{x})(\lambda x \frac{\partial}{\partial x} + y\frac{\partial}{\partial y}) = \lambda \overline{x}x\  \mathrm{and}\ x \neq 0 \]
this can only be the case if $\mathrm{Im}(\lambda) = 0$. But we assumed $\lambda \in \mathbb{C} - \mathbb{R}$, hence the real tangent spaces of $T^x_{\epsilon_x}$ and $L_{(x,y)}$ intersect transversally, hence the leaf  $L_{(x,y)} \cap S^3$ of $\mathcal{F} \cap S^3$  and $T^x_{\epsilon_x}$ intersect transversally.

\noindent In particular, on a leaf of $\mathcal{F} \cap S^3$ different from $\{ x = 0\}$ and $\{y = 0\}$ the absolute value of the $x$-coordinate must always strictly increase or decrease. Consequently, such a leaf intersects $T^x_{\epsilon_x}$ exactly once.
\end{proof}

\noindent For $0 < \epsilon_x, \epsilon_y < 1$  and $t \in \mathbb{R}$ denote the disk $\{ (x,\sqrt{1-|x|^2}e^{it}) \in S^3: |x| < \epsilon_x \}$ by $D^x_{t,\epsilon_x}$ and the disk  $\{ (\sqrt{1-|y|^2}e^{it},y) \in S^3: |y| < \epsilon_y \}$ by $D^y_{t,\epsilon_y}$.
\begin{lem} \label{disk-transv-lem}
$D^x_{t,\epsilon_x}$ and $D^y_{t,\epsilon_y}$ intersect all the leaves of the intersection foliation $\mathcal{F} \cap S^3$ everywhere transversally.
\end{lem}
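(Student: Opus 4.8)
The plan is to mimic the computation in the proof of Lemma~\ref{torus-transv-lem}, replacing the pair of defining functions of the torus by the pair of defining functions of the disk $D^x_{t,\epsilon_x}$, and then check that the tangent space of this disk and the tangent space of a leaf of $\mathcal{F}\cap S^3$ span a $2$-dimensional space (equivalently, meet only in $0$) at every point. A disk $D^x_{t,\epsilon_x}$ sits inside $S^3$ cut out by the single extra real equation $\arg y = t$, i.e.\ by $\mathrm{Im}\,(e^{-it} y) = 0$; more symmetrically, the real tangent vectors to $D^x_{t,\epsilon_x}$ at a point $(x,y)$ with $y\neq 0$ are exactly those real tangent vectors annihilated by the real $1$-form $d\bigl(\tfrac{y}{\overline{y}}\bigr)$ (the differential of $e^{2i\arg y}$), together with the sphere condition $d(x\overline x + y\overline y)=0$ which is automatic on $S^3$. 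So the disk is codimension $1$ in $S^3$, and a generic $1$-dimensional leaf of $\mathcal{F}\cap S^3$ will meet it transversally precisely when the leaf's tangent direction is not annihilated by $d\bigl(\tfrac{y}{\overline{y}}\bigr)$.

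First I would write the real tangent space of the leaf $L_{(x,y)}\cap S^3$ through $(x,y)$: it is the span of the real and imaginary parts of $\theta = \lambda x\frac{\partial}{\partial x} + y\frac{\partial}{\partial y}$, restricted to the sphere (which is where the $1$-dimensional trace foliation lives). Then I would evaluate the relevant $1$-form on these two generators. A convenient choice is $\omega := \overline{y}\,dy - y\,d\overline{y}$, whose kernel (on $S^3$) defines $D^x_{t,\epsilon_x}$ for the appropriate $t$ since $\omega = \overline{y}y\cdot d(\log y - \log\overline y) = 2i\,|y|^2\,d(\arg y)$. We compute
\[
\omega\bigl(\lambda x\tfrac{\partial}{\partial x} + y\tfrac{\partial}{\partial y}\bigr) = \overline{y}\cdot y - y\cdot \overline{y} = 0,
\]
so $\omega$ vanishes on the \emph{holomorphic} generator; but for a transversality statement about the real $1$-dimensional trace direction we need to see that $\omega$ does \emph{not} vanish identically on the real $2$-plane spanned by $\mathrm{Re}\,\theta$ and $\mathrm{Im}\,\theta$. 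Concretely, applying $\omega$ to $i\theta$ (which corresponds to $\mathrm{Im}$ part of the relevant combination) gives $\overline y\cdot(iy) - y\cdot\overline{(iy)} = i\overline y y + i y\overline y = 2i|y|^2 \neq 0$ whenever $y\neq 0$. Hence the restriction of $\omega$ to the real tangent plane of the leaf through $(x,y)$ is nonzero as soon as $y\neq 0$, which forces the trace leaf $L_{(x,y)}\cap S^3$ to be transverse to $D^x_{t,\epsilon_x}$ — and this argument covers the coordinate-axis leaves $\{x=0\}$ and $\{y=0\}$ automatically, since on $D^x_{t,\epsilon_x}$ we always have $|x|<\epsilon_x<1$ so $y\neq 0$ there. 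By the symmetric computation with $\overline{x}\,dx - x\,d\overline{x}$ and the fact that $|y|<\epsilon_y<1$ on $D^y_{t,\epsilon_y}$ forces $x\neq 0$, the same conclusion holds for $D^y_{t,\epsilon_y}$.

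**The main subtlety** — and the only place one must be a little careful — is the bookkeeping between the three relevant tangent spaces: the ambient $\mathbb{C}^2$, the sphere $S^3$, and the leaf of the \emph{holomorphic} foliation versus the leaf of the $1$-dimensional \emph{trace} foliation $\mathcal{F}\cap S^3$. The transversality we want is of the trace leaf (dimension $1$) and the disk (dimension $2$) inside $S^3$ (dimension $3$), so dimensions are exactly complementary and "transverse" means "tangent spaces meet only in $0$". The clean way to organize this is: the trace direction at $(x,y)$ is $T_{(x,y)}S^3 \cap T_{(x,y)}L_{(x,y)}$, so it suffices to show the real $2$-plane $T_{(x,y)}L_{(x,y)} = \langle\mathrm{Re}\,\theta,\mathrm{Im}\,\theta\rangle$ is transverse, inside $\mathbb{C}^2$, to the $3$-plane $T_{(x,y)}D^x_{t,\epsilon_x}$, which is cut out inside $\mathbb{C}^2$ by the two real equations $d(x\overline x + y\overline y)=0$ and $\omega=0$. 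The first equation is handled exactly as in Lemma~\ref{torus-transv-lem} (it already vanishes on $\mathrm{Re}\,\theta$ but not on $\mathrm{Im}\,\theta$, using $|y|^2\in\mathbb{R}$, $y\neq0$); the second is the computation above. Since the leaf plane is not contained in either hyperplane and the two linear functionals $d(x\overline x+y\overline y)$ and $\omega$ are independent on it (one kills $\mathrm{Re}\,\theta$, the other does not), the $2$-plane meets the $3$-plane transversally, giving the claim. I do not expect any genuine obstacle here beyond this dimension count; the hypotheses $\lambda\in\mathbb{C}-\mathbb{R}$ and $\epsilon_x,\epsilon_y<1$ are used only to guarantee $y\neq 0$ on $D^x$ and $x\neq 0$ on $D^y$, and, unlike in Lemma~\ref{torus-transv-lem}, the value of $\mathrm{Im}\,\lambda$ plays no role in this particular transversality statement.
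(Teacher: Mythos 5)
Your framing is sound: identify $T_{(x,y)}D^x_{t,\epsilon_x}$ as the common kernel in $T_{(x,y)}\mathbb{C}^2$ of the two real functionals $\alpha_1:=d(x\overline{x}+y\overline{y})$ and $\alpha_2:=\tfrac{1}{i}(\overline{y}\,dy - y\,d\overline{y})$, and try to show that $(\alpha_1,\alpha_2)$ restricts to an isomorphism on the real $2$-plane $T_{(x,y)}L_{(x,y)}=\langle\mathrm{Re}\,\theta,\mathrm{Im}\,\theta\rangle_\mathbb{R}$. Your evaluation $\alpha_2(\mathrm{Re}\,\theta)=0$, $\alpha_2(\mathrm{Im}\,\theta)\neq 0$ is correct. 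However, the parenthetical claim that the sphere form $\alpha_1$ ``already vanishes on $\mathrm{Re}\,\theta$ but not on $\mathrm{Im}\,\theta$ exactly as in Lemma~\ref{torus-transv-lem}'' both misremembers that lemma (there the form is $d(|y|^2)$, which kills $\mathrm{Im}\,\theta$, not $\mathrm{Re}\,\theta$) and is wrong for $\alpha_1$: one has $\alpha_1(\theta)=\lambda|x|^2+|y|^2$, so $\alpha_1(\mathrm{Re}\,\theta)=\mathrm{Re}(\lambda)|x|^2+|y|^2$ and $\alpha_1(\mathrm{Im}\,\theta)=\mathrm{Im}(\lambda)|x|^2$. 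Since $\alpha_2$ already kills $\mathrm{Re}\,\theta$, the determinant of $(\alpha_1,\alpha_2)$ on $TL$ in the basis $(\mathrm{Re}\,\theta,\mathrm{Im}\,\theta)$ is $\alpha_1(\mathrm{Re}\,\theta)\cdot\alpha_2(\mathrm{Im}\,\theta)$, a nonzero multiple of $\mathrm{Re}(\lambda)|x|^2+|y|^2$; what you would actually need to prove is that this never vanishes. But if $\mathrm{Re}\,\lambda<0$ it vanishes on the circle $|x|^2=(1-\mathrm{Re}\,\lambda)^{-1}<1$, which lies in $D^x_{t,\epsilon_x}$ once $\epsilon_x>(1-\mathrm{Re}\,\lambda)^{-1/2}$. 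At such points $\mathrm{Re}\,\theta$ is annihilated by both $\alpha_1$ and $\alpha_2$, so $\mathrm{Re}\,\theta\in T_{(x,y)}D^x_{t,\epsilon_x}$ and transversality genuinely fails; your two functionals are not independent on $TL$ there.

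This is not a gap you can patch by a sharper estimate, because the difficulty is in the statement itself: the only hypothesis in Section~\ref{nonreal-sec} is $\lambda\in\mathbb{C}-\mathbb{R}$ (the Poincar\'e condition excludes only $\lambda\in\mathbb{R}_{\leq 0}$), so $\mathrm{Re}\,\lambda<0$ is permitted and the lemma, as stated for all $0<\epsilon_x<1$, is false in that regime. You should also be aware that the paper's own argument rests on the value $\omega_{\mathrm{Re}}(\theta)=2y^2+\lambda x\overline x$, whereas a direct evaluation of $\omega_{\mathrm{Re}}=y\,dy+\overline y\,d\overline y+\overline x\,dx+x\,d\overline x$ on $\theta=\lambda x\tfrac{\partial}{\partial x}+y\tfrac{\partial}{\partial y}$ gives $y^2+\lambda x\overline x$. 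With the corrected value, $\mathrm{Re}[\omega_{\mathrm{Re}}(\theta)]$ and $\mathrm{Re}[d(x\overline x+y\overline y)(\theta)]$ coincide (indeed, at points of $D^x_{0,\epsilon_x}$ where $y=\overline y$, the $1$-forms $\omega_{\mathrm{Re}}$ and $d(x\overline x+y\overline y)$ are identical), so the paper's dichotomy collapses to the same single condition $\mathrm{Re}(\lambda)|x|^2+|y|^2\neq 0$ that you need. A correct statement requires an extra hypothesis such as $\mathrm{Re}\,\lambda>0$, or a bound on $\epsilon_x$ in terms of $\lambda$, and the subsequent construction of the $\arg y$-parametrisation $\phi^{(i)}_x$ in the proof of Theorem~\ref{R-LI-thm} is affected in the same way.
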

\begin{proof}
By the $S^1 \times S^1$-invariance of the leaves of $\mathcal{F} \cap S^3$ shown in Lemma~\ref{S1xS1-inv-lem} we can assume that $t=0$. Since $D^x_{0,\epsilon_x}$ is an open subset of $\{y^2 = 1 - |x|^2\} \subset \mathbb{C}^2$, a smooth manifold for $|x| < 1$, the real tangent vectors to $D^x_{0,\epsilon_x}$ are exactly those annihilated by the real and the imaginary part of the differential form
\[ \omega = d(y^2 + |x|^2 - 1) = 2ydy + \overline{x}dx + xd\overline{x}. \]
We have
\[ \omega_{\mathrm{Re}} = ydy + \overline{y}d\overline{y} + \overline{x}dx + xd\overline{x}\ \mathrm{and}\ 
    \omega_{\mathrm{Im}} = -i(ydy - \overline{y}d\overline{y}). \]
Let $\theta(x,y)$ denote the complex tangent vector $\lambda x \frac{\partial}{\partial x} + y\frac{\partial}{\partial y}$ to the leaf $L_{(x,y)}$ through $(x,y) \in D^x_{0,\epsilon_x}$. Then $\omega_{\mathrm{Im}}(\theta(x,y)) = -iy^2 \in i\mathbb{R} - \{0\}$
since $y \in \mathbb{R} - \{0\}$. But the real part of $\theta(x,y)$ is not tangent to both $\{y^2 = 1 - |x|^2\}$ and $S^3 = \{ \overline{x}x + \overline{y}y = 1 \}$ either:
\[ \omega_{\mathrm{Re}}(\theta(x,y)) = 2y^2 + \lambda x \overline{x}\ \mathrm{and}\ d(\overline{x}x + \overline{y}y)(\theta(x,y)) = \lambda x \overline{x} + y\overline{y}, \]
hence the real part of the first number vanishes for $\mathrm{Re} \lambda = -\frac{2y^2}{|x|^2}$, the second for $\mathrm{Re} \lambda = -\frac{y\overline{y}}{|x|^2}$. Since $y \neq 0$ this cannot happen for the same $\lambda$. 
\end{proof}

\begin{figure}
	\centering
\includegraphics[width=0.8\textwidth]{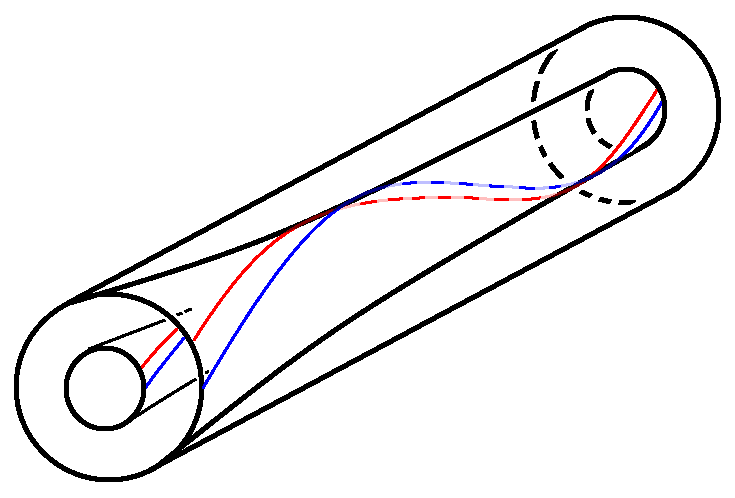}
  \caption{}
	\label{fig1}
\end{figure}

\noindent Figure~\ref{fig1} visualizes the behaviour of leaves of $\mathcal{F} \cap S^3$ in the cut-up solid torus $\bigcup_{0 \leq \epsilon_x \leq \epsilon} T^x_{\epsilon_x}$ (resp. $\bigcup_{0 \leq \epsilon_y \leq \epsilon} T^y_{\epsilon_y}$) as decribed by Lem.~\ref{torus-transv-lem} and~\ref{disk-transv-lem}.

\begin{thm} \label{R-LI-thm}
Let $\lambda_1 x \frac{\partial}{\partial x} + y\frac{\partial}{\partial y}$ and $\lambda_2 x \frac{\partial}{\partial x} + y\frac{\partial}{\partial y}$ represent two holomorphic foliation germs $\mathcal{F}_1, \mathcal{F}_2$ in $0 \in \mathbb{C}^2$, with $\lambda_1, \lambda_2 \in \mathbb{C} - \mathbb{R}$. Then $\mathcal{F}_1$ and $\mathcal{F}_2$ are topologically equivalent.
\end{thm}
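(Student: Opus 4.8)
By the Reconstruction Theorem~\ref{topeq-intersec-fol-thm} it suffices to exhibit a homeomorphism of $S^3$ carrying the leaves of $\mathcal{F}_1 \cap S^3$ onto those of $\mathcal{F}_2 \cap S^3$. The plan is to build this homeomorphism from an explicit model of the intersection foliation obtained from Lemmas~\ref{S1xS1-inv-lem},~\ref{torus-transv-lem} and~\ref{disk-transv-lem}, rather than from a direct comparison of the two germs in $\mathbb{C}^2$. First I would record that the two coordinate axes $\{x=0\}$ and $\{y=0\}$ meet $S^3$ in two linked circles, each of which is an invariant (closed) leaf of $\mathcal{F}_i \cap S^3$, and that by Lemma~\ref{torus-transv-lem} every other leaf spirals from the circle $\{x=0\}$ to the circle $\{y=0\}$, crossing every intermediate torus $T^x_{\epsilon_x}$ exactly once and transversally.

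The key step is to coordinatize the complement of the two circles as $(S^3 \setminus (\{x=0\}\cup\{y=0\}))\cong S^1_{\arg x}\times S^1_{\arg y}\times (0,1)$, where the last coordinate is $|x|^2\in(0,1)$, and to compute the leaf in these coordinates. Writing $x=\sqrt{\rho}\,e^{i\alpha}$, $y=\sqrt{1-\rho}\,e^{i\beta}$ and using the $1$-form $y\,dx-\lambda x\,dy$ from Lemma~\ref{S1xS1-inv-lem}, one gets that along a leaf $\alpha$ and $\beta$ each move monotonically as $\rho$ runs over $(0,1)$, with $d\alpha/d(\log\rho)$ and $d\beta/d(\log\rho)$ governed by $\mathrm{Im}(\lambda)$ and $\mathrm{Re}(\lambda)$ in a way that never vanishes (this is exactly the transversality computed in Lemma~\ref{torus-transv-lem}, now made quantitative). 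So each leaf is the graph of a function $(\log\rho)\mapsto(\alpha(\log\rho),\beta(\log\rho))$, i.e.\ a line of some irrational-or-rational ``slope'' on each torus; the holonomy of $\mathcal{F}_i\cap S^3$ along, say, the meridian of $\{x=0\}$ is a linear map of $S^1$ determined by $\lambda_i$. The point is that the \emph{topological} type of this picture — two linked closed leaves together with a one-parameter family of leaves, each meeting every $T^x_{\epsilon_x}$ once transversally and limiting onto the two circles — is the same for every $\lambda\in\mathbb{C}\setminus\mathbb{R}$: one can straighten the family of leaves by a fibre-preserving homeomorphism of $S^1\times S^1\times(0,1)$ (acting on each $\arg$-factor) that carries the $\mathcal{F}_1$-slope field to the $\mathcal{F}_2$-slope field, because both are nowhere-vertical and the conjugating map only has to match the flows up to reparametrization, not the exact angular speeds.

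Concretely I would (i) fix the two invariant circles pointwise; (ii) on the open solid-torus complement use the product structure from Lemma~\ref{disk-transv-lem} (the disks $D^x_{t,\epsilon_x}$ are global transversals) to identify the leaf space of $\mathcal{F}_i\cap S^3$ restricted there with a single disk $D^x_{0,\epsilon}$, and build a homeomorphism $D^x_{0,\epsilon}\to D^x_{0,\epsilon}$ realizing the induced identification of leaf spaces; (iii) extend it along the leaves using the monotone parameter $|x|^2$, i.e.\ transport by the flows $\Phi_{\mathcal{F}_1}$, $\Phi_{\mathcal{F}_2}$; and (iv) check that the resulting homeomorphism of the open part extends continuously across the two circles, using that as $\rho\to 0$ (resp.\ $\rho\to1$) a leaf converges to the circle $\{x=0\}$ (resp.\ $\{y=0\}$) in $S^3$. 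The main obstacle I anticipate is step (iv) together with the matching in step (ii): one must ensure the homeomorphism of the transversal disks is chosen compatibly with the boundary behaviour so that the extension over the linked circles is a genuine homeomorphism of $S^3$ (in particular continuous, with continuous inverse) rather than merely a leaf-preserving bijection on the open complement; controlling the limiting holonomy/rotation as $\rho\to 0,1$ is where the hypothesis $\lambda_i\notin\mathbb{R}$ is really used, since it guarantees the leaves wind around both circles and prevents the degenerate ``radial'' picture.
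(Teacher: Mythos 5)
Your overall framework matches the paper's: reduce via Theorem~\ref{topeq-intersec-fol-thm} to a topological equivalence of the trace foliations on $S^3$; exploit the two closed leaves on the coordinate axes, the transversality of the tori $T^x_{\epsilon_x}$ (Lemma~\ref{torus-transv-lem}), and the transversality of the disks $D^x_{t,\epsilon_x}$ (Lemma~\ref{disk-transv-lem}); then build a leaf-preserving homeomorphism of the open complement and try to extend it across the two circles. You also correctly locate the difficulty at the extension step. But your step (iii) --- transport along the flows matching the parameter $|x|^2$ --- does not work, and the proposal offers no replacement. Matching $|x|^2$ forces the candidate homeomorphism to preserve every torus $T^x_{\epsilon_x}$, so its restriction to $T^x_{\epsilon_x}$ differs from the chosen disk identification by a rotation equal to the difference of the angular displacements accumulated by the two flows from $T^x_{1/2}$ down to $T^x_{\epsilon_x}$; since both families of leaves spiral infinitely around $\{x=0\}\cap S^3$ but at rates governed by the different $\lambda_1$ and $\lambda_2$, this relative rotation diverges as $\epsilon_x\to 0$, the map oscillates near the core circle, and no continuous extension exists. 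Your remark that $\lambda_i\notin\mathbb{R}$ ``prevents the degenerate radial picture'' is slightly backwards in this regard: the infinite winding is precisely what \emph{creates} the divergent twist, not what tames it.

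The missing idea is to reparametrize the radial parameter by the angular holonomy. The change of $\arg y$ along a leaf from $T^x_{1/2}$ to $T^x_{\epsilon_x}$ is a strictly monotone, leaf-independent (by Lemma~\ref{S1xS1-inv-lem}) function $\phi_x^{(i)}$ of $\epsilon_x$; inserting the increasing reparametrization $(\phi_x^{(2)})^{-1}\circ\phi_x^{(1)}$ of $(0,1/2]$ before transporting along the $\mathcal{F}_2$-leaves produces a map $\Phi_x$ of the solid torus $\{|x|\le 1/2\}\cap S^3$ that no longer preserves the individual tori $T^x_{\epsilon_x}$ but \emph{does} preserve each disk $D^x_{t,1/2}$, is the identity on the boundary torus $T^x_{1/2}$, and hence extends by the identity across $\{x=0\}\cap S^3$. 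Gluing the analogous $\Phi_y$ along $T^x_{1/2}$ finishes the construction. Without this reparametrization --- or some equivalent device that absorbs the divergent relative twist --- your steps (i)--(iv) do not assemble into a homeomorphism of $S^3$, so there is a genuine gap exactly at the point you flag.
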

\begin{proof}
We will construct a topological equivalence of the intersection foliations $\mathcal{F}_1 \cap S^3$ and $\mathcal{F}_2 \cap S^3$. Then the statement follows by the Reconstruction Theorem~\ref{topeq-intersec-fol-thm}.

\noindent Lemma~\ref{torus-transv-lem} and~\ref{disk-transv-lem} show that every leaf of $\mathcal{F}_i$ in the tubular torus $\{ (x,y) \in S^3: 0 < |x| \leq \frac{1}{2} \}$ is parametrized on the one hand by the absolute value $\epsilon_x$ of the $x$-coordinate, on the other hand by the argument $t$ of the $y$-coordinate. The parametrisation by $\epsilon_x$ yields the homeomorphisms
\[ \Phi_x^{(i)}: T_{1/2} \times (0, 1/2] \rightarrow \{ (x,y) \in S^3: 0 < |x| \leq 1/2 \} \] 
mapping a pair $(x,y) \times \epsilon_x$ to the unique intersection point of the leaf of $\mathcal{F}_i \cap S^3$ through $(x,y)$ with $T^x_{\epsilon_x}$.

\noindent Then $\Phi_x^{(2)} \circ (\Phi_x^{(1)})^{-1}$ is a homeomorphism of the  tubular torus $\{ (x,y) \in S^3: 0 < |x| \leq \frac{1}{2} \}$ into itself but might not be extendable to a homeomorphism of the solid torus $\{ (x,y) \in S^3: 0 \leq |x| \leq \frac{1}{2} \}$. To achieve that we reparametrize the $\epsilon_x$-interval $(0,\frac{1}{2}]$ using the second parametrization by the argument of the $y$-coordinate: Every leaf $L_{(x,y)}$ through a point $(x,y) \in T^x_{\frac{1}{2}}$ defines an invertible function $\phi^{(i)}_x: (0,\frac{1}{2}] \rightarrow [0, \infty)$, mapping $\epsilon_x$ to $t - t_0$ where $t$ is the argument of the $y$-coordinate of the intersection point of $L_{(x,y)}$ with $T^x_{\epsilon_x}$ and $t_0$ is the argument of $y$. These functions are the same for all such leaves because of the $S^1 \times S^1$-invariance, and we always have $\phi^{(i)}_x(\frac{1}{2}) = 0$. Then
\[ \Phi_x^{(2)} \circ \left[ \mathrm{id}_{T^x{\frac{1}{2}}} \times ((\phi^{(2)}_x)^{-1} \circ \phi^{(1)}_x) \right] \circ (\Phi_x^{(1)})^{-1} \]
maps $D^x_{t,\frac{1}{2}}$ and $T^x_{\epsilon_x}$, $0 < \epsilon_x \leq \frac{1}{2}$ onto themselves. This implies that the identity map on $\{x=0\} \cap S^3$ extends this composition of maps to a homeomorphism $\Phi_x$ of the solid torus $\{ (x,y) \in S^3: 0 < |x| \leq \frac{1}{2} \}$ mapping leaves of $\mathcal{F}_1 \cap S^3$ to leaves of $\mathcal{F}_2 \cap S^3$. Furthermore, the restriction of $\Phi_x$ to $T^x_{\frac{1}{2}}$ is the identity map.

\noindent In the same way we can construct a homeomorphism $\Phi_y$ of the solid torus $\{ (x,y) \in S^3: 0 < |y| \leq \frac{1}{2} \}$ mapping leaves of $\mathcal{F}_1 \cap S^3$ to leaves of $\mathcal{F}_2 \cap S^3$. Since again the restriction of $\Phi_y$ to $T^x_{\frac{1}{2}}$ is the identity map $\Phi_x$ and $\Phi_y$ glue to a topological equivalence of the intersection foliations $\mathcal{F}_1 \cap S^3$ and $\mathcal{F}_2 \cap S^3$ .
\end{proof}

\begin{rem}
The theorem is Guckenheimer's result in dimension $2$ \cite{Guc72}. The proof above yields the construction of an explicit topological equivalence which is missing in Guckenheimer's original argument. Another explicit topological equivalence is constructed in \cite{CKP78} using polycylinders instead of balls.
\end{rem}

\section{The resonant case in dimension 2} \label{resonant-sec}

\noindent In this section, we only consider holomorphic foliation germs $\mathcal{F}_m$ around $0 \in \mathbb{C}^2$ represented by vector fields $\theta_m$ of the form 
\[  (mx + y^m) \frac{\partial}{\partial x} + y\frac{\partial}{\partial y},\ m \geq 1. \] 
Note that all these foliation germs are equal to the germs in Rem.~\ref{dim2-fol-rem}.(3) and (4), up to possibly rescaling the $x$-coordinate.

\begin{lem}
The leaves of $\mathcal{F}_m$ intersect all spheres $S^3_{\epsilon}$, $0 < \epsilon \leq 1$, transversally. In particular the real intersection foliation $\mathcal{F}_m \cap S^3$ on $S^3 = S^3_1$ exists.
\end{lem}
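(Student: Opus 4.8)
The plan is to verify transversality directly by the same kind of computation used in Lemma~\ref{torus-transv-lem} and Lemma~\ref{disk-transv-lem}, namely by pairing the squared-radius function with the real vector field underlying $\theta_m$. Concretely, let $\rho(x,y) = |x|^2 + |y|^2$, so that $S^3_\epsilon = \rho^{-1}(\epsilon^2)$, and recall that a leaf $L$ meets $S^3_\epsilon$ transversally at $p$ precisely when the real $2$-plane $T_p L$ (spanned over $\mathbb{R}$ by $\theta_m(p)$ and $i\theta_m(p)$) is \emph{not} contained in $\ker d\rho_p$; equivalently, when $d\rho_p(\theta_m(p)) \neq 0$ as an element of $\mathbb{C}$ (here I use the convention $d\rho(\theta) = \theta(\rho)$ with $\theta$ acting as a complex derivation, whose real and imaginary parts give the pairings with the real tangent vectors spanning $T_pL$). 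So the whole statement reduces to showing that $\theta_m(\rho)$ does not vanish at any point of $S^3_\epsilon$ for $0 < \epsilon \le 1$.

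First I would compute $\theta_m(\rho)$. Writing $\rho = x\bar x + y\bar y$ and applying $\theta_m = (mx+y^m)\frac{\partial}{\partial x} + y\frac{\partial}{\partial y}$ (which kills $\bar x, \bar y$), one gets
\[
\theta_m(\rho) = (mx + y^m)\bar x + y\bar y = m|x|^2 + |y|^2 + y^m \bar x.
\]
Then the key estimate is that the real part of this expression is strictly positive on $S^3_\epsilon$: since $|y^m\bar x| = |y|^m |x| \le |y|\,|x| \le \tfrac12(|x|^2 + |y|^2)$ for $|y| \le 1$ (using $|y|^m \le |y|$ because $m \ge 1$ and $|y|\le 1$, and then AM–GM), we obtain
\[
\mathrm{Re}\,\theta_m(\rho) \ge m|x|^2 + |y|^2 - \tfrac12(|x|^2+|y|^2) \ge \tfrac12(|x|^2 + |y|^2) = \tfrac12\epsilon^2 > 0
\]
on $S^3_\epsilon$, using $m \ge 1$. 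Hence $\theta_m(\rho) \neq 0$ everywhere on $S^3_\epsilon$, and transversality follows; integrability of the resulting $1$-dimensional distribution on $S^3$ is automatic as in Section~\ref{intsectfol-sec}, so $\mathcal{F}_m \cap S^3$ is a well-defined real foliation.

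I do not expect a serious obstacle here: the only mild subtlety is the bound $|y|^m \le |y|$ on $S^3_\epsilon$, which genuinely needs $|y| \le \epsilon \le 1$ — this is exactly why the statement restricts to $\epsilon \le 1$ (equivalently, to the normalized representative). It is worth noting in passing that for $m \ge 2$ one in fact has $\mathrm{Re}\,\theta_m(\rho) > 0$ with room to spare, and even for $m=1$ strict positivity holds because equality in AM–GM would force $|x| = |y| = \epsilon/\sqrt2$ and $y\bar x$ real negative, but then $\mathrm{Re}\,\theta_1(\rho) = |x|^2 + |y|^2 - |x||y| = \tfrac12\epsilon^2 > 0$ still. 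So no case analysis on $m$ is actually needed for the conclusion, only the uniform bound above.
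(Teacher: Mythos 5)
Your proof is correct and follows essentially the same route as the paper: reduce transversality to the non-vanishing of $\theta_m(\rho)$ on $S^3_\epsilon$, compute $\theta_m(\rho) = m|x|^2 + |y|^2 + y^m\bar x$, and dominate the cross term by the quadratic part. The only (cosmetic) difference is the final estimate --- you use AM--GM and $|y|^m\le|y|$ to get the uniform lower bound $\mathrm{Re}\,\theta_m(\rho)\ge\tfrac12\epsilon^2$, whereas the paper bounds $|\bar x y^m|<\epsilon^{m+1}\le\epsilon^2$ directly.
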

\begin{proof}
The leaf of $\mathcal{F}_m$ through $p=(x,y)$ does not intersect $S^3_{\epsilon}$ transversally in $p$ if, and only if the holomorphic tangent vector $\theta_m(p)$ is tangent to $S^3_{\epsilon}$ in $p$ if, and only if
\[ \left( \overline{x}dx + \overline{y}dy + x d\overline{x} + y d\overline{y} \right) 
   \left( (mx + y^m) \frac{\partial}{\partial x} + y\frac{\partial}{\partial y} \right) = \overline{x} x + \overline{y} y + (m-1)  \overline{x} x + \overline{x} y^m = 0. \] 
But this is impossible since $ \overline{x} x +  \overline{y} y = \epsilon^2$, $(m-1)  \overline{x} x \geq 0$ and 
\[  |\overline{x} y^m| =  |x| \cdot |y|^m < \epsilon^{m+1} \leq \epsilon^2, \]
since $|x|, |y| < \epsilon$ but never $|x| = |y| = \epsilon$.
\end{proof}

\noindent Next, we analyse the leaves of the intersection foliations $\mathcal{F}_m \cap S^3$.
\begin{prop} \label{leaf-res-prop}
The only closed leaf of $\mathcal{F}_m \cap S^3$ is $\{y = 0\} \cap S^3$. The closure of any leaf $L_{(a,b)}$ through a point $(a,b) \in S^3 - \{y = 0\}$ is $L_{(a,b)} \cup (\{y = 0\} \cap S^3)$. For a certain $\epsilon_y = \epsilon_y(L_{(a,b)})$ with $0 < \epsilon_y \leq 1$ the leaf $L_{(a,b)}$ intersects a torus $T^y_{\epsilon_y^\prime}$ 
\begin{itemize}
\item in two distinct points if $0 < \epsilon_y^\prime < \epsilon_y$, 
\item in one point if $\epsilon_y^ \prime = \epsilon_y$ and 
\item not at all if $\epsilon_y^\prime > \epsilon_y$. 
\end{itemize}
\end{prop}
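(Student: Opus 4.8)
The plan is to work directly with the vector field $\theta_m = (mx+y^m)\frac{\partial}{\partial x} + y\frac{\partial}{\partial y}$ and to integrate it explicitly on a leaf through a point $(a,b)$ with $b\neq 0$. Parametrizing the complex leaf by a complex time $s$, the $y$-component satisfies $\dot y = y$, so $y(s) = b e^s$, and the $x$-component satisfies the linear (inhomogeneous) ODE $\dot x = mx + y^m = mx + b^m e^{ms}$, whose solution is $x(s) = e^{ms}(a + b^m s)$ (using the resonance $m\cdot 1 = m$, which is exactly why a logarithmic/linear-in-$s$ term appears). I would first record these formulas, note that $\{y=0\}$ is invariant and that on it $\theta_m$ restricts to $mx\frac{\partial}{\partial x}$, whose intersection with $S^3$ is the Hopf-type circle $\{y=0\}\cap S^3$ — this gives one closed leaf, and I would check it is the only compact leaf at the end.

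Next I would restrict attention to the real intersection foliation: a leaf $L_{(a,b)}$ of $\mathcal{F}_m\cap S^3$ is the set of $(x(s),y(s))$, for $s$ ranging over the (real one-dimensional) set where $|x(s)|^2+|y(s)|^2 = 1$. Write $u = \operatorname{Re}(s)$ and $v = \operatorname{Im}(s)$. Then $|y(s)|^2 = |b|^2 e^{2u}$ depends only on $u$, so the function $\epsilon_y^\prime := |y|$ on the leaf is governed by $u$ alone: as $u$ increases, $|y|$ strictly increases. The constraint $|x(s)|^2 = 1 - |b|^2 e^{2u}$ forces $u$ into a half-line $u\le u_{\max}$ where $u_{\max}$ is determined by $|b|^2 e^{2u_{\max}} \le 1$, i.e.\ $|y|\le 1$ always; more importantly I need to see that for each admissible value of $|y| = \epsilon_y^\prime$ the set of $s$ on the leaf with that $|y|$-value consists of (generically) two points, then one, then none. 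Fixing $u$ (hence fixing $|y|^2 = |b|^2 e^{2u}$ and the required value $R^2 := 1 - |b|^2 e^{2u}$ of $|x(s)|^2$), the function $v\mapsto |x(u+iv)|^2 = e^{2mu}\,|a + b^m(u+iv)|^2 = e^{2mu}\big(|a+b^m u|^2 + \ldots + |b^m|^2 v^2 + \ldots\big)$ is, for fixed $u$, a quadratic polynomial in $v$ with positive leading coefficient $e^{2mu}|b|^{2m}$. Hence for fixed $u$ it attains its minimum at one value of $v$ and is strictly convex; so the equation $|x(u+iv)|^2 = R^2$ has either $0$, $1$, or $2$ solutions in $v$ according as $R^2$ is below, equal to, or above that minimum. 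The value $\epsilon_y$ of the Proposition is precisely the value of $|y|$ at the $u$ for which this quadratic's minimum equals $R^2 = 1-|b|^2e^{2u}$; one has to argue such a $u$ exists and is unique — this follows because the minimum of the quadratic, as a function of $u$, and the quantity $1-|b|^2e^{2u}$ are continuous, and one crosses the other exactly once by a monotonicity/convexity check. That crossing point $\epsilon_y$ gives the trichotomy in the statement: two points for $\epsilon_y^\prime < \epsilon_y$, one for $\epsilon_y^\prime = \epsilon_y$, none for $\epsilon_y^\prime > \epsilon_y$. (One must also treat $\epsilon_y^\prime$ small, where $u\to-\infty$: there the two intersection points persist, and as $u\to-\infty$ one has $y\to 0$, $x(s)\to$ a point of $\{y=0\}\cap S^3$, which identifies the closure.)

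For the closure statement, I would take a sequence $s_k = u_k + i v_k$ on the leaf with $u_k\to-\infty$; then $y(s_k) = b e^{s_k}\to 0$ and $x(s_k) = e^{m s_k}(a + b^m s_k)$. Because $e^{m u_k}$ decays exponentially while $a + b^m s_k$ grows only linearly, and because $|x(s_k)|^2 = 1-|y(s_k)|^2\to 1$, the accumulation set of $x(s_k)$ is the whole circle $\{|x|=1,\,y=0\}$ (the phase $e^{im(u_k + iv_k)}(\ldots)$ winds: for $u_k\to-\infty$ along the leaf, $v_k$ must range over an unbounded set to keep $|x|$ on the sphere, so every phase is approached). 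Hence $\overline{L_{(a,b)}} \supseteq L_{(a,b)}\cup(\{y=0\}\cap S^3)$, and since the leaf is otherwise properly embedded away from $\{y=0\}$ (the intersection with each $T^y_{\epsilon_y^\prime}$ being finite), there are no other limit points, giving equality. Finally, that $\{y=0\}\cap S^3$ is the unique closed leaf follows since any leaf through $(a,b)$ with $b\ne0$ has $\{y=0\}\cap S^3$ in its closure but does not contain it, so it cannot be closed.

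The main obstacle I anticipate is the bookkeeping around the existence and uniqueness of the critical radius $\epsilon_y$: one must show cleanly that the function $u\mapsto \big(\min_v |x(u+iv)|^2\big) - \big(1 - |b|^2 e^{2u}\big)$ changes sign exactly once. The first term equals $e^{2mu}\cdot\big(\text{squared distance from }0\text{ to the line }\{a+b^m(u+iv):v\in\mathbb{R}\}\big)$, which can be made explicit and is smooth in $u$; I expect a short convexity/derivative argument, using that $e^{2mu}$ times a quadratic-in-$u$ is compared with $1-|b|^2e^{2u}$, to pin down the single crossing. Everything else is a direct computation from the closed-form solution of the ODE.
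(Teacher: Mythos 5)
Your approach is essentially the paper's own: integrate $\theta_m$ explicitly to get $y(t)=be^{t}$, $x(t)=(a+b^{m}t)e^{mt}$, restrict to $S^3$, split $t=t_R+it_I$, and observe that the sphere constraint is a quadratic in $t_I$ with positive leading coefficient $|b|^{2m}$ (your ``fix $u$, quadratic in $v$'' is the same computation with a harmless rescaling by $e^{2mu}$). The trichotomy, the identification of $\epsilon_y$ with the vanishing of the discriminant, the parametrization of leaves by their farthest point from $\{y=0\}$, and the limit $t_R\to-\infty$ for the closure statement all match the paper. Your phase-winding justification that the closure contains the whole circle $\{y=0\}\cap S^3$ is a bit more explicit than the paper's (which effectively invokes that the closure of a leaf is a union of leaves), but it is fine.

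The genuine gap is the one you yourself flag and then leave open: the existence and \emph{uniqueness} of the critical radius $\epsilon_y$, i.e.\ that the $t_R$-dependent term of the quadratic crosses the discriminant threshold exactly once. You write that you ``expect a short convexity/derivative argument,'' but you neither state nor carry out such an argument, and this is precisely the technical heart of the Proposition --- without it you have only shown that each torus is met in $0$, $1$, or $2$ points, not that the counts are monotone in $\epsilon_y'$. The paper resolves this by a direct computation: it shows the constant term of the quadratic in $t_I$ has positive $t_R$-derivative on $t_R\le 0$ by estimating
\[
2\operatorname{Re}(b^m\bar a)+2(b\bar b)^m t_R+2(1-m)b\bar b\,e^{2(1-m)t_R}+2m\,e^{-2mt_R}
\]
from below using $|a|,|b|\le 1$ (so $|\operatorname{Re}(b^m\bar a)|<1$), and then combines this with the fact that the constant term vanishes at $t_R=0$ (since $(a,b)\in S^3$) and tends to $+\infty$ as $t_R\to+\infty$. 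You need to supply an estimate of this kind --- in your normalization, a monotonicity statement for $u\mapsto\min_v|x(u+iv)|^2-(1-|b|^2e^{2u})$ --- before the proposal counts as a proof.
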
 
\begin{proof}
The holomorphic map 
\[ \lambda_{(a,b)}: \mathbb{C} \rightarrow \mathbb{C}^2,\ t \mapsto ((a + b^mt)e^{mt}, be^t) \]
defines the integral curve of the vector field $(mx + y^m) \frac{\partial}{\partial x} + y\frac{\partial}{\partial y}$ through $\lambda_{(a,b)}(0) = (a,b)$, that is the leaf of $\mathcal{F}_m$ through $(a,b)$. If $(a,b) \in S^3$ the leaf of $\mathcal{F}_m \cap S^3$ through $(a,b)$ is the $\lambda_{(a,b)}$-image of the branch through $t=0$ of the curve in $\mathbb{C}$ implicitely given by
\[ 1 = e^{m(t + \overline{t})}(a\overline{a} + b^m\overline{a}t + a\overline{b}^m\overline{t} + (b\overline{b})^mt\overline{t}) + b\overline{b}e^{t + \overline{t}}. \]
Decomposing $t = t_R + it_I$ into real and imaginary part and rearranging the equation we obtain
\[ \tag{$\ast$} (b\overline{b})^m t_I^2 + 2 \mathrm{Im}(a\overline{b}^m) t_I + a\overline{a} + 2 \mathrm{Re}( b^m\overline{a})
    t_R + (b\overline{b})^m t_R^2 +  b\overline{b}e^{2(1-m)t_R} - e^{-2mt_R} = 0. \]
This is a quadratic equation in $t_I$, with coefficients of $t_I^2$ and $t_I$ independent of $t_R$.

\vspace{0.2cm}

\noindent \textit{Claim.} For $t_R \leq 0$ the constant term of ($\ast$) is increasing with $t_R$.
\begin{proof}
When we derive the constant term with respect to $t_R$ we obtain the gradient
\[ 2 \mathrm{Re}( b^m\overline{a}) + 2 (b\overline{b})^m t_R + 2(1-m)b\overline{b}e^{2(1-m)t_R} + 2me^{-2mt_R} \]
which is $> 2e^{-2mt_R} + 2t_R + 2(1-m)e^{2t_R} - 2$ for $t_R \leq 0$ since $(a,b) \in S^3$ implies $|a|, |b| \leq 1$ and $|\mathrm{Re}( b^m\overline{a})| < 1$. But the function $x \mapsto e^{-2mx} + x + (1-m)e^{2x} - 1$ has derivative $-2me^{-2mx} + 2(1-m)e^{2x} + 1 < 0$ for $x \leq 0$, hence the gradient is always $> 0$ for $t_R \leq 0$, as it is $> 0$ for $t_R = 0$.
\end{proof}

\noindent If $t_R \rightarrow \infty$ the constant term also tends to $\infty$. So we conclude: There exists a $t_0 \geq 0$ such that for all $t_R < t_0$ there are two solutions $t_I$ to the equation ($\ast$) symmetric to $-\frac{\mathrm{Im}(a\overline{b}^m)}{(b\overline{b})^m}$, and one solution $t_I = -\frac{\mathrm{Im}(a\overline{b}^m)}{(b\overline{b})^m}$ if $t_R = t_0$.

\noindent In particular, if $t_R \rightarrow -\infty$ we have $y = be^t \rightarrow 0$ which implies the claim on the closure of $L_{(a,b)}$. Since this leaf intersects a torus $T^y_{\epsilon_y^\prime}$ in all points $(a^\prime, b^\prime)$ of the leaf where $|b^\prime| = \epsilon_y^\prime$ the last claim follows. In particular, the maximal $\epsilon_y$ such that $L_{(a,b)} \cap T^y_{\epsilon_y} \neq \emptyset$ is given by $\epsilon_y = |e^{t_0}b|$.
\end{proof}

\begin{cor} \label{leaf-par-cor}
All the leaves of $\mathcal{F}_m \cap S^3$ away from $\{y = 0\}$ are uniquely parametrised by the points of the set
\[ \{ (a,b) \in S^3: \mathrm{Im}(a\overline{b}^m) = 0, b \neq 0 \}. \]
\end{cor}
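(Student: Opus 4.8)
The plan is to show that every leaf $L$ of $\mathcal{F}_m \cap S^3$ not contained in $\{y=0\}$ meets the set
\[ P := \{(a,b) \in S^3 : \mathrm{Im}(a\overline{b}^m) = 0,\ b \neq 0 \} \]
in exactly one point, so that the assignment $P \to \{\text{leaves of }\mathcal{F}_m\cap S^3\text{ away from }\{y=0\}\}$, $(a,b) \mapsto L_{(a,b)}$, is a bijection.

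First I would track the quantity $\mathrm{Im}(a'\overline{b'}^m)$ along a leaf. Writing a point of $L_{(a,b)}$ as $(a',b') = \lambda_{(a,b)}(t) = ((a+b^m t)e^{mt}, be^t)$ with $t = t_R + i t_I$, a direct computation gives $a'\overline{b'}^m = (a\overline{b}^m + |b|^{2m}t)\,e^{2m t_R}$, hence
\[ \mathrm{Im}(a'\overline{b'}^m) = e^{2m t_R}\bigl(\mathrm{Im}(a\overline{b}^m) + |b|^{2m}\,t_I\bigr). \]
Thus, along $L_{(a,b)}$, the condition $\mathrm{Im}(a'\overline{b'}^m) = 0$ is equivalent to $t_I = -\mathrm{Im}(a\overline{b}^m)/|b|^{2m}$, a value independent of $t_R$.

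Next I would invoke the description of the branch of $(\ast)$ through $t=0$ from the proof of Prop.~\ref{leaf-res-prop}. For $t_R < t_0$ the two solutions $t_I$ of $(\ast)$ are distinct and, by Vieta applied to the quadratic $(\ast)$ (whose roots sum to $-2\mathrm{Im}(a\overline{b}^m)/|b|^{2m}$), symmetric about $-\mathrm{Im}(a\overline{b}^m)/|b|^{2m}$, so neither equals that midpoint; for $t_R = t_0$ the unique solution is exactly $t_I = -\mathrm{Im}(a\overline{b}^m)/|b|^{2m}$; and for $t_R > t_0$ there is no point of $L_{(a,b)}$. Hence the value $t_I = -\mathrm{Im}(a\overline{b}^m)/|b|^{2m}$ occurs on the leaf precisely once, at the parameter with $t_R = t_0$ --- i.e. at the single intersection point of $L_{(a,b)}$ with its maximal torus $T^y_{\epsilon_y}$. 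Since $b' = be^t \neq 0$ everywhere on such a leaf, this point lies in $P$, and it is the only point of $L_{(a,b)}$ in $P$.

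Finally I would conclude that $(a,b)\mapsto L_{(a,b)}$ is surjective (each leaf off $\{y=0\}$ carries its distinguished point, which lies in $P$ and is sent to that leaf) and injective (if $L_{(a_1,b_1)} = L_{(a_2,b_2)}$ with both points in $P$, each is the unique point of the common leaf lying in $P$, hence they coincide). The only genuine computation is the formula for $\mathrm{Im}(a'\overline{b'}^m)$ above together with the identification of $t_I = -\mathrm{Im}(a\overline{b}^m)/|b|^{2m}$ with the critical parameter $t_R = t_0$ of Prop.~\ref{leaf-res-prop}; once that is in hand, the symmetry/uniqueness step follows at once from Vieta, so I do not anticipate a serious obstacle.
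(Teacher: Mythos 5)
Your proof is correct and follows essentially the same route as the paper: both identify $P$ with the set of unique $|y|$-maximal points of the leaves described in Prop.~\ref{leaf-res-prop}, using that $\mathrm{Im}(a\overline{b}^m)=0$ singles out the symmetry axis (equivalently, the double-root value) of the quadratic $(\ast)$ in $t_I$. Your explicit tracking identity $\mathrm{Im}(a'\overline{b'}^m)=e^{2mt_R}\bigl(\mathrm{Im}(a\overline{b}^m)+|b|^{2m}t_I\bigr)$ is a nice supplement that makes the paper's terse ``linear and constant term of $(\ast)$ vanish at $t=0$'' step transparent, but it is the same underlying argument.
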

\begin{proof}
Since there is only one point on a leaf $L_{(a,b)}$ with maximal distance $\epsilon_y(L_{(a,b)})$ to $\{y = 0\}$, these points uniquely parametrise all leaves of $\mathcal{F}_m$ away from $\{y = 0\}$. Furthermore, $(a,b)$ is such a point on $L_{(a,b)}$ if for $t=0$ the linear and constant term of ($\ast$) vanish. This is exactly the case when $\mathrm{Im}(a\overline{b}^m) = 0$ since $a\overline{a} + b\overline{b} = 1$.
\end{proof}

\begin{thm}
The intersection foliations $\mathcal{F}_m \cap S^3$ are not topologically equivalent for different $m = 1, 2, \ldots$.
\end{thm}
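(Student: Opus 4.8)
The plan is to reduce to the holomorphic foliation germs via the Reconstruction Theorem~\ref{topeq-intersec-fol-thm} and then to separate them by the topological conjugacy class of the holonomy of the unique closed leaf. Since each $\mathcal{F}_m$ is, up to rescaling $x$, one of the normalized germs of Rem.~\ref{dim2-fol-rem}(3) and (4), a topological equivalence of $\mathcal{F}_m\cap S^3$ and $\mathcal{F}_{m'}\cap S^3$ would (by the direction of Thm.~\ref{topeq-intersec-fol-thm} going back to Arnol$'$d) force $\mathcal{F}_m$ and $\mathcal{F}_{m'}$ to be topologically equivalent foliation germs at $0\in\mathbb{C}^2$. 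Now $\{y=0\}$ is the only separatrix of $\mathcal{F}_m$: it is invariant since the first component of $\theta_m$ is $mx+y^m$, while the second eigendirection $\{x=0\}$ is destroyed by the resonant term $y^m$ (and for $m=1$ the linear part is a single Jordan block). Moreover the separatrix is topologically singled out among the leaves: from $\lambda_{(a,b)}(t)=((a+b^mt)e^{mt},be^t)$ one checks that for $b\neq 0$ the map $\lambda_{(a,b)}$ is injective and that $\lambda_{(a,b)}^{-1}(B^{4}_{\epsilon})$ is simply connected for small $\epsilon$, so every non-separatrix leaf meets a small ball in a simply connected set, whereas the separatrix leaf $\{y=0\}\setminus\{0\}$ meets it in an annulus. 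Hence any topological equivalence $\phi\colon\mathcal{F}_m\to\mathcal{F}_{m'}$ carries the separatrix of $\mathcal{F}_m$ onto that of $\mathcal{F}_{m'}$ and therefore induces a topological conjugacy between the holonomy germs $h_m$ and $h_{m'}^{\pm1}$ of the closed leaves (the exponent $\pm1$ according to whether $\phi$ preserves the generator of $\pi_1(\{y=0\}\setminus\{0\})$).

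Next I would compute $h_m$ on the transversal $\{x=1\}$, with coordinate $y$. Running the integral curve through $(1,y_0)$ once around the puncture of $\{y=0\}$ means solving $(1+y_0^mt)e^{mt}=1$ for $t$ near $2\pi i/m$, i.e. $mt=2\pi i-\log(1+y_0^mt)$; substituting $t=2\pi i/m+O(y_0^m)$ gives $t=\frac{2\pi i}{m}-\frac{2\pi i}{m^2}y_0^m+O(y_0^{2m})$, and hence
\[
h_m(y)=e^{2\pi i/m}\,y-\frac{2\pi i}{m^2}\,e^{2\pi i/m}\,y^{m+1}+O(y^{2m+1}).
\]
So $h_m'(0)=e^{2\pi i/m}$ is a primitive $m$-th root of unity, there are no terms of degrees $2,\dots,m$, and the first nonlinear term has degree $m+1$ with nonzero coefficient; consequently $h_m^{\circ m}(y)=y+c_m y^{m+1}+O(y^{m+2})$ with $c_m\neq 0$, a parabolic germ of multiplicity $m+1$ (this matches the holonomy computation in \cite[27.C]{IY08}).

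Finally I would read off the invariant. By the Leau--Fatou flower theorem $h_m^{\circ m}$ has exactly $m$ attracting petals, pairwise disjoint near $0$, and their union is the local basin of attraction of $0$ for $h_m^{\circ m}$; this set is also the local basin of $0$ for $h_m$, since a point converges to $0$ under $h_m$ if and only if it does under $h_m^{\circ m}$. Thus the local basin of $0$ for $h_m$ has exactly $m$ connected components in a punctured neighbourhood of $0$. This number is manifestly a topological conjugacy invariant, and it is unchanged on passing to $h_m^{-1}$ (whose local basin is the union of the $m$ repelling petals of $h_m^{\circ m}$); since $h_m$ is topologically conjugate to $h_{m'}^{\pm1}$ we conclude $m=m'$, a contradiction. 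I expect the only subtle point to be the (standard) foliation-theoretic bookkeeping in the first paragraph --- that a topological equivalence of the foliations really induces a topological conjugacy of the separatrix holonomy germs on a topological transversal, once the separatrix has been recognized as topologically distinguished; the computation of $h_m$ and the petal count are then elementary.
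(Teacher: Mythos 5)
Your proof is correct, but it takes a genuinely different route from the paper's. The paper works entirely on the sphere: it uses the uniqueness of the closed leaf $\{y=0\}\cap S^3$ (Prop.~\ref{leaf-res-prop}) to extract a tubular-neighbourhood invariant -- the set $V_1$ of leaves trapped in a thin solid torus around the closed leaf has exactly $m$ connected components, each retracting to a curve of type $(m,1)$ -- and then compares the two foliations via the induced maps on $H_1(S^3\setminus\{y=0\})\cong\mathbb{Z}$, concluding $m_1=m_2$ from a divisibility argument. Your argument instead uses the easy (Arnol$'$d) direction of Thm.~\ref{topeq-intersec-fol-thm} to pass back to the germs $\mathcal{F}_m$, identifies the separatrix as topologically distinguished, derives a topological conjugacy $h_m\sim h_{m'}^{\pm1}$ of the separatrix holonomies, computes $h_m(y)=e^{2\pi i/m}y-\tfrac{2\pi i}{m^2}e^{2\pi i/m}y^{m+1}+O(y^{2m+1})$ so that $h_m^{\circ m}$ is parabolic of multiplicity $m+1$, and reads off $m$ from the Leau--Fatou flower. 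The paper's route is self-contained and uses only elementary algebraic topology of the trace foliation; yours leans on the classical theory of parabolic germ dynamics but is arguably more conceptual, and it ties in directly with the holonomy computation in \cite[27.C]{IY08} that the authors mention in the introduction as available but unused.

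Two points deserve to be made explicit rather than called ``manifest.'' First, that the number of connected components of the local basin is a topological conjugacy invariant of the germ is Camacho's topological classification of parabolic germs (a parabolic germ of multiplicity $k+1$ is topologically conjugate to the time-one map of $z^{k+1}\partial_z$, so $k$ is invariant), applied to the $\mathrm{lcm}(m,m')$-th iterates; alternatively, one can invoke Naishul's theorem that the multiplier on the unit circle is a topological conjugacy invariant, giving $e^{2\pi i/m}=e^{\pm 2\pi i/m'}$ directly. Either way, a citation is needed. Second, the claim that $\lambda_{(a,b)}^{-1}(B^4_\epsilon)$ is simply connected for $b\neq0$ is true (for fixed $\mathrm{Re}\,t$ the defining inequality is a concave-up quadratic in $\mathrm{Im}\,t$, so the slices are intervals and the set is vertically convex) but deserves a line of justification; a cleaner route to the same conclusion is to observe that $\{y=0\}\cap S^3$ is the unique \emph{closed} leaf of the trace foliation by Prop.~\ref{leaf-res-prop}, which already forces any topological equivalence to preserve the separatrix.
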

\begin{proof}
Assume that $\Phi: S^3 \rightarrow S^3$ is a topological equivalence of $\mathcal{F}_{m_1} \cap S^3$ with $\mathcal{F}_{m_2} \cap S^3$. Then $\Phi$ maps the only closed leaf of $\mathcal{F}_{m_1}$ to the only closed leaf of $\mathcal{F}_{m_2}$, that is, $\{y = 0\} \cap S^3$ to itself. Hence $\Phi$ maps the open complement $U_1 := S^3 - \bigcup_{0 \leq \epsilon_y \leq \epsilon_1} T^y_{\epsilon_y}$ of the solid torus $\bigcup_{0 \leq \epsilon_y \leq \epsilon_1} T^y_{\epsilon_y}$ to an open set $\Phi(U_1)$ in $S^3$ not intersecting $\{y = 0\} \cap S^3$ but  containing $\{x = 0\} \cap S^3$ if $\epsilon_1$ is small enough, by a compactness argument.

\noindent Let $\overline{U_1}$ be the union of all leaves of  $\mathcal{F}_{m_1} \cap S^3$  intersecting $U_1$. Then the complement $V_1 := S^3 - (\overline{U_1} \cup \{y = 0\})$ consists of leaves of the foliation $\mathcal{F}_{m_1} \cap S^3$. Cor.~\ref{leaf-par-cor} shows that these leaves are uniquely parametrised by points of $\{\mathrm{Im}(a\overline{b}^{m_1}) = 0\} \cap \bigcup_{0 < \epsilon_y \leq \epsilon_1} T^y_{\epsilon_y}$. 

\noindent Note that for $0 < \epsilon_y < 1$ the intersection  $\{\mathrm{Im}(a\overline{b}^m) = 0\} \cap T^y_{\epsilon_y}$ consists of $m$ connected curves given by $m\mathrm{arg}(b) - \mathrm{arg}(a) \in \pi \cdot \mathbb{Z}$ on the torus $T^y_{\epsilon_y}$, each of them of homology class $(m,1)$ with respect to the generating cycles $\{\mathrm{arg}(x) = 0\} \cap T^y_{\epsilon_y}$ and $\{\mathrm{arg}(y) = 0\} \cap T^y_{\epsilon_y}$. These curves are visualized in Figure~\ref{fig2} when $m=2$, as the red and the blue curve on the torus $T^y_{\epsilon_y}$ cut up along a disk $D^y_t$. Hence 
$\{\mathrm{Im}(a\overline{b}^m) = 0\} \cap \bigcup_{0 < \epsilon_y \leq \epsilon_1} T^y_{\epsilon_y}$ has $m$ connected components, and all of them can be retracted to a curve of homology class $(m,1)$ in $T^y_{\epsilon_1}$. Since $S^3 - \{y=0\}$ can be retracted to $S^3 \cap \{x=0\}$, the homology class of this curve in $S^3 - \{y=0\}$ is $m$ times the generator represented by $S^3 \cap \{x=0\}$.

\noindent The flow on $S^3$ associated to $\mathcal{F}_{m_1}$ induces a retraction of $V_1$ to $\{\mathrm{Im}(a\overline{b}^{m_1}) = 0\} \cap \bigcup_{0 < \epsilon_y \leq \epsilon_1} T^y_{\epsilon_y}$, hence $V_1$ consists of $m_1$ connected components $V_1^\prime, \ldots, V_1^{(m_1)}$. These components are visualized in Figure~\ref{fig2} when $m_1=2$, as the two regions enclosed by the red and the blue surfaces in the cut-up solid torus $\bigcup_{0 \leq \epsilon_y \leq \epsilon_1} T^y_{\epsilon_y}$. By construction, $\Phi(V_1)$ does not intersect the complement of a solid torus, $U_2 := S^3 - \bigcup_{0 \leq \epsilon_y \leq \epsilon_2} T^y_{\epsilon_y}$ if $\epsilon_2$ is close enough to $1$. Constructing $V_2$ from $U_2$ using $\mathcal{F}_{m_2}$ as $V_1$ was constructed from $U_1$ using $\mathcal{F}_{m_1}$, this implies $\Phi(V_1) \subset V_2$, and $\Phi(V_1^\prime)$ lies in one of the $m_2$ connected components of $V_2$, say $V_2^\prime$.

\noindent Consequently, we have a commutative diagram of homeomorphisms and embeddings,
\[ \begin{array}{ccccc}
    V_1^\prime & \stackrel{\Phi}{\rightarrow} & \Phi(V_1^\prime) & \subset & V_2^\prime \\
    \cap & & \cap &  & \cap \\
    S^3 - \{y=0\} & \stackrel{\Phi}{\rightarrow} & S^3 - \{y=0\} & =  & S^3 - \{y=0\}. \\
    \end{array} \]
This diagram induces the commutative diagram of group homomorphisms of homology group
\[ \begin{array}{ccccc}
    \rule{.5cm}{0cm} \mathbb{Z}  & \stackrel{\cdot \pm 1}{\longrightarrow} & \mathbb{Z} & \rightarrow & \mathbb{Z} \rule{.5cm}{0cm} \\
    \cdot m_1 \downarrow & & & & \downarrow \cdot m_2 \\
    \rule{.5cm}{0cm} \mathbb{Z} & = & \mathbb{Z} & = & \mathbb{Z} \rule{.5cm}{0cm}
    \end{array} \]
The left and right vertical homomorphism are given by multiplications with $m_1$ and $m_2$ because of the retractions constructed above, whereas the upper right homomorphism is given by multiplication with an arbitrary integer $n$.

\noindent Consequently, we obtain $\pm m_1 = \pm n \cdot m_2$, hence $m_1 \geq m_2$. Exchanging the roles of $m_1$ and $m_2$ we also obtain $m_1 \leq m_2$ and therefore $m_1 = m_2$.
\end{proof}

\begin{figure}
	\centering
    \includegraphics[width=0.8\textwidth]{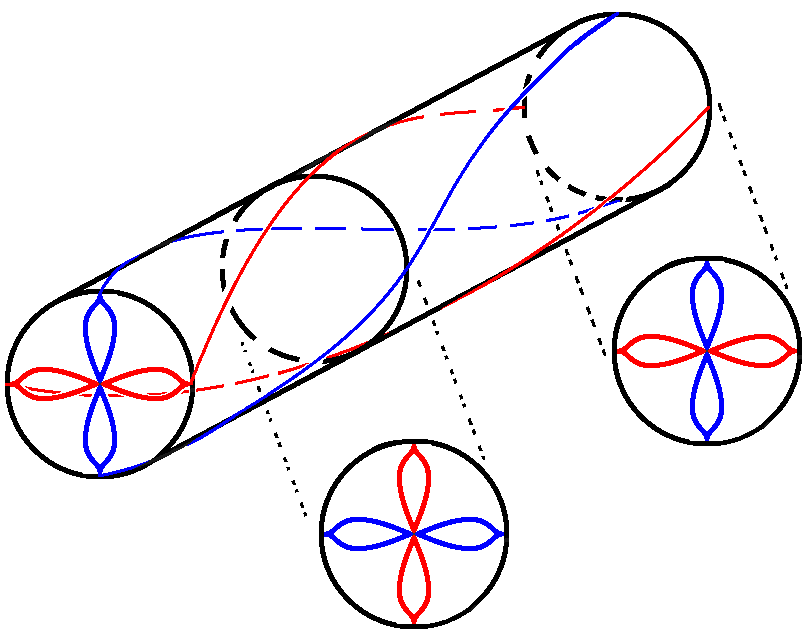}
  \caption{}
	\label{fig2}
\end{figure}

\begin{rem} \label{not-GT-rem}
The holomorphic foliation germs $\mathcal{F}_m$ discussed in this section are not of general type, in the terminology of \cite{MaMa12}: One feature of plane holomorphic foliation germs of general type is that the singularities of the reduction are represented by vector fields without a linear part with eigenvalue $0$. But from $(mx + y^m) \frac{\partial}{\partial x} + y\frac{\partial}{\partial y}$ respectively the holomorphic $1$-form $y dx - (mx + y^m) dy$ representing the same holomorphic foliation germ we obtain $1$-forms resp.\ vector fields
\[ t(1-m+t^mx^{m-1})dx - (mx+t^mx^m)dt\ \mathrm{resp.\ } 
   (mx+t^mx^m) \frac{\partial}{\partial x} + t(1-m+t^mx^{m-1}) \frac{\partial}{\partial t} \]
in the $(x,t)$-chart with $(x,y) = (x,xt)$ and
\[ y ds + (s(1-m) + y^{m-1}) dy\ \mathrm{resp.\ } y \frac{\partial}{\partial y} + ((m-1)s + y^{m-1}) \frac{\partial}{\partial s} \]
in the $(s,y)$-chart with $(x,y) = (sy,y)$, by blowing up $\mathbb{C}^2$ in $0$.

\noindent If $m = 1$ the blown-up foliation in the $(x,t)$-chart is represented by $x \frac{\partial}{\partial x} + t^2 \frac{\partial}{\partial t}$, yielding a reduced singularity in $(x,t) = (0,0)$ but not one of general type. 

\noindent If $m \geq 2$ the blown-up foliation has a singularity of type $\mathcal{F}_{m-1}$ in $(s,y) = (0,0)$. Thus further reducing this singularity will finally lead to another reduced singularity not of general type. 
\end{rem}

\section{The non-resonant case of $\mathbb{R}$-linearly dependent eigenvalues in dimension $2$}

\noindent In this section, we only consider holomorphic foliation germs $\mathcal{F}_{\lambda}$ around $0 \in \mathbb{C}^2$ represented by vector fields of the form 
\[  \lambda x \frac{\partial}{\partial x} + y\frac{\partial}{\partial y},\ \lambda \in \mathbb{R}_{>0}. \]
As in Section~\ref{nonreal-sec} these foliation germs are invariant under rescaling with positive real constants. Hence it is enough to consider the real intersection foliations $\mathcal{F}_{\lambda} \cap S^3_1 = \mathcal{F}_{\lambda} \cap S^3$.
\begin{lem} \label{real-leaf-lem}
Every leaf of the intersection foliation $\mathcal{F}_\lambda \cap S^3$ lies on a torus $T^x_{\epsilon_x}$, $0 \leq \epsilon_x \leq 1$.
\end{lem}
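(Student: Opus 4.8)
The plan is to exhibit the tangent direction of $\mathcal{F}_\lambda \cap S^3$ explicitly and to observe that it annihilates the function $|x|^2$. The natural candidate is the imaginary part $\mathrm{Im}\,\theta := \frac{1}{2i}(\theta - \overline{\theta})$ of the holomorphic vector field $\theta = \lambda x\frac{\partial}{\partial x} + y\frac{\partial}{\partial y}$; its real-time flow is $s \mapsto (xe^{i\lambda s}, ye^{is})$, i.e.\ the restriction of the leaf flow $t \mapsto (xe^{\lambda t}, ye^{t})$ to purely imaginary times, and it is a nowhere-vanishing vector field on $S^3$ (it vanishes only at $0$).

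First I would record the same kind of $1$-form computations used in Lemmata~\ref{torus-transv-lem} and~\ref{disk-transv-lem}. For a real function $g$ one has $dg(\mathrm{Re}\,\theta) = \mathrm{Re}\,(dg(\theta))$ and $dg(\mathrm{Im}\,\theta) = \mathrm{Im}\,(dg(\theta))$, since $dg$ is a real $1$-form. Taking $g = x\overline{x} + y\overline{y}$ gives $dg(\theta) = \lambda|x|^2 + |y|^2 \in \mathbb{R}_{>0}$ on $S^3$ (here we use $\lambda \in \mathbb{R}_{>0}$); hence $\mathrm{Re}\,\theta$ is transversal to $S^3$ — which is why the intersection foliation $\mathcal{F}_\lambda \cap S^3$ exists — while $\mathrm{Im}\,\theta$ is tangent to $S^3$. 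Taking $g = x\overline{x}$ gives $dg(\theta) = \lambda|x|^2 \in \mathbb{R}$, so $\mathrm{Im}\,\theta$ is moreover tangent to every torus $T^x_{\epsilon_x}$; equivalently, $\mathrm{Im}\,\theta$ annihilates the function $|x|^2$.

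I would then conclude as follows. At each $p = (x,y) \in S^3$ the real tangent plane $T_pL$ of the leaf $L$ of $\mathcal{F}_\lambda$ through $p$ is the real span of $\mathrm{Re}\,\theta(p)$ and $\mathrm{Im}\,\theta(p)$, and the tangent line of $\mathcal{F}_\lambda \cap S^3$ at $p$ is $T_pL \cap T_pS^3$; since $\mathrm{Re}\,\theta(p) \notin T_pS^3$ this intersection is exactly one-dimensional, and since $\mathrm{Im}\,\theta(p)$ is a nonzero element of it, the tangent line equals $\mathbb{R}\cdot\mathrm{Im}\,\theta(p) \subset T_pT^x_{|x|}$. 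Therefore $|x|^2$ is locally, hence (by connectedness) globally, constant along each leaf of $\mathcal{F}_\lambda \cap S^3$, so every leaf is contained in a single torus $T^x_{\epsilon_x}$, $0 \le \epsilon_x \le 1$. (Equivalently: the leaf through $p$ is exactly the orbit $\{(xe^{i\lambda s}, ye^{is}) : s \in \mathbb{R}\}$ of the flow above, which visibly lies on $T^x_{|x|}$.) The only point requiring a little care is the bookkeeping that identifies $\mathrm{Im}\,\theta(p)$ as a \emph{spanning} vector of the intersection line rather than merely an element of $T_pL$ — this is immediate from $\mathrm{Re}\,\theta(p) \notin T_pS^3$ — and I do not anticipate any substantive obstacle.
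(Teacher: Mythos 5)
Your proposal is correct and rests on the same observation as the paper's proof: for $\lambda\in\mathbb{R}_{>0}$, the restriction of the leaf flow $t\mapsto(xe^{\lambda t},ye^{t})$ to purely imaginary times $t=is$ is tangent to $S^3$ and preserves $|x|$, so the intersection leaf through $(x,y)$ is the orbit $\{(xe^{i\lambda s},ye^{is})\}$ on the torus $T^x_{|x|}$. The paper simply writes down this parametrisation and reads off the conclusion, while you additionally verify via the $1$-form computations (in the style of Lemmata~\ref{torus-transv-lem}, \ref{disk-transv-lem}) that $\mathrm{Im}\,\theta$ spans the intersection distribution and annihilates $|x|^2$; the extra bookkeeping is sound and not a different argument.
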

\begin{proof}
The flow of the vector field $\lambda x \frac{\partial}{\partial x} + y\frac{\partial}{\partial y}$ is given by $(a,b,t) \mapsto (ae^{\lambda t}, be^{t})$. Since $\lambda \in \mathbb{R}_{>0}$ the intersection of the associated integral manifold through a point $(a,b) \in S^3$ with $S^3$ is parametrised by $t \mapsto (ae^{\lambda i t}, be^{it})$. Thus the leaf of $\mathcal{F}_\lambda \cap S^3$ through $(a,b)$ lies on the torus $T_x(|a|)$.
\end{proof}

\subsection{$\lambda \in \mathbb{Q}_{>0}$} \label{rational-ssec}

Assume that $\lambda = \frac{p}{q}$, where $p, q \in \mathbb{N}$ are relatively prime.
\begin{prop} \label{nonres-lin-leaf-prop}
Every leaf of the intersection foliation $\mathcal{F}_\lambda \cap S^3$ is closed. A leaf on the torus $T_x(\epsilon_x)$, $0 < \epsilon_x  < 1$, is a curve of type $(p,q)$, where $p$ describes the winding number of the leaf around $\{x = 0\} \cap S^3$ and $q$ the winding number around $\{y = 0\} \cap S^3$. The holonomy in a point $(0, e^{it}) \in \{x = 0\} \cap S^3$ following the leaf in counter-clockwise direction is given by the germ of the map $D^t_x(\epsilon_x) \rightarrow D^t_x(\epsilon_x)$, $0 < \epsilon_x \ll 1$, multiplying the $x$-coordinate by $e^{2\pi i \cdot \frac{p}{q}}$. Similarly, the holonomy of the leaf in a point $(e^{it}, 0)$ following the leaf in counter-clockwise direction is described by the germ of the map $D^t_y(\epsilon_y) \rightarrow D^t_y(\epsilon_y)$ multiplying the $y$-coordinate with $e^{2\pi i \cdot \frac{q}{p}}$.

\noindent The holonomy in all points of $S^3$ away from $\{x = 0\} \cup \{y = 0\}$ is the identity.
\end{prop}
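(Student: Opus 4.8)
The plan is to exploit the explicit flow $(a,b,t)\mapsto(ae^{\lambda t},be^t)$ found in Lemma~\ref{real-leaf-lem} and restrict it to the sphere via $t\mapsto(ae^{\lambda it},be^{it})$. First I would note that by Lemma~\ref{real-leaf-lem} every leaf sits on a fixed torus $T^x_{\epsilon_x}$, so the leaf through $(a,b)$ with $0<|a|<1$ is the image of the real line under $t\mapsto(ae^{\lambda it},be^{it})$. When $\lambda=p/q$ in lowest terms, this parametrisation is periodic with period $2\pi q$: at $t=2\pi q$ the $x$-coordinate returns by $e^{2\pi i p}=1$ and the $y$-coordinate by $e^{2\pi i q}=1$; no smaller positive $t$ does this since $\gcd(p,q)=1$. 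Hence each such leaf is a closed embedded circle, and by inspecting $\arg$ of each coordinate as $t$ runs over $[0,2\pi q]$ one sees the $x$-argument increases by $2\pi p$ and the $y$-argument by $2\pi q$, which is exactly the statement that the leaf is a $(p,q)$-curve on the torus, with winding number $p$ around $\{x=0\}\cap S^3$ and $q$ around $\{y=0\}\cap S^3$.

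Next I would compute the two boundary holonomies. On $\{x=0\}\cap S^3$ the leaf is the circle $t\mapsto(0,be^{it})$, which closes up in parameter-time $2\pi$; a transversal is the disk $D^t_x(\epsilon_x)$ obtained by letting the $x$-coordinate vary a little while keeping $\arg y=t$ fixed. Flowing a nearby point for parameter-time $2\pi$ (the time it takes the basepoint on $\{x=0\}$ to return) sends the $x$-coordinate to $e^{2\pi i\lambda}x=e^{2\pi i p/q}x$ and leaves the base circle fixed; this is the claimed holonomy germ $x\mapsto e^{2\pi i p/q}x$ on $D^t_x(\epsilon_x)$. The computation on $\{y=0\}\cap S^3$ is symmetric but the closing time is now $2\pi/\lambda=2\pi q/p$ (the parameter-time for $e^{\lambda i t}$ to complete one turn), giving $y\mapsto e^{2\pi i t}y$ at $t=2\pi q/p$, i.e.\ multiplication by $e^{2\pi i q/p}$ on $D^t_y(\epsilon_y)$; I should double-check the direction ("counter-clockwise") is consistent with the sign conventions fixed in Section~\ref{intsectfol-sec}, which is a bookkeeping matter rather than a real difficulty.

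Finally, for the holonomy at a point $p_0=(a,b)$ with $ab\neq 0$: the leaf through $p_0$ is a closed curve of period $2\pi q$ as above, and a natural transversal is a small disk inside some $D^x_{t,\epsilon_x}$-type slice (or the torus $T^x_{\epsilon_x}$ intersected with a small disk transverse to the leaf inside it). The key observation is that the whole $S^1\times S^1$-action $(x,y)\mapsto(xe^{is_1},ye^{is_2})$ — here with $s_1=\lambda s$, $s_2=s$ running along the flow — acts by isometries permuting the leaves, and flowing once around sends a transversal disk to itself by a map that, at the level of the foliated structure near the closed leaf, is conjugate to the time-$2\pi q$ map of the linear flow restricted to the normal bundle; since the normal directions are spanned by changing $|a|$ (staying on other tori, where the return is again after the same parameter-time $2\pi q$) the return map is the identity. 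I would phrase this as: the holomorphic foliation is the orbit foliation of a $\mathbb C^\ast$-action $(x,y)\mapsto(t^p x,t^q y)$, all of whose orbits on $S^3\setminus(\{x=0\}\cup\{y=0\})$ are circles with trivial normal monodromy, so the holonomy is trivial there. The main obstacle I anticipate is not any single computation but assembling a clean, coordinate-free description of the transversals and checking that the period/closing-time bookkeeping (factors of $p$ versus $q$, clockwise versus counter-clockwise) is internally consistent with the orientation conventions adopted for $\Phi_{\mathcal F}$ in Section~\ref{intsectfol-sec}; everything else reduces to reading off arguments from the explicit exponential parametrisation.
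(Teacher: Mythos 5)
Your proof is correct and takes essentially the same approach as the paper's: read off periodicity, winding numbers and holonomies directly from the explicit exponential parametrisation of the trace flow on $S^3$. The only cosmetic difference is that the paper rescales to the representative $p\,x\,\frac{\partial}{\partial x} + q\,y\,\frac{\partial}{\partial y}$, making the period uniformly $2\pi$, whereas you keep $\lambda x\,\frac{\partial}{\partial x} + y\,\frac{\partial}{\partial y}$ and carry the factor $q$ through; your version supplies the computations the paper elides with ``the claims of the proposition follow.''
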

\begin{proof}
$\mathcal{F}_{\lambda}$ is also represented by the vector field $p  x \frac{\partial}{\partial x} + q y \frac{\partial}{\partial y}$. The flow of this vector field is given by $(a,b,t) \mapsto (ae^{p t}, be^{q t})$, and the intersection of the associated integral manifold through $(a,b)$ with $S^3$ is parametrised as $t \mapsto (ae^{pit}, be^{qit})$, $t \in \mathbb{R}$. These parametrisations are periodic, with period $\frac{2\pi}{\gcd(p,q)} = 2\pi$. The claims of the proposition follow. 
\end{proof}

\begin{cor}
Two foliation germs $\mathcal{F}_{\lambda}$, $\mathcal{F}_{\mu}$, $\lambda, \mu \in \mathbb{Q}_{>0}$, are topologically equivalent if, and only if $\lambda = \mu$ or $= \frac{1}{\mu}$.
\end{cor}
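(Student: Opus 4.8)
\noindent The plan is to reduce everything to the intersection foliations on $S^3$ via the Reconstruction Theorem~\ref{topeq-intersec-fol-thm}, and then to read off a complete topological invariant of $\mathcal{F}_\lambda\cap S^3$ from the holonomy data supplied by Proposition~\ref{nonres-lin-leaf-prop}. Throughout, write $\lambda=p/q$ and $\mu=p'/q'$ in lowest terms.

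\noindent For the direction asserting that $\lambda=\mu$ or $\lambda=1/\mu$ implies topological equivalence there is nothing to prove when $\lambda=\mu$. When $\lambda=1/\mu$, I would observe that $\mathcal{F}_{1/\lambda}$ is also represented by $x\frac{\partial}{\partial x}+\lambda y\frac{\partial}{\partial y}$ (multiply the vector field by the unit $\lambda$), and that the linear coordinate swap $\sigma\colon(x,y)\mapsto(y,x)$ is a biholomorphism of $\mathbb{C}^2$ fixing $0$ which transforms this vector field into $\lambda x\frac{\partial}{\partial x}+y\frac{\partial}{\partial y}$; hence $\sigma$ is a holomorphic, and in particular a topological, equivalence of $\mathcal{F}_{1/\lambda}=\mathcal{F}_\mu$ with $\mathcal{F}_\lambda$.

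\noindent For the converse, suppose $\Phi\colon S^3\to S^3$ is a topological equivalence of $\mathcal{F}_\lambda\cap S^3$ with $\mathcal{F}_\mu\cap S^3$; by the Reconstruction Theorem it is enough to deduce that $\{p,q\}=\{p',q'\}$ as unordered pairs, for then $\mu=p'/q'\in\{p/q,q/p\}=\{\lambda,1/\lambda\}$. By Proposition~\ref{nonres-lin-leaf-prop} every leaf of $\mathcal{F}_\lambda\cap S^3$ is closed, the only leaves carrying non-trivial holonomy are $\{x=0\}\cap S^3$ and $\{y=0\}\cap S^3$, and their holonomy germs are the rotations $z\mapsto e^{2\pi i p/q}z$ and $z\mapsto e^{2\pi i q/p}z$; since $\gcd(p,q)=1$ these are periodic homeomorphism germs of $(\mathbb{R}^2,0)$ of exact periods $q$ and $p$ respectively, while every other leaf has holonomy of period $1$. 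The crucial point I would invoke is the standard fact that a foliation-preserving homeomorphism carries a transversal to a closed leaf onto a transversal to the image leaf and conjugates the two first-return maps, so that the exact period of the holonomy germ of a closed leaf is a topological invariant (conjugation by a homeomorphism germ preserves the least $k$ with $h^k=\mathrm{id}$). Consequently $\Phi$ restricts to a period-preserving bijection between the at most two leaves of $\mathcal{F}_\lambda\cap S^3$ with holonomy of period $>1$ and those of $\mathcal{F}_\mu\cap S^3$, so the multiset of holonomy periods exceeding $1$ is the same for $\lambda$ and $\mu$. Since $\gcd(p,q)=1$, this multiset is $\varnothing$ when $p=q=1$, the singleton $\{\max(p,q)\}$ when exactly one of $p,q$ equals $1$, and the two-element set $\{p,q\}$ when $p,q\ge 2$; in every case it determines the unordered pair $\{p,q\}$ (respectively $\{1,1\}$, $\{1,\max(p,q)\}$, $\{p,q\}$). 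Hence $\{p,q\}=\{p',q'\}$, as required.

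\noindent The step I expect to require the most care is the claim that the exact period of a leaf's holonomy germ is preserved by topological equivalence: one must set up holonomy as a well-defined germ of homeomorphisms of a $2$-disk (independent, up to conjugacy, of the chosen transversal and of the base point on the closed leaf) and check that $\Phi$ really induces such a germ fixing the relevant point — a routine but genuinely topological argument, since none of the differential-geometric tools used elsewhere in the paper is available on $S^3$ here. An alternative and perhaps more economical packaging of the same invariant is the leaf space of the Seifert fibration $\mathcal{F}_\lambda\cap S^3$, which is the orbifold $S^2$ with cone points of orders $p$ and $q$: a topological equivalence induces an orbifold homeomorphism of leaf spaces and therefore preserves the unordered collection of cone orders.
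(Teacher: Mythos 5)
Your argument is correct and follows essentially the same route as the paper: reduce to the intersection foliations on $S^3$ via the Reconstruction Theorem, then use Proposition~\ref{nonres-lin-leaf-prop} together with the topological invariance of the order of holonomy germs along closed leaves to recover the unordered pair $\{p,q\}$. You are somewhat more explicit than the paper, which simply asserts the holonomy-order invariance, whereas you flag the (genuinely nontrivial, if standard) step of verifying that a foliated homeomorphism conjugates holonomy first-return germs, carry out the case analysis when $p$ or $q$ equals $1$, and note the alternative Seifert-fibration/orbifold packaging; these are welcome clarifications but not a different method.
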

\begin{proof}
By the Reconstruction Theorem~\ref{topeq-intersec-fol-thm} we only have to decide whether the intersection foliations $\mathcal{F}_{\lambda} \cap S^3$ and $\mathcal{F}_{\mu} \cap S^3$ are topologically equivalent or not. Now, the topological types of the holonomy along closed paths on leaves of these real foliation are topologically invariant, in particular the order of the holonomy germ. Consequently, Prop.~\ref{nonres-lin-leaf-prop} implies that only $\mathcal{F}_{\frac{p}{q}} \cap S^3$ and $\mathcal{F}_{\frac{q}{p}} \cap S^3$ can be topologically equivalent, and in that case the equivalence is given by $(x,y) \mapsto (y,x)$.
\end{proof}

\subsection{$\lambda \in \mathbb{R}_{>0} - \mathbb{Q}_{>0}$} \label{irrational-ssec}

As in the proof of Lemma~\ref{real-leaf-lem} the leaf of the intersection foliation $\mathcal{F}_\lambda \cap S^3$ through a point $(a,b) \in S^3$ is parametrised by $t \mapsto (ae^{i \lambda t}, be^{it})$, hence lies on $T_x(|a|)$. Since $\lambda$ is irrational the leaf is not closed but dense on the torus $T_x(|a|)$, for all $a \in \mathbb{C}$ such that $0 < |a| < 1$. Thus we can describe the leaves of $\mathcal{F}_\lambda \cap S^3$ as follows: 
\begin{lem} \label{irrat-leaf-lem}
The intersection foliation $\mathcal{F}_\lambda \cap S^3$ has two closed leaves, $\{x = 0\} \cap S^3$  and $\{y = 0\} \cap S^3$, whereas the closure of every other leaf is a torus $T_x(\epsilon_x)$, $0 < \epsilon_x < 1$. \hfill $\Box$
\end{lem}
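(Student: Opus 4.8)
The plan is to read everything off the explicit parametrisation $t \mapsto (ae^{i\lambda t}, be^{it})$ of the leaf of $\mathcal{F}_\lambda \cap S^3$ through a point $(a,b) \in S^3$ recalled just above, together with Lemma~\ref{real-leaf-lem}, and then to invoke the classical structure of linear flows on the $2$-torus.

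First I would dispose of the two coordinate axes. For $(0,b) \in S^3$ one has $|b| = 1$, and the leaf through $(0,b)$ is the image of $t \mapsto (0, be^{it})$, which is exactly the circle $\{x=0\} \cap S^3$; symmetrically, the leaf through any $(a,0) \in S^3$ is the circle $\{y=0\} \cap S^3$. Both are compact, hence closed subsets of $S^3$, so they are closed leaves.

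Second, I would treat a point $(a,b) \in S^3$ with $a \neq 0 \neq b$, equivalently $0 < \epsilon_x := |a| < 1$. By Lemma~\ref{real-leaf-lem} the leaf lies on $T_x(\epsilon_x)$, and since $0 < \epsilon_x < 1$ this set is an embedded $2$-torus, parametrised by $(\arg x, \arg y) \in \mathbb{R}^2/2\pi\mathbb{Z}^2$. In these coordinates the parametrisation of the leaf becomes the orbit $s \mapsto (\arg a + \lambda s,\ \arg b + s) \bmod 2\pi\mathbb{Z}^2$ of the linear flow of slope $\lambda$. As $\lambda$ is irrational, this flow is minimal on $T_x(\epsilon_x)$ by the Kronecker--Weyl theorem, so the orbit, and hence the leaf, is dense in $T_x(\epsilon_x)$. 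Therefore the closure of the leaf is $T_x(\epsilon_x)$; in particular the leaf is not a closed subset of $S^3$, so the two circles found above are the only closed leaves.

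The argument is essentially bookkeeping once the parametrisation is in hand, and the only nontrivial ingredient, density of an irrational linear flow on $T^2$, is classical; I do not expect a genuine obstacle. The single point deserving a word of care is the claim that $T_x(\epsilon_x)$ is an embedded $2$-torus precisely for $0 < \epsilon_x < 1$ (it degenerates to a circle at the two endpoints $\epsilon_x = 0, 1$), which is what makes the dichotomy ``closed circle versus dense in a $2$-torus'' both exhaustive and mutually exclusive.
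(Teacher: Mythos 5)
Your proof is correct and follows exactly the paper's implicit argument: the paper states the lemma with a closing box after the preceding discussion, which already notes the parametrisation $t \mapsto (ae^{i\lambda t}, be^{it})$ and invokes density of irrational linear flows on the torus $T_x(|a|)$. Your write-up simply fills in the details (closedness of the axis circles, identification of $T_x(\epsilon_x)$ with $\mathbb{R}^2/2\pi\mathbb{Z}^2$) that the paper leaves to the reader.
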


\noindent Next, we consider the continuous map $f: S^3 \rightarrow [0,1], (x,y) \mapsto |x|$. Its fibers are $f^{-1}(\epsilon_x) = T_x(\epsilon_x)$, $0 \leq \epsilon_x \leq 1$. Lemma~\ref{irrat-leaf-lem} shows that a topological equivalence $\Phi$ of $\mathcal{F}_\lambda \cap S^3$ with $\mathcal{F}_\mu \cap S^3$, $\lambda, \mu \in \mathbb{R}_{>0} - \mathbb{Q}_{>0}$, induces a homeomorphism $\phi: [0,1] \rightarrow [0,1]$ such that $\phi \circ f = f \circ \Phi$, with $\phi(\{0,1\}) = \{0,1\}$. Note that $\phi(0) = 0$ and $\phi(1) = 1$ means that $\Phi$ maps the closed leaves $\{x = 0\} \cap S^3$  resp. $\{y = 0\} \cap S^3$ onto themselves, whereas $\phi(0) =1$, $\phi(1) = 0$ indicates that $\Phi$ interchanges the closed leaves.

\noindent Furthermore, $\Phi$ maps the torus $T_x(\epsilon_x)$ homeomorphically onto the torus $T_x(\phi(\epsilon_x))$, $0 < \epsilon_x < 1$. Recall that the (extended) mapping class group of a $2$-dimensional torus $T^2 \cong S^1 \times S^1$ is given by $GL(H_1(T^2),\mathbb{Z})$ \cite[Thm.2.5]{FM12}. Identifying the tori $T_x(\epsilon_x)$ for different $0 < \epsilon_x < 1$ by rescaling the $x$- and the $y$-coordinate the following statement makes sense:
\begin{prop} \label{map-class-prop}
If $\Phi: S^3 \rightarrow S^3$ is a topological equivalence of $\mathcal{F}_\lambda \cap S^3$ with $\mathcal{F}_\mu \cap S^3$, $\lambda, \mu \in \mathbb{R}_{>0} - \mathbb{Q}_{>0}$ then the restriction $\Phi_{|T_x(\epsilon_x)}: T_x(\epsilon_x) \rightarrow T_x(\phi(\epsilon_x))$ is of one of the types $\left( \begin{array}{cc} \pm 1 & 0 \\ 0 & \pm 1 \end{array} \right)$ or $\left( \begin{array}{cc} 0 & \pm 1 \\ \pm 1 & 0 \end{array} \right)$ in the mapping class group of a $2$-dimensional torus, for all $0 < \epsilon_x < 1$.
\end{prop}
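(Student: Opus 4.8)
The plan is to analyze which homeomorphisms of the torus $T_x(\epsilon_x)$ can arise as restrictions of a topological equivalence, using the invariant structure carried by the two closed leaves and the foliated tori. First I would record that by the preceding discussion $\Phi$ maps each torus $T_x(\epsilon_x)$, $0<\epsilon_x<1$, homeomorphically onto $T_x(\phi(\epsilon_x))$, so the restriction defines a well-defined class in the extended mapping class group $GL(H_1(T^2),\mathbb{Z}) = GL(2,\mathbb{Z})$ after the rescaling identification of all intermediate tori. Write this matrix as $\left(\begin{smallmatrix} a & b \\ c & d \end{smallmatrix}\right)$ with respect to the basis given by the meridian $\{\mathrm{arg}(x)=0\}\cap T_x(\epsilon_x)$ (which bounds a disk $D^x_{t,\epsilon_x}$ on the side of $\{x=0\}$) and the longitude $\{\mathrm{arg}(y)=0\}\cap T_x(\epsilon_x)$. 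The leaves of $\mathcal{F}_\lambda\cap S^3$ on this torus are, by the parametrization $t\mapsto(ae^{i\lambda t}, be^{it})$, lines of irrational slope $\lambda$, and their closure is all of $T_x(\epsilon_x)$; the image leaves must be dense lines of slope $\mu$ on $T_x(\phi(\epsilon_x))$.

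The key steps are as follows. (1) Since $\Phi$ carries the foliation by these irrational-slope lines to the analogous foliation, and since on a torus the isotopy class of a linear irrational foliation is sent to another linear irrational foliation only by linear maps, the matrix must send the (real) slope direction to the (real) slope direction — but because $\lambda,\mu$ are irrational this gives no arithmetic constraint by itself; the real constraint comes from the \emph{degenerating} behaviour of the tori as $\epsilon_x\to 0$ or $\epsilon_x\to 1$. (2) As $\epsilon_x\to 1$, the torus $T_x(\epsilon_x)$ bounds a solid torus neighborhood of the closed leaf $\{y=0\}\cap S^3$ in which the meridian (the curve bounding a disk transverse to $\{y=0\}$) is $\{\mathrm{arg}(x)=0\}\cap T_x(\epsilon_x)$; similarly as $\epsilon_x\to 0$ the meridian of the solid torus neighborhood of $\{x=0\}\cap S^3$ is $\{\mathrm{arg}(y)=0\}\cap T_x(\epsilon_x)$. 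A homeomorphism of $S^3$ preserving the decomposition must send meridians of a solid torus to meridians of the image solid torus (the meridian is characterized homologically as the curve that dies in the solid torus). (3) Hence: if $\Phi$ fixes the two closed leaves ($\phi(0)=0$, $\phi(1)=1$) then the matrix must fix, up to sign, both basis elements, giving $\left(\begin{smallmatrix} \pm1 & 0\\ 0 & \pm1\end{smallmatrix}\right)$; if $\Phi$ swaps the closed leaves ($\phi(0)=1$, $\phi(1)=0$) then it must interchange the two basis elements up to sign, giving $\left(\begin{smallmatrix} 0 & \pm1\\ \pm1 & 0\end{smallmatrix}\right)$. (4) Finally check that the matrix class is independent of $\epsilon_x$: it varies continuously in the discrete group $GL(2,\mathbb{Z})$ over the connected interval $0<\epsilon_x<1$, hence is constant, so the two limiting constraints (from $\epsilon_x\to0$ and $\epsilon_x\to1$) both apply to the same matrix — this is exactly what forces the off-diagonal (or diagonal) entries to vanish and pins down the list.

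I expect the main obstacle to be step (2): making precise the claim that $\Phi$ sends meridians to meridians. One must first argue that $\Phi$ actually extends the homeomorphism of tori to a homeomorphism of the two solid tori $\{|x|\le\epsilon_x\}$ and $\{|x|\ge\epsilon_x\}$ (for instance by noting that $\Phi$ is a homeomorphism of $S^3$ inducing $\phi$ on the $|x|$-coordinate, so it restricts to homeomorphisms of these sub-solid-tori onto $\{|x|\le\phi(\epsilon_x)\}$ resp. $\{|x|\ge\phi(\epsilon_x)\}$), and then invoke that the meridian of a solid torus is the unique (up to sign and isotopy) primitive class in $H_1(T^2)$ that bounds in the solid torus, so it is a homeomorphism invariant of the pair (solid torus, boundary). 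Once that is in hand, the homological bookkeeping with the basis $\{\mathrm{arg}(x)=0\}$, $\{\mathrm{arg}(y)=0\}$ is routine, and the connectedness argument in step (4) closes the proof.
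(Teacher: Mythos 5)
Your proposal is correct and uses essentially the same argument as the paper: both exploit the genus-one Heegaard decomposition of $S^3$ preserved by $\Phi$, and both extract the diagonal/anti-diagonal form of the matrix from the homological constraint that a homeomorphism of a solid torus must respect the kernel (meridian) and cokernel (longitude) of $H_1(\partial V)\to H_1(V)$; the paper phrases this via $\pi_1$ of the two solid tori and images of $(p,q)$-curves, you via meridians, but these are dual bookkeepings of the same fact. One small inefficiency: your step (4) is unnecessary. You frame the two constraints as coming from the degenerations $\epsilon_x\to 0$ and $\epsilon_x\to 1$ and then invoke constancy of the matrix in $\epsilon_x$ to combine them, but in fact for every fixed $\epsilon_x\in(0,1)$ the torus $T_x(\epsilon_x)$ simultaneously bounds both solid tori $\{|x|\le\epsilon_x\}$ and $\{|x|\ge\epsilon_x\}$, and $\Phi$ carries each onto the corresponding solid torus for $\phi(\epsilon_x)$, so both homological constraints already apply to $\Phi_{|T_x(\epsilon_x)}$ at that single $\epsilon_x$ with no limiting or continuity argument required.
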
 
\begin{proof}
Interchanging the coordinates yields a homeomorphism $\Psi: S^3 \rightarrow S^3, (x,y) \mapsto (y,x)$ whose restriction to tori $T_x(\epsilon_x)$ is of type $\left( \begin{array}{cc} 0 & 1 \\ 1 & 0 \end{array} \right)$ in the mapping class group of a $2$-dimensional torus. Composing $\Psi$ with a topological equivalence $\Phi$ of $\mathcal{F}_\lambda \cap S^3$ with $\mathcal{F}_\mu \cap S^3$ such that $\phi(0) = 1$, $\phi(1) = 0$ yields a topological equivalence $\Phi^\prime$  of $\mathcal{F}_\lambda \cap S^3$ with $\mathcal{F}_{\frac{1}{\mu}} \cap S^3$ such that $\phi^\prime(0) = 0$, $\phi^\prime(1) = 1$. Hence, from now on we will only consider that case.

\noindent For all $0 < \epsilon_0 < 1$ the topological equivalence $\Phi$ maps the solid torus $\bigcup_{0 \leq \epsilon_x \leq \epsilon_0} T_x(\epsilon_x)$ homeomorphically onto the  solid torus $\bigcup_{0 \leq \epsilon_x \leq \epsilon_0} T_x(\phi(\epsilon_x))$
and  $\bigcup_{\epsilon_0 \leq \epsilon_x \leq 1} T_x(\epsilon_x)$ onto $\bigcup_{\epsilon_0 \leq \epsilon_x \leq 1} T_x(\phi(\epsilon_x))$, always mapping the tori $T_x(\epsilon_x)$ onto $T_x(\phi(\epsilon_x))$. The fundamental groups of these solid tori are generated by $L_x := \{x = 0\} \cap S^3$ resp. $L_y := \{y = 0\} \cap S^3$, and a curve of type $(p,q)$ on the torus $T_x(\epsilon_x)$ (for the notation, see Prop.~\ref{nonres-lin-leaf-prop}) is mapped to the class of $q \cdot L_x$ resp. $p \cdot L_y$ by the inclusion into the solid tori. Consequently, the homeomorphism $\Phi$ must map a curve of type $(p,q)$ on $T_x(\epsilon_x)$ to a curve of type $(\pm p, \pm q)$ on $T_x(\phi(\epsilon_x))$. This implies the claim on the isotopy classes of $\Phi_{|T_x(\epsilon_x)}$.
\end{proof}

\noindent To finally classify the holomorphic foliation germs $\mathcal{F}_\lambda$, $\lambda \in \mathbb{R}_{>0} - \mathbb{Q}_{>0}$, we consider Kronecker foliations $F_\lambda$, $\lambda \in \mathbb{R}_{>0}$, on the $2$-dimensional torus $T^2 = S^1 \times S^1$. These foliations are given by the orbits of the flow
\[ t \cdot_\lambda (e^{ia}, e^{ib}) = (e^{i(a+\lambda t)}, e^{i(b+t)}),\ t, a, b \in \mathbb{R}. \]

\begin{prop} \label{Kron-prop}
Two Kronecker foliations $F_\lambda$ and $F_\mu$, $\lambda, \mu \in \mathbb{R}$, are topologically equivalent if $\mu = \frac{a\lambda + b}{c\lambda + d}$, where $\left( \begin{array}{cc} a & b \\ c & d \end{array} \right) \in GL(2,\mathbb{Z})$.
\end{prop}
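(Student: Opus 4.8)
The plan is to exhibit the topological equivalence as the torus automorphism induced by the matrix itself. Write $M = \left( \begin{array}{cc} a & b \\ c & d \end{array} \right) \in GL(2,\mathbb{Z})$ and consider the universal covering $\pi \colon \mathbb{R}^2 \to T^2$, $\pi(u,v) = (e^{iu}, e^{iv})$, with deck transformation group $2\pi\mathbb{Z}^2$. Since $M$ has integer entries and is invertible over $\mathbb{Z}$, the linear map $M \colon \mathbb{R}^2 \to \mathbb{R}^2$ maps $2\pi\mathbb{Z}^2$ bijectively onto itself, hence descends to a real-analytic diffeomorphism $\bar M \colon T^2 \to T^2$ whose inverse is the map induced by $M^{-1} \in GL(2,\mathbb{Z})$.

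Next I would describe the leaves upstairs. By the definition of the Kronecker flow, the leaf of $F_\lambda$ through $(e^{iu_0}, e^{iv_0})$ lifts under $\pi$ to the family of lines $\{ (u_0 + \lambda t + 2\pi k,\, v_0 + t + 2\pi \ell) : t \in \mathbb{R} \}$, $k,\ell \in \mathbb{Z}$, that is, to all lines of direction $v_\lambda := (\lambda, 1)$ through the $2\pi\mathbb{Z}^2$-orbit of $(u_0,v_0)$. Thus $\pi$ carries the (deck-invariant) foliation $\mathcal{L}_\lambda$ of $\mathbb{R}^2$ by all lines of direction $v_\lambda$ onto $F_\lambda$, and likewise $\mathcal{L}_\mu$ onto $F_\mu$.

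The heart of the argument is then a one-line linear computation: $M$ sends the line through $p$ with direction $v_\lambda$ to the line through $Mp$ with direction $M v_\lambda = (a\lambda + b,\, c\lambda + d)$. Whenever $c\lambda + d \neq 0$ — which is exactly what is needed for $\mu = \frac{a\lambda+b}{c\lambda+d}$ to be defined, and which holds automatically when $\lambda$ is irrational, since $c\lambda + d = 0$ with $c,d \in \mathbb{Z}$ would force $c = d = 0$, contradicting $\det M = \pm 1$ — the vector $M v_\lambda$ is a nonzero real multiple of $(\mu, 1) = v_\mu$. As the leaves are unoriented the sign of that multiple is irrelevant, so $M$ carries $\mathcal{L}_\lambda$ onto $\mathcal{L}_\mu$. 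Passing to the quotient, $\bar M$ maps the leaves of $F_\lambda$ onto the leaves of $F_\mu$, which is precisely the asserted topological equivalence.

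I do not anticipate a genuine obstacle here; the only points that ask for a word of care are the bookkeeping that $M$ really descends to a self-homeomorphism of $T^2$ (which is nothing but the hypothesis $M \in GL(2,\mathbb{Z})$) and the excluded direction $c\lambda + d = 0$, which either falls outside the hypothesis, since then $\mu$ is undefined, or cannot occur for the irrational $\lambda$ relevant to this section. For later use in classifying the germs $\mathcal{F}_\lambda$ it is worth recording already now that $\bar M$ is in fact a diffeomorphism, and that on each torus $T_x(\epsilon_x)$ it realises exactly the mapping class element $M$, which will have to be matched against the constraint of Proposition~\ref{map-class-prop}.
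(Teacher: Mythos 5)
Your proof is correct and essentially identical to the paper's: the map $\bar M$ you construct on $T^2$ by descending the linear map $M$ from the universal cover is exactly the paper's $\phi_Q\colon (e^{ix},e^{iy}) \mapsto (e^{i(ax+by)}, e^{i(cx+dy)})$, and your observation that $M$ sends lines of direction $(\lambda,1)$ to lines of direction $(a\lambda+b, c\lambda+d) \parallel (\mu,1)$ is the lifted form of the paper's computation $\phi_Q(t\cdot_\lambda P) = s\cdot_\mu\phi_Q(P)$ with $s=(c\lambda+d)t$. The only marginal addition is your explicit remark that $c\lambda+d\neq 0$, which the paper leaves implicit in the hypothesis that $\mu$ is defined.
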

\begin{proof}
Let $Q := \left( \begin{array}{cc} a & b \\ c & d \end{array} \right) \in GL(2,\mathbb{Z})$. Then 
\[ \phi_Q: T^2 \rightarrow T^2, (e^{ix}, e^{iy}) \mapsto (e^{i(ax+by)}, e^{i(cx+ dy)}) \] 
is a homeomorphism with inverse map $\phi_{Q^{-1}}$. Since for $s = (c\lambda + d)t$,
\begin{eqnarray*}
\phi_Q(t \cdot_\lambda (e^{ix}, e^{iy})) & = & (e^{i(ax + by + (a\lambda + b)t)}, e^{i(cx + dy + (c\lambda + d)t)}) = \\
                                                                   & = & (e^{i(ax + by + \mu s)}, e^{i(cx + dy + s)}) = \\
                                                                   & = & s \cdot_\mu \phi_Q(e^{ix}, e^{iy}),
\end{eqnarray*}
$\phi_Q$ is a topological equivalence of $F_\lambda$ and $F_\mu$.
\end{proof}

\noindent If  $\lambda, \mu \in \mathbb{R}_{>0} - \mathbb{Q}_{>0}$ the converse is also true, as the following theorem shows: 
\begin{thm} \label{Kron-hom-thm}
Let $\phi: T^2 \rightarrow T^2$ be a topological equivalence of Kronecker foliations $F_\lambda$ and $F_\mu$, $\lambda, \mu \in \mathbb{R}_{>0} - \mathbb{Q}_{>0}$. If $\phi$ has the homotopy type $\left( \begin{array}{cc} a & b \\ c & d \end{array} \right)$ in the mapping class group $GL(2,\mathbb{Z})$ of $T^2$ then $\mu = \frac{a\lambda + b}{c\lambda + d}$.
\end{thm}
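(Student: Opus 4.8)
The plan is to lift everything to the universal cover and read the \emph{direction} of a leaf off the induced linear map. Realize $T^2 = S^1\times S^1$ as $\mathbb{R}^2/2\pi\mathbb{Z}^2$ via the covering $\pi(x,y)=(e^{ix},e^{iy})$. From the defining flow of $F_\lambda$ we see that the connected components of $\pi^{-1}(\text{leaf of }F_\lambda)$ are precisely the straight lines of direction $(\lambda,1)$, and likewise the lifted leaves of $F_\mu$ are the level sets of the linear functional $g(x,y):=x-\mu y$ (so $\ker g=\mathbb{R}(\mu,1)$). Choose a continuous lift $\widetilde\phi\colon\mathbb{R}^2\to\mathbb{R}^2$ of $\phi$, i.e.\ $\pi\circ\widetilde\phi=\phi\circ\pi$. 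Since $\phi$ has homotopy type $Q:=\left(\begin{smallmatrix}a&b\\c&d\end{smallmatrix}\right)$ in the sense of Proposition~\ref{Kron-prop}, one has $\widetilde\phi(v+2\pi w)=\widetilde\phi(v)+2\pi Qw$ for all $w\in\mathbb{Z}^2$ (this is just the description of the $\pi_1$-action of $\phi$), so $E(v):=\widetilde\phi(v)-Qv$ is $2\pi\mathbb{Z}^2$-periodic, hence bounded on $\mathbb{R}^2$ by compactness of $T^2$.

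The heart of the argument is the claim that $g\circ\widetilde\phi$ is \emph{constant} along every line $\ell$ of direction $(\lambda,1)$. Indeed, $L:=\pi(\ell)$ is a leaf of $F_\lambda$; because $\phi$ is a topological equivalence, $\phi(L)$ is a single leaf of $F_\mu$, hence $\widetilde\phi(\ell)\subseteq\pi^{-1}(\phi(L))$. Writing $\phi(L)=\pi(\ell')$ for a line $\ell'$ of direction $(\mu,1)$ with $g|_{\ell'}\equiv c_0$, we get $g\bigl(\pi^{-1}(\phi(L))\bigr)=\{c_0+2\pi(m-\mu n):(m,n)\in\mathbb{Z}^2\}$, a countable, hence totally disconnected, subset of $\mathbb{R}$. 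Since $\ell$ is connected and $g\circ\widetilde\phi$ is continuous with image in this set, it must be constant on $\ell$. I expect this to be the main technical point: a priori $\widetilde\phi$ could smear the line $\ell$ across infinitely many lifted $F_\mu$-leaves, and ruling that out is exactly this connectedness argument — and it is here that the irrationality of $\mu$ is used, through the density (and consequent total disconnectedness) of that value set.

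Finally, parametrize $\ell$ by $s\mapsto p_0+s(\lambda,1)$. By linearity of $g$ and of $Q$,
\[ g\bigl(\widetilde\phi(p_0+s(\lambda,1))\bigr)=g(Qp_0)+s\,g\bigl(Q(\lambda,1)^{\mathsf T}\bigr)+g\bigl(E(p_0+s(\lambda,1))\bigr), \]
where the first summand is constant in $s$ and the third is bounded in $s$; constancy of the left side therefore forces $g\bigl(Q(\lambda,1)^{\mathsf T}\bigr)=0$, i.e.\ $Q(\lambda,1)^{\mathsf T}\in\ker g=\mathbb{R}(\mu,1)$. Since $Q\in GL(2,\mathbb{Z})$ and $(\lambda,1)\neq 0$, the vector $Q(\lambda,1)^{\mathsf T}=(a\lambda+b,\;c\lambda+d)^{\mathsf T}$ is a \emph{nonzero} multiple $r(\mu,1)$ of $(\mu,1)$, whence $c\lambda+d=r\neq 0$ and $\mu=\dfrac{a\lambda+b}{c\lambda+d}$, as asserted. (That $c\lambda+d\neq 0$ also follows directly from $\lambda\notin\mathbb{Q}$ together with $(c,d)\neq(0,0)$, which is automatic since $Q$ is invertible.) Combined with Proposition~\ref{Kron-prop}, this shows that the $\mu$ with $F_\mu$ topologically equivalent to $F_\lambda$ form exactly the $GL(2,\mathbb{Z})$-orbit of $\lambda$ under the Möbius action, the acting matrix being the homotopy type of the equivalence.
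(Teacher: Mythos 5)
Your proof is correct, and it takes a genuinely different route from the one in the paper. The paper first reduces to the case where $\phi$ is isotopic to the identity (by precomposing with $\phi_{Q^{-1}}$ from Prop.~\ref{Kron-prop}), then recovers the slope $\lambda$ as $\lim_{\epsilon\to 0}\frac{[\gamma_{P,\epsilon}^{(\lambda)}]\cdot[C_2]}{[\gamma_{P,\epsilon}^{(\lambda)}]\cdot[C_1]}$, where $\gamma_{P,\epsilon}^{(\lambda)}$ is a long segment of a leaf closed up by a short arc, and finally transports these intersection numbers through $\phi$ (which acts trivially on $H_1$) to obtain $\mu$. Your argument instead works directly on the universal cover: you write $\widetilde\phi = Q + E$ with $E$ periodic hence bounded, observe that $g\circ\widetilde\phi$ is a continuous map from the connected lifted leaf $\ell$ into the totally disconnected set $g\bigl(\pi^{-1}(\phi(L))\bigr)$ and is therefore constant, and then read off $g\bigl(Q(\lambda,1)^{\mathsf T}\bigr)=0$ from the fact that the bounded term $g(E(\cdot))$ cannot absorb an unbounded linear drift in $s$. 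This is more elementary and self-contained: it avoids the intersection-number limit (and the associated counting estimates), handles arbitrary homotopy type $Q$ at once rather than reducing to the identity, and is in the spirit of the classical ``lift and compare slopes modulo bounded error'' argument from rotation-number theory. One small remark: the total-disconnectedness step does not actually require $\mu$ irrational (for rational $\mu$ the value set is discrete), so your parenthetical attribution of the role of irrationality to that step slightly overstates matters; the hypothesis is really only needed so that the situation is nondegenerate in the first place.
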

\begin{proof}
First of all, we may assume that $\left( \begin{array}{cc} a & b \\ c & d \end{array} \right) = \left( \begin{array}{cc} 1 & 0 \\ 0 & 1 \end{array} \right)$, that is, $\phi$ is isotopic to the identity: If not, Prop.~\ref{Kron-prop} shows that $F_\mu$ is topologically equivalent to $F_{Q^{-1} \cdot \mu}$ where $Q^{-1}$ is the inverse matrix of $Q = \left( \begin{array}{cc} a & b \\ c & d \end{array} \right)$. Furthermore, the topological equivalence $\phi_{Q^{-1}}$ is of homotopy type $Q^{-1} \in GL(2, \mathbb{Z})$, so the topological equivalence $\phi_{Q^{-1}} \circ \phi$ between $F_\lambda$ and $F_{Q^{-1} \cdot \mu}$ is of homotopy type $\left( \begin{array}{cc} 1 & 0 \\ 0 & 1 \end{array} \right)$. Consequently, if we show that $\lambda = Q^{-1} \cdot \mu$, then as claimed
\[ \mu = Q \cdot \lambda = \frac{a\lambda + b}{c\lambda + d}. \]

\noindent For a given $\lambda \in \mathbb{R}_{>0} - \mathbb{Q}_{>0}$ and a point $P = (e^{ia}, e^{ib}) \in T^2$, let
\[ L_P^{(\lambda)} := \{ (e^{i(a + \lambda t)}, e^{i(b + t)}) | t \in \mathbb{R} \} \subset T^2 \]
be the leaf of $F_{\lambda}$ through $P$. Following ideas from ergodic theory we express the "slope" of the leaf $L_P^{(\lambda)}$ as a quotient of its topological intersection numbers with two curves representing generators of $H_1(T^2, \mathbb{Z})$. To this purpose we need arbitrarily long pieces of the leaf $L_P^{(\lambda)}$ starting in $P$ and ending in $P^\prime$ arbitrarily close to $P$. Here, we measure distances on $T^2$ using the metric induced by the Euclidean metric on the universal covering $\mathbb{R}^2$. 

\noindent So consider the preimage $p^{-1}(B_P(\epsilon))$ of a ball $B_P(\epsilon)$, $0 < \epsilon \ll 1$ under the parametrisation $p: \mathbb{R} \rightarrow L_P^{(\lambda)} \subset T^2$ given by $t \mapsto (e^{i(a + \lambda t)}, e^{i(b + t)})$. Since $\lambda$ is irrational, $L_P^{(\lambda)}$ is dense in $T^2$, hence $p^{-1}(B_P(\epsilon))$ consists of infinitely many intervals in arbitrarily large distances to $0 \in \mathbb{R}$. One of the intervals in $p^{-1}(B_P(\epsilon))$, say $I_0$, contains $0$, whereas the images of all the other intervals have a non-zero distance to $P$. In particular, if $\epsilon \rightarrow 0$ then the boundaries of all the intervals not containing $0$ tend to $\pm \infty$. This observation holds for the intervals in the preimage of an arbitrary neighborhood basis of $P$. 

\noindent Let $I_1$ be the interval in $p^{-1}(B_P(\epsilon))$ closest to the right to $I_0$. As indicated in Figure~\ref{fig3} we can construct a closed path $\gamma_{P,\epsilon}^{(\lambda)}: [0,1] \rightarrow T^2$ starting and ending in $P$, by following the leaf $L_P^{(\lambda)}$ to a point $P^\prime \in p(I_1)$ and connecting $P^\prime$ to $P$ by a path inside $B_P(\epsilon)$.

\begin{figure}
	\centering

\begin{picture}(130,120)(-10,-10)
\put (0,0){\line(1,0){105}}
\put (0,10){\line(0,1){90}}
\put (0,100){\line(1,0){105}}
\put (105,0){\line(0,1){100}}

\put (45,0){\line(-2,1){10}}
\put (45,0){\line(-2,-1){10}}
\put (0,50){\line(-1,-2){5}}
\put (0,50){\line(1,-2){5}}
\put (65,100){\line(-2,1){10}}
\put (65,100){\line(-2,-1){10}}
\put (105,50){\line(-1,-2){5}}
\put (105,50){\line(1,-2){5}}

\qbezier(0,15)(15,15)(15,0)
\put (7,-2){\line(0,1){8}}
\put (2,-10){\small $B_P(\epsilon)$}

\put (90,80){\line(1,0){20}}
\put (111,75){\small $\gamma_{P,\epsilon}^{(\lambda)}$}

\put (0,10){\circle*{2}}
\put (-10,6){\small $P^\prime$}
\put (105,10){\circle*{2}}
\put (107,6){\small $P^\prime$}
\put (0,0){\circle*{2}}
\put (-10,-6){\small $P$}

\color{red}
\put (0,0){\line(1,2){50}}
\put (50,0){\line(1,2){50}}
\put (100,0){\line(1,2){5}}
\put (105,0){\line(0,1){10}}
\put (0,0){\line(0,1){10}}
\end{picture}
   
  \caption{}
	\label{fig3}
\end{figure}

\noindent Note that the homotopy class of $\gamma_{P,\epsilon}^{(\lambda)}$ depends neither on the choice of $P^\prime$ nor on the path connecting $P^\prime$ and $P$. Hence we can even construct a smoothly embedded  path in that way. By construction, this path covers arbitrarily long segments of the leaf $L_P^{(\lambda)}$ if $\epsilon$ is small enough.

\noindent Next, set $C_1 := \{ (e^{it}, 1) : t \in \mathbb{R} \}$ and $C_2 := \{ (1, e^{is}) : s \in \mathbb{R} \}$. The closed curves $C_1, C_2 \subset T^2$ represent generators $[C_1], [C_2] \in H_1(T^2, \mathbb{Z})$ intersecting exactly once in the point $(1,1) \in T^2$. Let $[\gamma_{P,\epsilon}^{(\lambda)}] \in H_1(T^2, \mathbb{Z})$ denote the homological $1$-class represented by $\gamma_{P,\epsilon}^{(\lambda)}$, and consider the topological intersection numbers $[\gamma_{P,\epsilon}^{(\lambda)}] \cdot [C_i]$ (see~\cite[14.6]{SZAT}).

\noindent \textit{Claim:} $\lambda = \lim_{\epsilon \rightarrow 0} \frac{[\gamma_{P,\epsilon}^{(\lambda)}] \cdot [C_2]}{[\gamma_{P,\epsilon}^{(\lambda)}] \cdot [C_1]}$.
\begin{proof}
We calculate the intersection numbers using their differential-topological interpretation, for smoothly embedded paths $\gamma_{P,\epsilon}^{(\lambda)}$ (see~\cite[5.2]{Hir94}). Since $L_P^{(\lambda)}$ intersects $C_1$ and $C_2$ everywhere with the same orientation, we just need to count the intersection points in $\gamma_{P,\epsilon}^{(\lambda)} \cap C_i$. Assuming for the moment that $P \not\in C_1 \cup C_2$, for small enough $\epsilon$ we only need to count the intersection points of the part of $\gamma_{P,\epsilon}^{(\lambda)}$ lying on $L_P^{(\lambda)}$ with $C_i$. This part is the image $p([0, b_\epsilon])$ of an interval $[0, b_\epsilon] \subset \mathbb{R}$ under the parametrisation $p: \mathbb{R} \rightarrow L_P^{(\lambda)}$ introduced above. Then
\[ \left[ \frac{\lambda b_\epsilon}{2\pi}\right] \leq \left| p([0, b_\epsilon]) \cap C_2 \right| \leq  \left[ \frac{\lambda b_\epsilon}{2\pi}\right] + 1\ \mathrm{and} \left[ \frac{b_\epsilon}{2\pi}\right] \leq \left| p([0, b_\epsilon]) \cap C_1 \right| \leq  \left[ \frac{b_\epsilon}{2\pi}\right] + 1, \]
where $[x]$ denotes the maximal integer $\leq x \in \mathbb{R}$ and $ \left| p([0, b_\epsilon]) \cap C_i \right|$ the number of intersection points of $p([0, b_\epsilon])$ and $C_i$. As discussed above, $b_\epsilon \rightarrow \infty$ if $\epsilon \rightarrow 0$, and the claim follows.

\noindent If $P \in C_1 \cup C_2$ the path in $\gamma_{P,\epsilon}^{(\lambda)}$ connecting $P^\prime$ with $P$ can be chosen to intersect $C_i$ only in a number of points bounded from above independently of $\epsilon$. Hence the claim also holds in that case.
\end{proof}

\noindent Now, we calculate:
\[ \lambda = 
\lim_{\epsilon \rightarrow 0} \frac{[\gamma_{P,\epsilon}^{(\lambda)}] \cdot [C_2]}{[\gamma_{P,\epsilon}^{(\lambda)}] \cdot [C_1]} = \lim_{\epsilon \rightarrow 0} \frac{[\phi(\gamma_{P,\epsilon}^{(\lambda)})] \cdot [\phi(C_2)]}{[\phi(\gamma_{P,\epsilon}^{(\lambda)})] \cdot [\phi(C_1)]} = 
\lim_{\epsilon \rightarrow 0} \frac{[\phi(\gamma_{P,\epsilon}^{(\lambda)})] \cdot [C_2]}{[\phi(\gamma_{P,\epsilon}^{(\lambda)})] \cdot [C_1]}, \]
by the Claim and since $\phi:T^2 \rightarrow T^2$ is a homeomorphism assumed to be homotopic to the identity. But $\phi(L_P^{(\lambda)}) = L_{\phi(P)}^{(\mu)}$, hence $\phi(\gamma_{P,\epsilon}^{(\lambda)})$ is a path constructed as above for the leaf $L_{\phi(P)}^{(\mu)}$ of $F_\mu$ and the neighborhood basis $U_\epsilon := \phi(B_P(\epsilon))$ of $\phi(P)$, so the above limit is equal to
\[ \lim_{\epsilon \rightarrow 0} \frac{[\gamma_{\phi(P),U_\epsilon}^{(\mu)}] \cdot [C_2]}{[\gamma_{\phi(P),U_\epsilon}^{(\mu)}] \cdot [C_1]} = \mu, \]
once again by the Claim.
\end{proof}

\begin{thm}
Two holomorphic foliation germs $\mathcal{F}_\lambda, \mathcal{F}_\mu$, $\mu, \lambda \in \mathbb{R}_{>0} - \mathbb{Q}_{>0}$, are topologically equivalent if, and only if $\lambda = \mu$ or $= \frac{1}{\mu}$.
\end{thm}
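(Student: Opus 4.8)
The plan is to reduce the statement, via the Reconstruction Theorem~\ref{topeq-intersec-fol-thm}, to a question about the traces of the two intersection foliations on the invariant tori $T_x(\epsilon_x)$, and then to feed this into Theorem~\ref{Kron-hom-thm} on Kronecker foliations. The ``if'' direction is easy: $\lambda=\mu$ is trivial, and for $\lambda=1/\mu$ one uses the coordinate swap $\Psi\colon (x,y)\mapsto(y,x)$. Indeed $\mathcal{F}_{1/\mu}$ is represented by $\frac{1}{\mu}x\frac{\partial}{\partial x}+y\frac{\partial}{\partial y}$, hence also by $x\frac{\partial}{\partial x}+\mu y\frac{\partial}{\partial y}$, whose push-forward under $\Psi$ is $\mu x\frac{\partial}{\partial x}+y\frac{\partial}{\partial y}$, a representative of $\mathcal{F}_\mu$; so $\Psi$ is a holomorphic, in particular topological, equivalence $\mathcal{F}_{1/\mu}\simeq\mathcal{F}_\mu$.

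For the ``only if'' direction I would start from a topological equivalence $\Phi\colon S^3\to S^3$ of $\mathcal{F}_\lambda\cap S^3$ with $\mathcal{F}_\mu\cap S^3$; by Theorem~\ref{topeq-intersec-fol-thm} this is all that needs to be analyzed. By Lemma~\ref{irrat-leaf-lem} the only closed leaves of each intersection foliation are $\{x=0\}\cap S^3$ and $\{y=0\}\cap S^3$, while every other leaf has some torus $T_x(\epsilon_x)$, $0<\epsilon_x<1$, as its closure. Hence $\Phi$ permutes the two closed leaves and carries each $T_x(\epsilon_x)$ homeomorphically onto $T_x(\phi(\epsilon_x))$ for a homeomorphism $\phi\colon[0,1]\to[0,1]$ with $\phi(\{0,1\})=\{0,1\}$. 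Composing $\Phi$ with $\Psi|_{S^3}$ if necessary only replaces $\mu$ by $1/\mu$ — a replacement under which the assertion is invariant — and turns $\phi$ into a homeomorphism fixing $0$, so I may assume $\phi(0)=0$, $\phi(1)=1$.

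Next I would identify each $T_x(\epsilon_x)=\{(x,y)\in S^3:|x|=\epsilon_x\}$ with $T^2=S^1\times S^1$ by sending $(\epsilon_x e^{i\alpha},\sqrt{1-\epsilon_x^2}\,e^{i\beta})$ to $(e^{i\alpha},e^{i\beta})$. Under this identification the parametrisation $t\mapsto(ae^{i\lambda t},be^{it})$ of the leaves of $\mathcal{F}_\lambda\cap S^3$ recalled at the start of Section~\ref{irrational-ssec} becomes exactly the Kronecker flow line $t\cdot_\lambda(e^{i\alpha},e^{i\beta})$, so the trace of $\mathcal{F}_\lambda\cap S^3$ on $T_x(\epsilon_x)$ is $F_\lambda$ and that of $\mathcal{F}_\mu\cap S^3$ is $F_\mu$. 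Thus $\Phi_{|T_x(\epsilon_x)}\colon T_x(\epsilon_x)\to T_x(\phi(\epsilon_x))$ is a topological equivalence of $F_\lambda$ with $F_\mu$ whose homotopy type in the mapping class group $GL(2,\mathbb{Z})$ of $T^2$ does not depend on $\epsilon_x\in(0,1)$, by continuity and connectedness. Proposition~\ref{map-class-prop} forces this homotopy type to be $\left( \begin{array}{cc} \pm 1 & 0 \\ 0 & \pm 1 \end{array} \right)$ or $\left( \begin{array}{cc} 0 & \pm 1 \\ \pm 1 & 0 \end{array} \right)$, and Theorem~\ref{Kron-hom-thm} then gives $\mu=\frac{a\lambda+b}{c\lambda+d}$ for the relevant entries. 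Of the eight matrices in question, the four with a negative off-diagonal contribution produce $\mu=-\lambda$ or $\mu=-1/\lambda$ and are impossible since $\lambda,\mu>0$; the remaining four yield $\mu=\lambda$ (for $\pm\mathrm{id}$) or $\mu=1/\lambda$ (for the two anti-diagonal involutions), which is the claim.

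Since every ingredient is already in place, I do not expect a genuine obstacle. The only points that need care are the bookkeeping that makes ``the homotopy type of $\Phi_{|T_x(\epsilon_x)}$'' a well-defined, $\epsilon_x$-independent element of $GL(2,\mathbb{Z})$ — compatible choices of the identifications $T_x(\epsilon_x)\cong T^2$ together with the connectedness of $(0,1)$ — and the dichotomy according to whether $\Phi$ preserves or interchanges the two closed leaves, which Proposition~\ref{map-class-prop} has already been set up to absorb via the auxiliary swap $\Psi$.
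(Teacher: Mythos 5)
Your proposal is correct and follows essentially the same route as the paper: reduce to the intersection foliations via the Reconstruction Theorem, observe that an equivalence must send the invariant tori $T_x(\epsilon_x)$ to tori, apply Proposition~\ref{map-class-prop} to pin down the isotopy class of the restriction, and then invoke Theorem~\ref{Kron-hom-thm} on the Kronecker traces; your explicit check of the eight matrices (discarding the sign combinations that would yield negative $\mu$) and the preliminary normalization $\phi(0)=0$ via the coordinate swap $\Psi$ are just making transparent what the paper's Proposition~\ref{map-class-prop} already absorbs.
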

\begin{proof}
By the Reconstruction Theorem~\ref{topeq-intersec-fol-thm} it is enough to show the statement for the intersection foliations $\mathcal{F}_\lambda \cap S^3$ and $\mathcal{F}_\mu \cap S^3$. 

\noindent Exchanging the coordinates yields a topological equivalence $\Phi$ of $\mathcal{F}_\lambda \cap S^3$ with $\mathcal{F}_{\frac{1}{\lambda}} \cap S^3$. On the other hand, let $\Phi$ be a topological equivalence of $\mathcal{F}_\lambda \cap S^3$ with $\mathcal{F}_\mu \cap S^3$. As above, for $0 < \epsilon_x < 1$ the restriction $\Phi_{|T_x(\epsilon_x)}$ maps the torus $T_x(\epsilon_x)$ to another torus $T_x(\epsilon_x^\prime)$ and induces a topological equivalence of the Kronecker foliations $F_\lambda = \mathcal{F}_{\lambda|T_x(\epsilon_x)}$ and $F_\mu = \mathcal{F}_{\mu|T_x(\epsilon_x^\prime)}$.
Prop.~\ref{map-class-prop} shows that $\Phi_{|T_x(\epsilon_x)}$ must be of type $\left( \begin{array}{cc} \pm 1 & 0 \\ 0 & \pm 1 \end{array} \right)$ or $\left( \begin{array}{cc} 0 & \pm 1 \\ \pm 1 & 0 \end{array} \right)$ in the mapping class group of a $2$-dimensional torus. Then Thm.~\ref{Kron-hom-thm} implies that $\lambda = \mu$ or $\lambda = \frac{1}{\mu}$.
\end{proof}

\section{Topological equivalence classes in dimension $2$} \label{class-dim2-sec}

\noindent In each of the sections~\ref{nonreal-sec}, \ref{resonant-sec}, \ref{rational-ssec} and \ref{irrational-ssec} we identified the topological equivalence classes of plane holomorphic foliation germs represented by vector fields of a certain type, and the list in Rem.~\ref{dim2-fol-rem} shows that every plane holomorphic foliation germ is of one of these types. Consequently, the classification is completed by the following statement:
\begin{thm}
The topological equivalence classes determined in sections~\ref{nonreal-sec}, \ref{resonant-sec}, \ref{rational-ssec} and \ref{irrational-ssec} are pairwise distinct. 
\end{thm}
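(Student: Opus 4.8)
The plan is to reduce, by the Reconstruction Theorem~\ref{topeq-intersec-fol-thm}, to proving that for germs belonging to two different of the four families the associated real intersection foliations on $S^3$ are not topologically equivalent. We separate the families by \emph{topological invariants of the foliated sphere} $(S^3,\mathcal{F}\cap S^3)$, that is, by properties preserved under every homeomorphism of $S^3$ carrying leaves of $\mathcal{F}\cap S^3$ onto leaves of $\mathcal{G}\cap S^3$. Within each section the equivalence classes have already been shown distinct, so only the cross-section statements remain.

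The coarsest such invariant is the closed subset $Z(\mathcal{F})\subseteq S^3$ which is the union of all closed (compact) leaves of $\mathcal{F}\cap S^3$, together with its homeomorphism type and in particular its number of connected components; a topological equivalence maps closed leaves bijectively onto closed leaves, hence $Z(\mathcal{F})$ homeomorphically onto $Z(\mathcal{G})$. By Prop.~\ref{nonres-lin-leaf-prop} every leaf of a germ from Section~\ref{rational-ssec} is closed, so $Z=S^3$; by Prop.~\ref{leaf-res-prop} the only closed leaf of $\mathcal{F}_m$ is $\{y=0\}\cap S^3$, so $Z$ is a single circle; for a germ from Section~\ref{nonreal-sec} the coordinate circles $\{x=0\}\cap S^3$ and $\{y=0\}\cap S^3$ are closed leaves, while by Lem.~\ref{torus-transv-lem} every other leaf meets each torus $T^x_{\epsilon_x}$ exactly once and is therefore non-compact, so $Z$ is a two-component link; and by Lem.~\ref{irrat-leaf-lem} the same holds for a germ from Section~\ref{irrational-ssec}. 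Thus $Z$ already distinguishes Section~\ref{rational-ssec} (where $Z=S^3$) and Section~\ref{resonant-sec} (where $Z$ has one component) from one another and from the two remaining families, so the only pair left is Section~\ref{nonreal-sec} versus Section~\ref{irrational-ssec}.

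The remaining, and genuinely delicate, case is to distinguish the non-real family of Section~\ref{nonreal-sec} from the irrational family of Section~\ref{irrational-ssec}, for which $Z$ is a Hopf link in both cases (and even the complementary region $S^3\setminus Z\cong T^2\times\mathbb{R}$ is the same). Here I would use the dynamical invariant given by the topological type of $\overline{L}$, the closure of a non-closed leaf $L$; more precisely, the property that \emph{there is a non-closed leaf whose closure contains no closed leaf}, which is manifestly preserved by topological equivalences. In the irrational case Lem.~\ref{irrat-leaf-lem} gives $\overline{L}=T_x(\epsilon_x)$ for some $0<\epsilon_x<1$, a $2$-torus disjoint from the two coordinate circles, so the property holds. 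In the non-real case it fails for every non-closed leaf: along such an $L$ the modulus $|x|$ is strictly monotone (Lem.~\ref{torus-transv-lem}) and takes every value in $(0,1)$, so at its two ends $|x|$ tends to $0$ and to $1$; hence the $\alpha$- and $\omega$-limit sets of $L$ are nonempty compact invariant subsets of $\{x=0\}\cap S^3$ and $\{y=0\}\cap S^3$ respectively, and since each of these circles is a single periodic orbit, hence a minimal set, these limit sets are the entire circles, so $\overline{L}$ contains both closed leaves. This contradiction completes the separation; note that the same argument also shows that $\overline{L}$ is not a surface in the non-real case, which is a second way to see the distinction.

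The main obstacle is exactly this last step: the straightforward invariants — the homeomorphism type of the set of closed leaves, or of its complement — do not see the difference between Sections~\ref{nonreal-sec} and~\ref{irrational-ssec}, so one must use the way non-closed leaves accumulate on the closed ones. The rest is bookkeeping: checking that each invariant used is genuinely preserved by leaf-preserving homeomorphisms of $S^3$ (closed leaves to closed leaves, closures to closures, limit sets to limit sets), and reading off the structure of closed leaves and of leaf closures in each of the four families from the propositions already established.
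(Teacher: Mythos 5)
Your proof is correct and takes essentially the same approach as the paper: both distinguish the four families by the number of closed leaves of the intersection foliation (one, two, or all leaves closed) and, to separate the nonreal from the irrational case, by the topological structure of the closure of a non-closed leaf (a torus disjoint from the closed leaves versus a spiral accumulating on both coordinate circles). You are in fact slightly more careful than the paper in the nonreal case, supplying an explicit limit-set argument showing that every non-closed leaf accumulates on both $\{x=0\}\cap S^3$ and $\{y=0\}\cap S^3$, whereas the paper simply points to Section~\ref{nonreal-sec}, which does not state this explicitly.
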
 
\begin{proof}
If the eigenvalues of the linear part of the representing vector field are $\mathbb{R}$-linearly independent then there exists two closed leaves in the intersection foliation, and the closure of any other leaf of the intersection foliation consists of the leaf and these two closed leaves -- see the results of Section~\ref{nonreal-sec}. If the eigenvalues are $\mathbb{R}$-linearly dependent amd have resonances then there is only one closed leaf in the intersection foliation -- see the results of Section~\ref{resonant-sec}. If the eigenvalues are $\mathbb{Q}$-linearly dependent but the vector field is non-resonant then every leaf in the intersection foliation is closed -- see the results of Section~\ref{rational-ssec}. Finally, if the eigenvalues are $\mathbb{R}$-linearly dependent but $\mathbb{Q}$-linearly independent then all leaves in the intersection foliation besides the two closed leaves have as closure a torus -- see the results of Section~\ref{irrational-ssec}. 

\noindent Thus, in each of the four cases, there exist leaves of the intersection foliation with topological properties not occuring in the other cases. Hence the Reconstruction Theorem~\ref{topeq-intersec-fol-thm} shows the theorem.
\end{proof}

\section{Topological equivalence classes in dimension $\geq 3$} \label{higher-dim-sec}

\noindent Guckenheimer's Stability Theorem generalizes Thm.~\ref{R-LI-thm} to arbitrary dimensions:
\begin{thm}[{\cite{Guc72}}] \label{Guck-thm}
Let $\sum_{i=1}^n \lambda_i z_i \frac{\partial}{\partial z_i}$ and $\sum_{i=1}^n \mu_i z_i \frac{\partial}{\partial z_i}$ represent two holomorphic foliation germs with an isolated singularity in $0 \in \mathbb{C}^n$ such that $\lambda_1, \ldots, \lambda_n$ resp. $\mu_1, \ldots, \mu_n$ are in the Poincar\'e domain and pairwise $\mathbb{R}$-linearly independent. Then $\mathcal{F}_1$ and $\mathcal{F}_2$ are topologically equivalent. 
\end{thm}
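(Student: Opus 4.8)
The plan is to deduce the theorem from the Reconstruction Theorem~\ref{topeq-intersec-fol-thm}, so that it suffices to produce a topological equivalence $\Phi\colon S^{2n-1}\to S^{2n-1}$ of the real intersection foliations $\mathcal F_1\cap S^{2n-1}$ and $\mathcal F_2\cap S^{2n-1}$, and to build such a $\Phi$ by induction on $n$, the base case $n=2$ being Theorem~\ref{R-LI-thm}. As the $n$-dimensional analogue of Lemma~\ref{S1xS1-inv-lem} one first notes that both intersection foliations are invariant under the torus action $T^n=(S^1)^n$ on $S^{2n-1}$ given by $(z_1,\dots,z_n)\mapsto(z_1e^{it_1},\dots,z_ne^{it_n})$, since the pushforward of $\sum_i\lambda_iz_i\frac{\partial}{\partial z_i}$ under $z_i\mapsto z_ie^{it_i}$ is the same vector field. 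One therefore works with the moment map $\pi\colon S^{2n-1}\to\Delta^{n-1}$, $\pi(z)=(|z_1|^2,\dots,|z_n|^2)$, whose fibre over an interior point is a copy of $T^n$ and whose fibre over the closed face $\Delta_I$ ($I\subseteq\{1,\dots,n\}$) is the coordinate subsphere $S^{2|I|-1}$, carrying the intersection foliation of the germ with eigenvalues $(\lambda_j)_{j\in I}$ --- again of pairwise $\mathbb R$-linearly independent Poincar\'e type, which is what feeds the induction.

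The second step is the structural analysis of the leaves, generalising Lemmas~\ref{torus-transv-lem} and~\ref{disk-transv-lem}. A leaf of $\mathcal F_i$ through $(a_1,\dots,a_n)$ is the image of $t\mapsto(a_1e^{\lambda_1t},\dots,a_ne^{\lambda_nt})$, $t\in\mathbb C$, and its intersection with $S^{2n-1}$ is cut out in the $t$-plane by $g(t):=\sum_j|a_j|^2e^{2\mathrm{Re}(\lambda_jt)}=1$. Here $g$ is convex, and since $0\notin\mathrm{conv}\{\lambda_j\}$ the region $\{g\le1\}$ is unbounded with recession cone $C_I=\{w:\mathrm{Re}(\lambda_jw)\le0,\ j\in I\}$, where $I$ indexes the nonzero $a_j$; pairwise $\mathbb R$-linear independence forces $C_I$ to be a pointed two-dimensional wedge each of whose two extreme rays carries a single active constraint, namely the eigenvalues of smallest and largest argument among $(\lambda_j)_{j\in I}$ (the eigenvalues lie in a common open half-plane, so the ordering by argument makes sense). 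From this one reads off: the only closed leaves of $\mathcal F_i\cap S^{2n-1}$ are the $n$ coordinate circles $\gamma_j=\{|z_j|=1,\ z_k=0\ (k\ne j)\}$; every other leaf is homeomorphic to $\mathbb R$ and spirals into $\gamma_{\max I}$ in forward and out of $\gamma_{\min I}$ in backward time, crossing the relevant ``coordinate tori'' and ``coordinate disks'' transversally and with monotone moment map. Moreover the holonomy of $\gamma_j$ is conjugate to the linear map $\mathrm{diag}(e^{2\pi i\lambda_k/\lambda_j})_{k\ne j}$, whose multipliers avoid the unit circle by $\mathbb R$-linear independence, so it is topologically linearisable with unstable dimension equal to the number of eigenvalues preceding $\lambda_j$ in the argument ordering. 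Consequently two things turn out to be rigid, independent of the actual eigenvalue values: the multiset of these indices is $\{0,1,\dots,n-1\}$, and the combinatorial pattern of the resulting ``round-handle'' (Morse--Smale-type) decomposition --- which circle is joined to which through which stratum --- depends only on the argument ordering.

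Using this I would build $\Phi$ as follows. By the rigidity just described we may, after a coordinate permutation, assume the argument orderings of $(\lambda_i)$ and $(\mu_i)$ agree, so the local pictures match. Construct $\Phi$ by induction over the skeleton of $\Delta^{n-1}$: over a neighbourhood of each vertex $e_j$ the foliation is the suspension of a hyperbolic linear map whose stable/unstable dimensions are determined by the index of $\gamma_j$, so $\mathcal F_1$ and $\mathcal F_2$ are foliated-homeomorphic there, and one standardises this. Given $\Phi$ over a neighbourhood of the $(k-1)$-skeleton, extend it over each $k$-face $\Delta_I$ with $|I|=k+1<n$: the inductive hypothesis on $n$ upgrades the equivalence already defined on the boundary of $\pi^{-1}(\Delta_I)$ to one on all of $\pi^{-1}(\Delta_I)=S^{2|I|-1}$, which one then thickens into $\pi^{-1}$ of a neighbourhood of $\Delta_I$ using that leaves meet the complementary collar transversally and monotonically. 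The final step --- extending the equivalence from a neighbourhood of $\pi^{-1}(\partial\Delta^{n-1})$ over all of $S^{2n-1}$ --- is carried out exactly as in the last part of the proof of Theorem~\ref{R-LI-thm}: every non-closed leaf of the top stratum meets the outermost coordinate tori exactly once and transversally, which yields a parametrisation of the stratum by (boundary point)$\times$(transverse coordinate), and one reparametrises the transverse coordinate so as to intertwine the two foliations while matching the boundary equivalence already constructed.

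The main obstacle is precisely this last, interior step. Even though the combinatorial type is rigid, $\Phi$ cannot be obtained by a soft deformation argument: the locus of admissible eigenvalue tuples is cut by the real hypersurfaces $\mathrm{Im}(\lambda_i\bar\lambda_j)=0$ into several components, and across these walls the foliation degenerates to the closed-leaf behaviour of Sections~\ref{resonant-sec}--\ref{irrational-ssec}, so one cannot connect two components by a path of foliations. Hence $\Phi$ really must be constructed by hand, and the delicate point is the $n$-dimensional version of the crossing Lemmas~\ref{torus-transv-lem} and~\ref{disk-transv-lem}: it has to control also the leaves that pass through the tubular neighbourhood of a circle $\gamma_j$ with $j$ strictly between $\min I$ and $\max I$ without being captured by it, so that the reparametrisations of the top stratum stay compatible with the fixed models near those circles. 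Making all of these pieces fit simultaneously is the substance of Guckenheimer's Stability Theorem.
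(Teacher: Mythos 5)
The paper does not prove this statement: Theorem~\ref{Guck-thm} is imported verbatim from Guckenheimer's paper \cite{Guc72} and used as a black box, so there is no in-paper proof for your argument to be compared against. What you have written is an attempt to reprove Guckenheimer's theorem by the sphere-intersection method advocated in this paper, and it should be judged on its own terms.

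On those terms, the structural analysis you give is correct. The $T^n$-invariance and the induced moment-map picture, the convexity of $g(t)=\sum_j|a_j|^2e^{2\mathrm{Re}(\lambda_j t)}$, the identification of the recession cone $C_I$ as a pointed wedge whose two extreme rays are governed by the eigenvalues of extreme argument in $I$, the fact that the only closed leaves are the coordinate circles $\gamma_j$, and the computation that the holonomy of $\gamma_j$ is $\mathrm{diag}(e^{2\pi i\lambda_k/\lambda_j})_{k\ne j}$ with unstable dimension equal to the number of eigenvalues preceding $\lambda_j$ in the argument ordering are all right and do extract the correct discrete invariants. But, as you yourself concede in the last paragraph, you have not actually built $\Phi$: the inductive extension over the skeleton of $\Delta^{n-1}$ is only sketched, and the interior gluing step --- where the top-stratum reparametrisations must be made compatible simultaneously with the local models at \emph{all} circles $\gamma_j$, not just the two ``end'' circles a given leaf connects --- is asserted, not carried out. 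Your own observation that the admissible eigenvalue locus is disconnected by the walls $\mathrm{Im}(\lambda_i\bar\lambda_j)=0$ shows why this cannot be waved away by a deformation argument. So what you have is a plausible program, not a proof; the substance of the theorem is exactly in the piece you leave open.

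It is also worth saying that Guckenheimer's own argument in \cite{Guc72} does not go through the trace foliation on $S^{2n-1}$ at all: as the title of that paper indicates, it is a Hartman--Grobman-type linearisation theorem for the real-time flow of the complex vector field, after which two hyperbolic linear models of the same type are conjugated directly. So even if you completed the missing gluing step, you would be giving a genuinely different proof, and one that (in dimension $\ge 3$) does not currently appear in the literature --- which is presumably why the present paper cites Guckenheimer rather than reproving the result by its own method.
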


\noindent Guckenheimer also showed that $\mathcal{F}_1$ and $\mathcal{F}_2$ are topologically equivalent if, under the same assumptions on the $\lambda_i$, the vector field $\theta_2$ representing $\mathcal{F}_2$ is obtained from $\sum_{i=1}^n \lambda_i z_i \frac{\partial}{\partial z_i}$ representing $\mathcal{F}_1$ by a sufficiently small holomorphic perturbation. This implies the following classification result:
\begin{prop}
Let $\mathcal{F}_1$ and $\mathcal{F}_2$ be two holomorphic foliation germs of rank $1$ with an isolated singularity in $0 \in \mathbb{C}^n$ represented by $[U_1, \theta_1]$ and $[U_2, \theta_2]$ such that the eigenvalues of the linear parts of the vector fields $\theta_1$ resp. $\theta_2$ are in the Poincar\'e domain and pairwise $\mathbb{R}$-linearly independent. Then $\mathcal{F}_1$ and $\mathcal{F}_2$ are topologically equivalent.
\end{prop}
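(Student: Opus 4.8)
The plan is to reduce the general statement to Guckenheimer's Stability Theorem~\ref{Guck-thm} by exploiting the Poincar\'e--Dulac normal form together with the perturbation part of Guckenheimer's result quoted just above. First I would observe that, by the classical theorems of Poincar\'e and Poincar\'e--Dulac, each $\mathcal{F}_i$ is holomorphically equivalent — hence a fortiori topologically equivalent — to a germ satisfying conditions (i), (ii), (iii) of Section~\ref{prelim-sec}. Since the eigenvalues of $\theta_i$ are pairwise $\mathbb{R}$-linearly independent, \emph{every} resonance relation $\lambda_j = \sum_k m_k \lambda_k$ with all $m_k \in \mathbb{N}_0$ forces $m_k = 0$ for $k \neq j$ and $m_j = 1$, i.e.\ only the trivial resonance occurs (here one also uses that $\mathbb{R}$-linear independence rules out equal eigenvalues, so no Jordan blocks of size $\geq 2$ appear and condition (ii) actually yields a diagonal linear part). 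Therefore the normalized representative of $\mathcal{F}_i$ is \emph{exactly} the linear diagonal vector field $\sum_{k=1}^n \lambda^{(i)}_k z_k \frac{\partial}{\partial z_k}$, with no higher-order terms at all.

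Once both germs are put in this linear diagonal form, Theorem~\ref{Guck-thm} applies directly: the eigenvalue tuples $(\lambda^{(1)}_k)$ and $(\lambda^{(2)}_k)$ are in the Poincar\'e domain and pairwise $\mathbb{R}$-linearly independent by hypothesis, so the two linear foliation germs $\sum \lambda^{(1)}_k z_k \frac{\partial}{\partial z_k}$ and $\sum \lambda^{(2)}_k z_k \frac{\partial}{\partial z_k}$ are topologically equivalent. Composing this with the holomorphic equivalences from the first step gives a topological equivalence of $\mathcal{F}_1$ and $\mathcal{F}_2$, as $\mathbb{R}$-linear independence of the eigenvalues is a condition on the linear part alone and hence invariant under the normalizations used.

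I expect the only genuine subtlety — rather than an obstacle — to be the bookkeeping in the first step: one must be careful that the rescalings in Rem.~\ref{PD-est-rem} used to ``normalize'' do not reintroduce anything, but since there are no superdiagonal entries (no repeated eigenvalues) and no non-trivial resonant monomials, the normalized form is literally the linear one, and nothing needs to be estimated. The perturbation statement of Guckenheimer quoted above is in fact not needed for this argument; it would only be required if one wanted to prove stability within a neighborhood of a fixed linear germ, whereas here the Poincar\'e--Dulac reduction already eliminates all non-linear terms outright. Thus the proof is: reduce to normal form $\Rightarrow$ observe $\mathbb{R}$-linear independence kills all non-trivial resonances $\Rightarrow$ apply Theorem~\ref{Guck-thm}.
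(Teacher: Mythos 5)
You claim that pairwise $\mathbb{R}$-linear independence forces every resonance to be trivial, so that the Poincar\'e--Dulac normal form is literally linear. That is true in dimension $n=2$, where a nontrivial resonance $\lambda_i = m_j\lambda_j$ with $j\neq i$ would directly contradict independence, but it fails for $n\geq 3$. For instance take $\lambda_1 = 1$, $\lambda_2 = i$, $\lambda_3 = 2+3i$: these lie in the Poincar\'e domain (the triangle with vertices $1$, $i$, $2+3i$ misses the origin) and are pairwise $\mathbb{R}$-linearly independent, yet they satisfy the nontrivial resonance $\lambda_3 = 2\lambda_1 + 3\lambda_2$, so the monomial $z_1^2 z_2^3$ can legitimately appear in the $\tfrac{\partial}{\partial z_3}$-component of the normal form. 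Pairwise $\mathbb{R}$-linear independence only rules out resonances supported on a \emph{single} $\lambda_j$; it says nothing about integer combinations of two or more of them.

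Consequently the reduction to a linear vector field is not automatic, and the perturbation half of Guckenheimer's result --- which you explicitly discard as unnecessary --- is exactly what the paper uses to close this gap. After Poincar\'e--Dulac the nonlinear part consists of finitely many resonant monomials; ordering the eigenvalues by increasing real part forces every nontrivial resonance $\lambda_i = \sum_{j<i} m_j\lambda_j$ to involve only lower indices and to have $\sum_j m_j \geq 2$, and a coordinate rescaling $z_j \mapsto \epsilon_j z_j$ then shrinks all resonant coefficients while leaving the linear part fixed. Guckenheimer's perturbation stability shows the resulting germ is topologically equivalent to its linear part, and only then does Theorem~\ref{Guck-thm} for two linear germs apply. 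Your argument as written proves the $n=2$ case (which is already Thm.~\ref{R-LI-thm}), but for $n\geq 3$ the step ``normalized form is exactly linear'' is false and the rescaling-plus-perturbation argument cannot be omitted.
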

\begin{proof}
Assume that $\theta_1 = \sum_{i=1}^n f_i(z) \frac{\partial}{\partial z_i}$ and $\theta_2 = \sum_{i=1}^n g_i(z) \frac{\partial}{\partial z_i}$. As discussed in Section~\ref{prelim-sec} we can assume that the non-linear terms of the power series $f_i(z)$ and $g_i(z)$ consist of resonant monomials $z_1^{m_1} \cdots z_n^{m_n}$ wrt the eigenvalues $\lambda_1, \ldots, \lambda_n$ of the linear part of $\theta_1$ resp.\ $z_1^{n_1} \cdots z_n^{n_n}$ wrt the eigenvalues $\mu_1, \ldots, \mu_n$ of the linear part of $\theta_n$, that is, the $\lambda_1, \ldots, \lambda_n$ resp. $\mu_1, \ldots, \mu_n$ satisfy the resonance $\lambda_i = \sum_{j=1}^n m_j \lambda_j$ resp.\ $\mu_i = \sum_{j=1}^n n_j \mu_j$ for some integers $m_j, n_j \geq 0$. Since $\lambda_1, \ldots, \lambda_n$ resp.\ $\mu_1, \ldots, \mu_n$ are in the Poincar\'e domain there are only finitely many of these resonant monomials, hence $f_i(z)$ and $g_i(z)$ are polynomials.

\noindent Possibly after a holomorphic coordinate change we can furthermore assume that the real parts of all the $\lambda_i$ and $\mu_i$ are positive and that
\[ 0 < \mathrm{Re} \lambda_1 < \cdots < \mathrm{Re} \lambda_n\ \mathrm{resp.\ } 
   0 < \mathrm{Re} \mu_1 < \cdots < \mathrm{Re} \mu_n.  \]
Thus, resonances $\lambda_i = \sum_{j=1}^n m_j \lambda_j$ resp.\ $\mu_i = \sum_{j=1}^n n_j \mu_j$ always satisfy $m_j = n_j = 0$ for $j \geq i$. Consequently, rescaling the $i$th coordinate $z_i$ by a real factor $\epsilon_i$ such that $0 < \epsilon_1 \ll \epsilon_2 \ll \cdots \ll \epsilon_n$ changes the vector fields $\theta_1, \theta_2$ to vector fields with non-linear parts arbitarily close to $0$.

\noindent So Guckenheimer's Stability Theorem implies that $\mathcal{F}_1$ resp.\ $\mathcal{F}_2$ are topologically equivalent to the foliations represented by the linear parts $\sum_{i=1}^n \lambda_i z_i \frac{\partial}{\partial z_i}$ resp. $\sum_{i=1}^n \mu_i z_i \frac{\partial}{\partial z_i}$ of $\theta_1$ resp.\ $\theta_2$, and these foliations are topologically equivalent by Thm.~\ref{Guck-thm}. 
\end{proof}

\noindent Under the assumptions of the proposition the appearence of resonant monomials involving only $\mathbb{R}$-linearly independent eigenvalues does not influence the topological equivalence class. So for more general situations we introduce the following notion:
\begin{Def}
Let $(\lambda_1, \ldots, \lambda_n) \in \mathbb{C}^n$ be a set of complex numbers in the Poincar\'e domain. A resonance $\lambda_i = \sum_{j=1}^n m_j \lambda_j$ is called inessential if not all the $\lambda_j \in \mathbb{C}$ with $m_j \neq 0$ lie on the same real ray starting in the origin. Otherwise the resonance is called essential.
\end{Def} 

\noindent The $2$-dimensional classification in Sections~\ref{nonreal-sec} - \ref{class-dim2-sec} shows that $\mathbb{R}$-linear (in)dependence of the two eigenvalues of the linear part of a representing vector field distinguishes the topological equivalence class of holomorphic foliation germs of rank $1$ with an isolated singularity in $0 \in \mathbb{C}^2$ of Poincar\'e type. In higher dimension we extend this dichotomy to the following invariant:
\begin{Def}
The ray configuration of a tuple $(\lambda_1, \ldots, \lambda_n) \in \mathbb{C}^n$ is the ordered partition of this set into subsets consisting of those $\lambda_i \in \mathbb{C}$ lying on the same real ray starting in the origin, and the subsets are ordered by increasing angle of this ray with the positive real axis.

\noindent Two ray configurations are called equivalent if the sizes of the partition subsets, in the order of the partition, are equal, or become equal after reversing the order of one of the partitions. 
\end{Def}

\noindent Finally, the $2$-dimensional classification shows that topologically equivalent plane holomorphic foliation germs of rank $1$ with an isolated singularity in $0 \in \mathbb{C}^2$ of Poincar\'e type having equivalent ray configurations are also holomorphically equivalent. 

\noindent Combining all these observations we predict the following behaviour of such foliation germs in arbitrary dimensions:
\begin{conj}
Two holomorphic foliation germs of rank $1$ with an isolated singularity in $0 \in \mathbb{C}^n$ of Poincar\'e type are topologically equivalent if and only if the following two conditions are satisfied:
\begin{itemize}
\item[(1)] The ray configurations of the tuples of eigenvalues of the linear part of a vector field representing the foliation germs are equivalent.
\item[(2)] For every two corresponding partition subsets $\{i_1, \ldots, i_k\}$, $\{j_1, \ldots, j_k\} \subset \{1, \ldots, n\}$ of the two ray configurations, the restrictions of the two foliation germs to the linear subspaces
\[ L_1 := \left\{ z_l = 0: l \neq i_1, \ldots, i_k \right\}, 
    L_2 := \left\{ z_m = 0: m \neq j_1, \ldots, j_k \right\} \subset \mathbb{C}^n \] 
are holomorphically equivalent.
\end{itemize}
\end{conj}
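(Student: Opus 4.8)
The plan is to work throughout with the real intersection foliations on $S^{2n-1}$ — legitimate by the Reconstruction Theorem~\ref{topeq-intersec-fol-thm} — and to prove the two implications separately after a common normalization step.

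\emph{Normalization (block-diagonalization).} Starting from a Poincar\'e--Dulac normal form, I would first show that every germ of Poincar\'e type is holomorphically equivalent to one whose vector field $\theta = \sum_i f_i(z) \frac{\partial}{\partial z_i}$ is \emph{block-diagonal} with respect to the ray configuration: if $B_1,\ldots,B_s$ are the partition blocks, then $f_i$ depends only on the $z_j$ with $j$ in the same block as $i$, so $\theta = \theta_1\oplus\cdots\oplus\theta_s$ with each $\theta_r$ a Poincar\'e germ on $L_r=\{z_j=0:j\notin B_r\}$ all of whose eigenvalues lie on one ray. The mechanism is that a resonance $\lambda_i=\sum_j m_j\lambda_j$ whose monomial $z^m$ in $f_i$ mixes two blocks is exactly an \emph{inessential} resonance, and — choosing a supporting direction $e^{i\phi_0}$ for the spectrum and ordering the coordinates by increasing $\mathrm{Re}(e^{-i\phi_0}\lambda_j)$ — such a monomial can only involve earlier coordinates, so the rescaling argument of the Proposition above (with $0<\epsilon_1\ll\cdots\ll\epsilon_n$) sends its coefficient to $0$, while the essential resonant monomials, which by the sign constraint couple only coordinates on one ray, are untouched up to renormalizing each block. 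This isolates all the genuine analytic content into the individual blocks $\theta_r$.

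\emph{The "if" direction.} Given $\mathcal{F},\mathcal{G}$ with equivalent ray configurations and holomorphically equivalent corresponding block germs, I would bring both to block-diagonal form and combine two lemmas. \emph{Lemma A:} if two block-diagonal germs have the \emph{same} rays and blockwise holomorphically equivalent blocks, the block biholomorphisms assemble coordinate-wise into a global biholomorphism, giving holomorphic, hence topological, equivalence. \emph{Lemma B (a Guckenheimer-type stability statement):} a block-diagonal germ is topologically equivalent to any block-diagonal germ obtained from it by moving the rays to another configuration carrying the same ordered sequence of block sizes (or its reversal, realized by a coordinate permutation), without changing the blocks. Lemma B genuinely extends Theorem~\ref{Guck-thm}, since a block of size $>1$ has $\mathbb{R}$-linearly dependent eigenvalues; I would prove it by the transversality method of Section~\ref{nonreal-sec}: decompose $S^{2n-1}$ by the partial norms $|z|_{B_r}$, verify as in Lemmas~\ref{torus-transv-lem} and~\ref{disk-transv-lem} that the resulting intermediate loci meet the intersection foliation transversally — this uses precisely that distinct rays are $\mathbb{R}$-linearly independent — and reparametrize. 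With A and B in hand, use B to move $\mathcal{F}$ and $\mathcal{G}$ onto a common ray configuration, matching blocks as permitted by equivalence of the ray configurations, and then apply A.

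\emph{The "only if" direction, and the main obstacle.} Here one must show both conditions are topological invariants of the intersection foliation. For~(1): the subspheres $L_r\cap S^{2n-1}$ are unions of leaves, and one recovers the number and the ordered multiset of block sizes (up to reversal) from the topology — the minimal closed invariant subsets and their homeomorphism types $S^{2n_r-1}$, the accumulation behaviour of the remaining leaves, and linking/winding data between these subsets — generalizing the explicit reasoning behind the dimension-$2$ classification in Section~\ref{class-dim2-sec}. Condition~(2) is the crux and the reason the statement is only conjectural: one must prove that a topological equivalence between the intersection foliations of two \emph{irreducible} Poincar\'e germs (all eigenvalues on one ray, hence real up to a rotation, a higher-dimensional mixture of the situations in Sections~\ref{resonant-sec},~\ref{rational-ssec},~\ref{irrational-ssec}) forces holomorphic equivalence of the germs. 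This is a strong rigidity statement; the only route I see is to classify the topological types of such irreducible intersection foliations by the holonomy along their closed leaves together with finer invariants of how non-closed leaves spiral, exactly as was done by hand in dimensions $1$ and $2$, and carrying out this classification in arbitrary dimension — in particular for spectra exhibiting both resonances and $\mathbb{Q}$-linear independence — is the hard open problem at the heart of the conjecture.
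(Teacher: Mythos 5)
The statement is labelled a \emph{conjecture}; the paper offers no proof of it, only the motivating discussion in Section~\ref{higher-dim-sec}. So the sensible yardstick is whether your sketch is internally sound, and there is one concrete flaw near the start that you should repair.

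Your ``Normalization'' step asserts that every Poincar\'e germ is \emph{holomorphically} equivalent to a block-diagonal germ, on the grounds that the rescaling $z_i\mapsto\epsilon_iz_i$ with $0<\epsilon_1\ll\cdots\ll\epsilon_n$ ``sends the coefficient of an inessential resonant monomial to $0$.'' That is not correct: a single such rescaling is a biholomorphism, so the conjugated germ remains holomorphically equivalent to the original one, inessential resonant terms included — their coefficients become small, not zero. Inessential resonances are still resonances in the Poincar\'e--Dulac sense; their monomials are formal obstructions and cannot be removed by any biholomorphism. To pass from ``inessential coefficients arbitrarily small'' to ``topologically equivalent to the block-diagonal germ'' you need exactly the kind of stability statement your Lemma~B is asking for, and Lemma~B is precisely what is conjectural here: Guckenheimer's proof uses pairwise $\mathbb{R}$-linear independence of the eigenvalues, which fails as soon as a block has size $>1$, and transversality computations in the spirit of Lemmas~\ref{torus-transv-lem} and~\ref{disk-transv-lem} only control the partial norms separating distinct rays, not the dynamics inside a block, where the resonant monomials act. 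In other words, your normalization step already presupposes the part of the conjecture that the paper's closing remark singles out as the substantive prediction — that inessential resonant monomials do not affect the topological type. Restate the block-diagonalization as a topological, open claim folded into Lemma~B, not as a holomorphic reduction. With that correction, the rest of your outline — Lemma~A, recovering the ray configuration from the closed invariant subspheres and their linking, and above all the rigidity of irreducible one-ray blocks in the ``only if'' direction — is a fair and honest account of where the open work lies.
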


\noindent In particular, the conjecture predicts in full generality that the appearence of inessential resonant monomials does not influence the topological equivalence class.


\def\cprime{$'$}

\end{document}